\newtheorem{thm}{Theorem}[section]
\newtheorem{cor}[thm]{Corollary}
\newtheorem{prop}[thm]{Proposition}
\newtheorem{claim}{Claim}[thm]
\newtheorem{lemma}[thm]{Lemma}
\newtheorem{fact}[thm]{Fact}
\newtheorem{mainthm}{Theorem}
\theoremstyle{definition}
\newtheorem{defn}[thm]{Definition}
\theoremstyle{remark}
\newtheorem{remark}[thm]{Remark}
\newcommand\s{\subseteq}
\newcommand*\axiomfont[1]{\textsf{\textup{#1}}}
\newcommand\ch{\axiomfont{CH}}
\newcommand\gch{\axiomfont{GCH}}
\renewcommand{\mid}{\mathrel{|}\allowbreak}
\renewcommand{\restriction}{\mathbin\upharpoonright}
\DeclareMathOperator{\acc}{acc}
\DeclareMathOperator{\add}{Add}
\DeclareMathOperator{\dom}{dom}
\DeclareMathOperator{\cov}{cov}
\DeclareMathOperator{\bd}{bd}
\DeclareMathOperator{\im}{Im}
\DeclareMathOperator{\h}{ht}
\DeclareMathOperator{\onto}{{\sf onto}}
\renewcommand{\H}{\operatorname{H}}
\title{Diamond on Kurepa trees}
\author{Ziemowit Kostana}
\address{Institute of Mathematics of the Czech Academy of Sciences, Czech Republic}
\email{ziemek314@gmail.com}
\author{Assaf Rinot}
\address{Department of Mathematics, Bar-Ilan University, Ramat-Gan 52900, Israel.}
\urladdr{http://www.assafrinot.com}
\author{Saharon Shelah}
\address{Einstein Institute of Mathematics, The Hebrew University of Jerusalem, Israel \and Department of Mathematics, Rutgers University, U.S.A.}
\urladdr{http://shelah.logic.at}
\subjclass[2010]{Primary 03E35}
\begin{document}
\begin{abstract} We introduce a new weak variation of diamond that is meant to only guess the branches of a Kurepa tree.
We demonstrate that this variation is considerably weaker than diamond by proving it is compatible with Martin's axiom.
We then prove that this principle is nontrivial by showing it may consistently fail.
\end{abstract}
\date{April 3, 2024}
\maketitle

\section{Introduction}
Recall that Jensen's diamond principle $\diamondsuit$ is equivalent to the assertion that there exists a sequence $\langle t_\alpha\mid\alpha<\omega_1\rangle$ such that the following two hold:
\begin{itemize}
\item for every $\alpha<\omega_1$, $t_\alpha$ is a function from $\alpha$ to $2$;
\item for every function $f:\omega_1\rightarrow2$, the set
$G(f):=\{\alpha<\omega_1\mid f\restriction\alpha=t_\alpha\}$ is stationary.
\end{itemize}

Kunen proved that $\diamondsuit$ is no stronger than $\diamondsuit^-$ asserting the existence of a sequence $\langle T_\alpha\mid\alpha<\omega_1\rangle$
such that the following two hold:
\begin{itemize}
\item for every $\alpha<\omega_1$, $T_\alpha$ is a countable family of functions from $\alpha$ to $2$;
\item for every function $f:\omega_1\rightarrow2$, the set
$G^-(f):=\{\alpha<\omega_1\mid f\restriction\alpha\in T_\alpha\}$ is stationary.
\end{itemize}

This paper is motivated by a result of the third author \cite{MR603754} who showed it is consistent that some sequence $\langle T_\alpha\mid\alpha<\omega_1\rangle$ witnesses $\diamondsuit^-$ and yet, 
no transversal $\langle t_\alpha\mid\alpha<\omega_1\rangle\in\prod_{\alpha<\omega_1}T_\alpha$ witnesses $\diamondsuit$.
Our idea here is to deal with a further weakening of $\diamondsuit$ in which we limit the collection of functions $f:\omega_1\rightarrow2$
for which $G(f)$ is required to be stationary.
Specifically, this collection will constitute the branches of a Kurepa tree.
To streamline the matter, it is convenient to focus on well-behaved Kurepa trees such as the following.

\begin{defn}\label{Def11} A \emph{binary Kurepa tree} is a collection $T$ such that:
\begin{itemize}
\item $T\s{}^{<\omega_1}2$;
\item $T$ is downward-closed, i.e., for every $t\in T$ and $\alpha<\dom(t)$, $t\restriction\alpha\in T$;
\item $T$ has countable levels, i.e.,
for every $\alpha<\omega_1$, $T_\alpha:=\{ t\in T\mid\dom(t)=\alpha\}$ is countable;
\item The set $\mathcal B(T):=\{f\in{}^{\omega_1}2\mid\forall\alpha<\omega_1\,(f\restriction\alpha\in T_\alpha)\}$ of cofinal branches through $T$
has size $\ge\aleph_2$.
\end{itemize}
\end{defn}

\begin{defn}[Diamond on Kurepa trees]\label{defdiamond}
Suppose that $T$ is a binary Kurepa tree. The axiom $\diamondsuit(T)$ asserts the existence of a transversal $\langle t_\alpha\mid\alpha <\omega_1\rangle\in\prod_{\alpha<\omega_1}T_\alpha$
such that for every $f\in\mathcal B(T)$, the set $G(f):=\{\alpha<\omega_1\mid f\restriction\alpha=t_\alpha\}$ is stationary.
\end{defn}

It is not hard to see that $\diamondsuit\implies\clubsuit\implies\clubsuit_w\implies\diamondsuit(T)$ for every binary Kurepa tree $T$. It turns out, however, that the latter is considerably weaker than the other diamond-type principles,
and in fact $\diamondsuit(T)$ is compatible with Martin's axiom.
This is a corollary to any of the following two results:
\begin{mainthm}\label{thma} It is consistent that there exists a binary Kurepa tree $T$
such that $\diamondsuit(T)$ holds and cannot be killed by a proper forcing.\footnote{A proper forcing may kill the Kurepa-ness of $T$, but then $\diamondsuit(T)$ will hold trivially.}
\end{mainthm}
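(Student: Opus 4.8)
The plan is to work under $\gch$ — most conveniently, in $L$ — and to build, by transfinite recursion of length $\omega_1$, a normal binary tree $T$ together with a transversal $\bar t=\langle t_\alpha\mid\alpha<\omega_1\rangle$, driven by a $\diamondsuit$-style guessing sequence designed to anticipate proper forcings and their names for branches of $T$. Fix (a standard consequence of $\gch$) a sequence $\langle N_\delta\mid\delta<\omega_1\rangle$ of countable elementary submodels of $H_{\omega_2}$ with $N_\delta\cap\omega_1=\delta$ such that for every $x\in H_{\omega_2}$ the set $\{\delta<\omega_1\mid x\in N_\delta\}$ is stationary. Running alongside the guessing, carry out a Jensen-style construction of a Kurepa tree, imposing the extra invariant that at each limit level $\delta$ the tree $T\restriction\delta$ has only countably many cofinal branches, all of which are continued to level $\delta$; this keeps every $T_\delta$ countable while one arranges, as usual, that $|\mathcal B(T)|\ge\aleph_2$.

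At a limit stage $\delta$ one must define $T_\delta$ and $t_\delta$. Inspect $N_\delta$: if it contains a quadruple $(\mathbb P,p,\dot f,\dot C)$ that, from the point of view of $N_\delta$, witnesses that $\mathbb P$ is proper and $p$ forces ``$\dot f\in\mathcal B(T)$, $\dot C$ a club, and $\dot C\cap G(\dot f)=\varnothing$'', then compute inside $N_\delta$ the (necessarily countable) set of branches of $T\restriction\delta$ of the form $(\dot f\restriction\delta)^{H}$ for $N_\delta$-generic $H\ni p$, continue each of them to level $\delta$, and let $t_\delta$ be the continuation of a designated one of these; otherwise set $t_\delta$ by the usual $\diamondsuit(T)$ bookkeeping, which already secures that the final $\bar t$ witnesses $\diamondsuit(T)$ in $V$. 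Because a proper forcing preserves every stationary subset of $\omega_1$, $G(f)$ stays stationary for each ground-model branch $f$; hence in any proper extension in which $\aleph_2$ is preserved — so that $T$ remains Kurepa — the transversal $\bar t$ can fail to witness $\diamondsuit(T)$ only through a branch of $T$ that the forcing has added.

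To rule that out, suppose $\mathbb Q$ is proper, $G$ is $\mathbb Q$-generic, $\aleph_2$ is preserved, and $\diamondsuit(T)$ fails in $V[G]$; then, since $\bar t$ itself must fail, there are a new branch $f$ of $T$, a club $C\in V[G]$ with $f\restriction\alpha\neq t_\alpha$ for all $\alpha\in C$, names $\dot f,\dot C$, and $p\in G$ forcing all this. After a routine reflection reducing $\mathbb Q$ to size $\le\aleph_1$, put $x=(\mathbb Q,p,\dot f,\dot C)$. The set of $\delta$ admitting a countable $N\prec H_\theta^V$ with $x\in N$, $N\cap\omega_1=\delta$, and an $(N,\mathbb Q)$-generic condition $\le p$ in $G$ contains a club of $V$, which stays a club in $V[G]$; intersecting with $C$ and with the stationary — hence, by properness, preserved — set of stages at which the guessing caught $x$, one obtains stationarily many $\delta$ with $\delta\in C$, with $N_\delta$ correct for $x$, and with an $(N_\delta,\mathbb Q)$-generic $q\le p$ in $G$. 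One then wants to conclude that at such a $\delta$ the construction set $t_\delta=f\restriction\delta$, giving $\delta\in G(f)\cap C=\varnothing$, a contradiction.

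This last step is the crux and the main obstacle. At stage $\delta$ the recursion has only the ground-model model $N_\delta$, not $G$, and in general $p$ does not decide $\dot f\restriction\delta$: the ``$\delta$-th slice'' of a name for a fresh branch is typically left open by every condition inside $N_\delta$, just as a generic real is left open by ground-model conditions, so the set of candidate values for $f\restriction\delta$ computed at stage $\delta$ may have more than one member, and a single transversal cannot match all of them. The substance of the proof is to arrange the construction — while restricting, via a preliminary reflection, to a sufficiently rich but still manageable class of scenarios — so that the value of $f\restriction\delta$ actually realized in $V[G]$ is the one designated as $t_\delta$ for stationarily many of the $\delta$ above; in the $L$ setting this can be engineered by exploiting coherence of the models $N_\delta$ together with a factorisation of $\mathbb Q$ presenting $f$ through a name whose $\delta$-slices are decided continuously along a club, so that $f\restriction\delta$ becomes recoverable inside $N_\delta$ from the generic data trapped in $N_\delta\cap G$. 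Reconciling this with the demands that every level of $T$ be countable and that $\mathcal B(T)$ have size at least $\aleph_2$ is the delicate part of the argument.
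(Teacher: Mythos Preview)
Your approach attempts to build $T$ and the transversal $\bar t$ simultaneously, anticipating potential proper forcings and their fresh branches via a $\diamondsuit$-style sequence of elementary submodels, so that $t_\delta$ coincides with the realised value $f\restriction\delta$ at stationarily many $\delta$. As you yourself acknowledge, the crux is left open: at stage $\delta$ the condition $p\in N_\delta$ typically does not decide $\dot f\restriction\delta$, and the appeal to ``coherence of the models'' and a ``factorisation of $\mathbb Q$ presenting $f$ through a name whose $\delta$-slices are decided continuously along a club'' is not an argument but a wish. A fresh branch is precisely one whose $\delta$-th initial segment is \emph{not} determined by any condition in the ground model below level $\delta$, so there is no reason the single value $t_\delta$ you committed to should match the $f\restriction\delta$ that the generic actually produces; different $(N_\delta,\mathbb Q)$-generic conditions in $G$ can yield different values of $\dot f\restriction\delta$. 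This is a genuine gap, and nothing in the sketch closes it.

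The paper sidesteps this difficulty entirely by arranging that proper forcing adds \emph{no} new branches to $T$ --- that is, $T$ is \emph{sealed} for proper forcing. Two routes are given. One works in $\mathsf L[A]$ for some $A\subseteq\omega_1$ and uses the Po\'or--Shelah Kurepa tree all of whose branches lie in $\mathsf L[A]$; absoluteness of $\mathsf L[A]$ then blocks new branches in any $\omega_1,\omega_2$-preserving extension. The other forces with Stewart's $\sigma$-closed poset $\mathbb S_\kappa$ and shows that the set of countable $X\subseteq\kappa$ with $T_{X\cap\omega_1}=\{f_\xi\restriction(X\cap\omega_1)\mid\xi\in X\}$ is stationary in $[\kappa]^{\aleph_0}$; properness preserves this stationary set, and an elementary-submodel argument then rules out any fresh branch. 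In either case $\diamondsuit(T)$ holds in the ground model (via $\diamondsuit$), and since proper forcing preserves stationary subsets of $\omega_1$ and adds no branches, the very same transversal continues to witness $\diamondsuit(T)$ after forcing. The anticipation machinery you propose is not needed, and the obstacle you identified simply does not arise.
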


\begin{mainthm}\label{thmb} If $T$ and $S$ are two binary Kurepa trees and $|\mathcal B(T)|<|\mathcal B(S)|$, then $\diamondsuit(T)$ holds.
\end{mainthm}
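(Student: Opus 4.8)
The plan is to prove the contrapositive: assuming $\neg\diamondsuit(T)$, I would show that \emph{every} binary Kurepa tree $S$ satisfies $|\mathcal B(S)|\le|\mathcal B(T)|$. After pruning both trees so that every node lies on a cofinal branch, fix a level-preserving map $c\colon S\to T$ (so $c(s)\in T_{\dom(s)}$) which on the club of levels $\delta$ with $|S_\delta|=|T_\delta|=\aleph_0$ is a bijection $c_\delta\colon S_\delta\to T_\delta$. Every branch $b\in\mathcal B(S)$ then induces a transversal $c\circ b=\langle c(b\restriction\delta)\mid\delta<\omega_1\rangle\in\prod_\delta T_\delta$, and $\neg\diamondsuit(T)$ hands us, for each such $b$, a branch $f_b\in\mathcal B(T)$ together with a club $C_b$ on which $c(b\restriction\delta)\ne f_b\restriction\delta$. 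The goal then becomes to manufacture from this data an injection of $\mathcal B(S)$ into $\mathcal B(T)\times\omega_1$ (or into $\mathcal B(T)$ times a countable set), which would force $|\mathcal B(S)|\le|\mathcal B(T)|$ since $|\mathcal B(T)|\ge\aleph_2$.

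The first thing I would try is a direct counting of the fibers $B_f:=\{b\in\mathcal B(S)\mid f_b=f\}$. Writing $\tilde f$ for the transversal $\langle c_\delta^{-1}(f\restriction\delta)\mid\delta<\omega_1\rangle$ of $S$, membership $b\in B_f$ says exactly that $b$ \emph{avoids} $\tilde f$ off a club, i.e. $\{\delta\mid b\restriction\delta=\tilde f(\delta)\}$ is nonstationary. If each $|B_f|$ were at most $|\mathcal B(T)|$ we would be done by summing over the $\le|\mathcal B(T)|$ possible values of $f$. However, bounding $|B_f|$ this way is equivalent to asserting that all but $\le|\mathcal B(T)|$ branches of $S$ meet the single transversal $\tilde f$ stationarily, and that is a $\diamondsuit(S)$-flavoured statement that need not hold. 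So the crude pigeonhole fails, and the argument has to use the tree structure of $S$ rather than mere cardinality of the fibers.

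Accordingly, the real work --- and the step I expect to be the main obstacle --- is a recursive construction of length $\omega_1$ in the spirit of the Devlin--Shelah weak diamond theorem, with the hypothesis $|\mathcal B(T)|<|\mathcal B(S)|$ playing the role that $2^{\aleph_0}<2^{\aleph_1}$ plays there, and with the countability of the levels of $S$ (the Kurepa property) compensating for the fact that $S$, unlike $2^{<\omega_1}$, has countable levels. Concretely, I would build the map $c$ and select the witnesses $f_b$ \emph{simultaneously} along a continuous $\in$-chain of elementary submodels (or against a fixed ladder system, to control the clubs $C_b$), arranging the bookkeeping so that two branches of $S$ that split at some level $\rho$ are driven to \emph{distinct} witnesses, thereby upgrading the hopeless map $b\mapsto f_b$ to an injection modulo countable data. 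The delicate points I anticipate are: (i) forcing this injectivity, since \emph{a priori} many branches of $S$ may share a single witness; (ii) managing the stationary/club requirements uniformly across the $\ge\aleph_2$ many branches; and (iii) the routine but necessary handling of the levels where $|S_\delta|\neq|T_\delta|$ and of a possibly singular value of $|\mathcal B(T)|$.
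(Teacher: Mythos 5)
Your proposal stalls exactly at the step you yourself flag as the main obstacle, and that step cannot be carried out by the mechanism you describe. The Devlin--Shelah analogy breaks down for a structural reason: in the weak diamond setting the guessed values live in the two-element set $2$, so ``evading on a club'' is the same as \emph{determination} on a club (there $g(\alpha)=1-F(f_g\restriction\alpha)$ for club-many $\alpha$), and it is precisely this determination that lets one compress an arbitrary $g:\omega_1\rightarrow 2$ into its witness plus small data. In your setting the values of the transversal $c\circ b$ range over the countably infinite level $T_\delta$, so the evasion constraint $c(b\restriction\delta)\neq f_b\restriction\delta$ excludes one candidate out of infinitely many and determines nothing about $b$. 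Concretely, for a fixed $f\in\mathcal B(T)$ the set of $b\in\mathcal B(S)$ whose transversal is evaded by $f$ consists of all $b$ meeting the $S$-transversal $\langle c_\delta^{-1}(f\restriction\delta)\mid\delta<\omega_1\rangle$ only nonstationarily --- potentially \emph{all} of $\mathcal B(S)$ --- so the fibers of $b\mapsto f_b$ need not be small however cleverly $c$ is built. Nor can bookkeeping ``drive'' splitting branches to distinct witnesses: $\neg\diamondsuit(T)$ is a bare existential statement that hands you some evading branch for each completed transversal and gives you no control whatsoever over which one; and since $|\mathcal B(S)|\ge\aleph_2$ (possibly $2^{\aleph_1}$), no recursion of length $\omega_1$ can even index the branches it would have to treat. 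So items (i)--(iii) of your plan are not delicate details but the entire missing content of the proof.

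The idea that is actually needed is of a different nature, and in the paper's argument (Proposition~\ref{prop28}) the tree $S$ enters only through its cardinality of branches, not through any map or coding between $S$ and $T$. Since a binary Kurepa tree $S$ is in particular a weak Kurepa tree, the hypothesis $|\mathcal B(T)|<|\mathcal B(S)|$ yields, for the regular cardinal $\kappa:=|\mathcal B(T)|^+$, the Ulam-type coloring principle $\onto(\{\aleph_1\},J^{\bd}[\kappa],\aleph_0)$ of \cite[Corollary~7.18]{paper53}: a map $d:\omega_1\times\kappa\rightarrow\omega$ such that every $B\in[\kappa]^\kappa$ admits $i<\omega_1$ with $d[\{i\}\times B]=\omega$. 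Dually, the functions $g_\beta:=d(\cdot,\beta)$ have the property that any fewer than $\kappa$ many functions $r:\omega_1\rightarrow\omega$ are met (i.e., $r\cap g_\beta\neq\emptyset$) by one common $g_\beta$. Partitioning $\omega_1$ into $\aleph_1$ many stationary sets and attaching to each $f\in\mathcal B(T)$ the function $r_f$ that records on each piece an index guessed stationarily often (as in Proposition~\ref{prop25}), the bound $|\mathcal B(T)|<\kappa$ produces a single $g_\beta$ meeting every $r_f$, and that $g_\beta$ assembles a $\diamondsuit(T)$-sequence. This positive route avoids coding $\mathcal B(S)$ altogether, which is also why the theorem holds more generally with $S$ only a weak Kurepa tree.
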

The main result of this paper demonstrates that this very weak variation of diamond is nevertheless nontrivial:
\begin{mainthm}\label{thmc} It is consistent that $\diamondsuit(T)$ fails for some binary Kurepa tree~$T$.
Furthermore, such a tree $T$ can be chosen to be either rigid or homogeneous.
\end{mainthm}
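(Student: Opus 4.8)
The plan is to produce such a tree by forcing. Work over a ground model $V\models\gch$, and let $\mathbb P$ be the natural forcing that adds, by countable approximations, a binary Kurepa tree $T$ — with full binary branching at successor levels — together with a list $\langle b_\xi\mid\xi<\omega_2\rangle$ of cofinal branches. A condition is a pair $p=(t_p,f_p)$ where $t_p$ is a downward-closed subtree of ${}^{<\omega_1}2$ of countable successor height, with countable levels and with $s^\frown0,s^\frown1\in t_p$ whenever $s\in t_p$ is not on the top level, $f_p$ is a countable partial function from $\omega_2$ into $t_p$, and $q\le p$ iff $t_q$ end-extends $t_p$, $\dom(f_q)\supseteq\dom(f_p)$ and $f_q(\xi)\supseteq f_p(\xi)$ for $\xi\in\dom(f_p)$; one carries, as usual, a ``promise'' bounding the width of the next level so that the generic tree really does have countable levels, and builds in density requirements making the $b_\xi$ pairwise distinct and $T$ pruned. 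Then $\mathbb P$ is $\sigma$-closed — so $\omega_1$, the reals, and \ch\ are preserved — and a $\Delta$-system argument using \gch\ (there are only $\aleph_1$ possible values of $t_p$, and the $\dom(f_p)$ are countable) shows $\mathbb P$ is $\aleph_2$-cc; hence all cardinals and cofinalities are preserved and $2^{\aleph_1}=\aleph_2$ in $V[G]$. In $V[G]$ the generic object $T$ is a binary Kurepa tree whose set of cofinal branches includes the $\aleph_2$ pairwise-distinct branches $b_\xi:=\bigcup\{f_p(\xi)\mid p\in G\}$ — consistently with Theorem~\ref{thmb}, which forces $|\mathcal B(T)|$ to be maximal whenever $\diamondsuit(T)$ fails.

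It remains to show $V[G]\models\neg\diamondsuit(T)$. Suppose otherwise; then some condition forces that a transversal $\dot{\vec t}=\langle\dot t_\alpha\mid\alpha<\omega_1\rangle$ witnesses $\diamondsuit(T)$, and by passing below it we may assume $p$ does. By $\aleph_2$-cc the name $\dot{\vec t}$ is decided by at most $\aleph_1$ maximal antichains, so there is $X\in[\omega_2]^{\aleph_1}$ with $\dom(f_p)\subseteq X$ such that every $\dot t_\alpha$ can be decided by an extension whose branch-part has domain inside $X$; in particular, such decisions never touch the $\xi$-coordinate for $\xi\notin X$. Fix $\xi^*\in\omega_2\setminus X$. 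The goal is to produce $q\le p$ and a $\mathbb P$-name $\dot C$ with $q\Vdash$ ``$\dot C$ is a club and $b_{\xi^*}\restriction\delta\ne\dot t_\delta$ for every $\delta\in\dot C$'' — equivalently, $q\Vdash$``$G(b_{\xi^*})$ is non-stationary''; since $b_{\xi^*}\in\mathcal B(T)$ this contradicts $p\Vdash$``$\dot{\vec t}$ witnesses $\diamondsuit(T)$''.

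To build $q$ and $\dot C$ I would run a recursion of length $\omega_1$ that (working below $p$, and passing through a suitable elementary submodel or internal-club bookkeeping to manufacture $\dot C$) constructs a $\le$-decreasing, at-countable-limits-continuous sequence $\langle r_\gamma\rangle$ together with an increasing continuous sequence $\langle\alpha_\gamma\rangle$ of countable ordinals, each $r_\gamma$ deciding $b_{\xi^*}\restriction\alpha_\gamma$ and $\dot t_{\alpha_\gamma}$ with $r_\gamma\Vdash b_{\xi^*}\restriction\alpha_\gamma\ne\dot t_{\alpha_\gamma}$; the ordinals $\alpha_\gamma$ become the (name for the) club $\dot C$. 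At a successor stage one first extends $r_\gamma$ inside the $X$-coordinates, holding $f(\xi^*)$ fixed — possible exactly because $\xi^*\notin X$ — so as to decide $\dot t_\beta$ at the next level $\beta$, and then advances $f(\xi^*)$ to whichever of the two successors of its current top node differs from that decided value. The main obstacle is the limit stages: there the value of $f(\xi^*)$ is forced to equal the union of the earlier choices, so the disagreement with $\dot t_{\alpha_\gamma}$ can no longer be imposed by hand, and a priori $\dot t_{\alpha_\gamma}$ could be forced to coincide with it. The resolution I would pursue is to strengthen the inductive hypothesis so that at cofinally many earlier stages there remain two legal choices of the advancing node — which is the case unless $\dot{\vec t}$ agrees with the branch being built along a tail — and to observe that in that ``coherent'' case $\diamondsuit(T)$ already fails for a cheaper reason, since a transversal that is eventually equal to a single branch of a densely-splitting Kurepa tree guesses no other branch even cofinally often. (Alternatively one may first factor $\mathbb P\cong\mathbb P_X\ast(\mathbb P/\mathbb P_X)$, with $\mathbb P_X$ the complete suborder of conditions whose branch-part is supported on $X$, note that $\dot{\vec t}$ and $T$ live in $V[G_X]$ while $\mathbb P/\mathbb P_X$ merely grafts $b_{\xi^*}$ onto the now-frozen tree, and run the fusion over $V[G_X]$ against the fixed transversal; the limit-coherence point is identical but the bookkeeping is lighter.)

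For the final clause one decorates $\mathbb P$. For homogeneity one additionally arranges that $T$ is closed under a level-transitive group of automorphisms — for instance, make each level of $T$ a subgroup of ${}^\alpha2$ under coordinatewise addition mod $2$ and let $\operatorname{Aut}(T)$ contain all translations — which is readily compatible with full successor-branching. For rigidity one interleaves the recursion with the standard bookkeeping that kills prospective automorphisms of the generic Kurepa tree, each of which would be a new cofinal function that a density argument defeats. In both cases the added requirements are easily seen to preserve $\sigma$-closure and $\aleph_2$-cc and to leave the coordinate $\xi^*$ free, so the argument of the preceding two paragraphs goes through unchanged.
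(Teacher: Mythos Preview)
Your approach has a fatal flaw: the forcing you describe is (up to cosmetic changes) Stewart's forcing $\mathbb S_{\omega_2}$ of Definition~\ref{Stewart}, and Proposition~\ref{prop47} shows that in that extension $\diamondsuit(T)$ \emph{holds}, even indestructibly under further proper forcing. The underlying reason is Baumgartner's theorem (cited in the proof of Proposition~\ref{prop47}): any $\sigma$-closed forcing adding a new subset of $\omega_1$ forces full $\diamondsuit$, and $\diamondsuit$ outright implies $\diamondsuit(T)$ for every binary Kurepa tree. So your conclusion $V[G]\models\neg\diamondsuit(T)$ is simply false for this $\mathbb P$.

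Accordingly your argument must break, and it breaks exactly at the limit stages you flagged. Your proposed resolution does not rescue it: a $\diamondsuit$-sequence in $V[G]$ is not ``eventually coherent with a single branch'', so the dichotomy you appeal to never materializes, and however you steer $b_{\xi^*}$ along a $\le$-descending chain of conditions, on a stationary set of limit heights $\alpha$ the value $\dot t_\alpha$ will be forced to equal the union you have built. The factoring variant suffers the same fate; note also that the quotient $\mathbb P/\mathbb P_X$ cannot be $\sigma$-closed (it adds branches through an $\aleph_1$-tree), and Proposition~\ref{prop47} is precisely the statement that no proper quotient can add a branch through the Stewart tree that escapes the diamond there.

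The paper's construction is entirely different. One begins with a ground-model Aronszajn tree $T$ (supplied by $\diamondsuit^+$, Fact~\ref{diamondplus}) carrying $\omega_2$ many normal Souslin subtrees $T^\xi$ whose finite products are Souslin, and performs a countable-support iteration of length $\omega_2$ of posets $\mathbb Q(T^\xi,\vec t)$. A condition in $\mathbb Q(T^\xi,\vec t)$ is a node $x\in T^\xi$ together with a finite $\in$-chain $\mathcal M$ of countable $M\prec\H_{\omega_1}$ such that $x\restriction(M\cap\omega_1)\neq\vec t(M\cap\omega_1)$ for each $M\in\mathcal M$. These side conditions are exactly what solves your limit-stage obstacle: the models generate a club in the extension on which the generic branch avoids $\vec t$ by fiat. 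Properness of $\mathbb Q(T^\xi,\vec t)$ uses that $T^\xi$ is Souslin (so a node at level $M^*\cap\omega_1$ is generic over $M^*$), and Lemma~\ref{Souslinpreservation} propagates Souslinness of the remaining subtrees through the iteration via the product hypothesis. Bookkeeping over all nice names for transversals then kills every candidate $\diamondsuit(T)$-sequence.
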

The model witnessing the preceding is obtained by a countable support iteration of proper notions of forcing for adding a branch that evades a potential diamond sequence using models as side conditions.
The said Kurepa tree $T$ will start its life in the ground model as a particular Aronszajn tree obtained from $\diamondsuit^+$.

\subsection{Organization of this paper}
In Section~\ref{sec2}, we provide some preliminaries on trees, justify our focus on Kurepa trees that are binary and compare the principle $\diamondsuit(T)$ with other standard set-theoretic hypotheses.
The proof of Theorem~\ref{thmb} will be found there.

In Section~\ref{sec3}, we study indestructible forms of $\diamondsuit(T)$. The proof of Theorem~\ref{thma} will be found there.

In Section~\ref{sec4}, we present a notion of forcing $\mathbb{Q}(T,\,\vec{t})$ to add a branch through an $\aleph_1$-tree $T$ that evades a given potential diamond sequence $\vec t=\langle t_\alpha\mid\alpha<\omega_1\rangle$.
We give a sufficient condition for $\mathbb{Q}(T,\,\vec{t})$ to be proper, and then turn to iterate it.
The proof of Theorem~\ref{thmc} will be found there.

\section{Warm up}\label{sec2}
\subsection{Abstract, Hausdorff and binary trees}
A \emph{tree} is a partially ordered set $\mathbf T=(T,<_T)$ such that, for every $x\in T$,
the cone $x_\downarrow:=\{y\in T\mid y<_T x\}$ is well-ordered by $<_T$; its order type is denoted by $\h(x)$.
For any ordinal $\alpha$, the $\alpha^{\text{th}}$-level of the tree is the collection $T_\alpha:=\{ x\in T\mid\h(x)=\alpha\}$.
The \emph{height} of the tree is the first ordinal $\alpha$ for which $T_\alpha=\emptyset$.
The tree is \emph{normal} iff for every $t\in T$ and every ordinal $\alpha$ in-between $\h(x)$ and the height of the tree,
there exists some $y\in T_\alpha$ with $x<_Ty$.
The tree is \emph{Hausdorff} iff for every limit ordinal $\alpha$ and all $x,y\in T_\alpha$,
if $x_\downarrow=y_\downarrow$, then $x=y$. In particular, a (nonempty) Hausdorff tree has a unique root.

An \emph{$\aleph_1$-tree} is a tree $\mathbf T$ of height $\omega_1$ all of whose levels are countable.
A \emph{Kurepa tree} (resp.~\emph{Aronszajn tree}) is an $\aleph_1$-tree $\mathbf T$
satisfying that the set $\mathcal B(\mathbf T)$ of all uncountable maximal chains in $\mathbf T$ has size $\ge\aleph_2$ (resp.~is empty).
Definition~\ref{defdiamond} generalizes to abstract (possibly, non-Kurepa) $\aleph_1$-trees by interpreting $f\restriction\alpha$ (for $f\in\mathcal B(\mathbf T)$ and $\alpha<\omega_1$)
as the unique element of $T_\alpha$ that belongs to $f$.
Note, however, that if $\mathbf T$ is an $\aleph_1$-tree with no more than $\aleph_1$-many branches, then $\diamondsuit(\mathbf T)$ easily holds.

Hereafter, whenever we talk about a \emph{binary $\aleph_1$-tree},
we mean a set $T$ satisfying the first three bullets of Definition~\ref{Def11},
and we shall freely identify it with the Hausdorff $\aleph_1$-tree $\mathbf T:=(T,{{\subsetneq}})$.

\begin{lemma}\label{lemma21} For every Hausdorff $\aleph_1$-tree $\mathbf T=(T,<_T)$, there exists a binary $\aleph_1$-tree $S$ that is club-isomorphic to $\mathbf T$,
i.e., for some club $D\s\omega_1$, $(\bigcup_{\alpha\in D}T_\alpha,<_T)$ and $(\bigcup_{\alpha\in D}S_\alpha,{\subsetneq})$ are order-isomorphic.
In particular, if $\mathbf T$ is Kurepa, then $S$ is Kurepa and $\diamondsuit(S)$ iff $\diamondsuit(\mathbf T)$.
\end{lemma}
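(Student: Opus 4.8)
The goal is to build, from an arbitrary Hausdorff $\aleph_1$-tree $\mathbf T=(T,<_T)$, a binary $\aleph_1$-tree $S\s{}^{<\omega_1}2$ together with a club $D\s\omega_1$ witnessing that the restrictions of the two trees to levels in $D$ are order-isomorphic, and then to observe that this transfers both Kurepa-ness and the principle $\diamondsuit$. I will split the argument accordingly.

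\medskip

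\noindent\textbf{Constructing the isomorphism on a club.}
The plan is to recursively build an increasing continuous chain of countable elementary submodels $\langle M_i\mid i<\omega_1\rangle$ of $(\H(\aleph_2),\in,<^*)$ with $\mathbf T\in M_0$, and let $D:=\{\delta_i\mid i<\omega_1\}$ where $\delta_i:=M_i\cap\omega_1$. This $D$ is a club. The key point is that for $\delta\in D$, say $\delta=\delta_i$, the initial segment $\mathbf T\restriction\delta:=(\bigcup_{\alpha<\delta}T_\alpha,<_T)$ is exactly the tree coded inside $M_i$; in particular it is a \emph{countable} Hausdorff tree of height $\delta$ in which every node has a level strictly above it within $\mathbf T\restriction\delta$ (since $\mathbf T$ is an $\aleph_1$-tree, hence of height $\omega_1>\delta$, we can appeal to elementarity, or simply to the fact that each level $T_\alpha$ for $\alpha<\delta$ is nonempty and countable). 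Enumerate $\bigcup_{\alpha<\delta_i}T_\alpha$ in order type $\omega$ using the parameters available, and by recursion on $i$ assign to each node $x$ with $\h(x)\in[\delta_i,\delta_{i+1})$ a binary string $\sigma(x)\in{}^{\h(x)}2$ so that: $x<_T y\iff\sigma(x)\subsetneq\sigma(y)$; $\dom(\sigma(x))=\h(x)$; the map respects the already-defined values on lower levels; and at a node of limit height $\delta_i$ we set $\sigma(x):=\bigcup\{\sigma(y)\mid y<_T x\}$, which is a well-defined element of ${}^{\delta_i}2$ because $\mathbf T$ is Hausdorff (so $\sigma$ is injective on $x_\downarrow$, and the union has the right domain since $x$ has nodes of every height below $\delta_i$ beneath it). The successor-height and successor-stage bookkeeping is where one must be careful: to make $S$ downward-closed one must fill in \emph{all} binary strings $s\restriction\alpha$ for $s\in\im(\sigma)$ and $\alpha<\dom(s)$; adding these does not change the isomorphism type on levels in $D$ because those "filler" strings live only at heights outside $D$ (between consecutive $\delta_i$'s or, for the part below $\delta_0$, below $\delta_0$), and one checks they keep levels countable. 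Set $S:=\{s\restriction\alpha\mid s\in\im(\sigma),\ \alpha\le\dom(s)\}$. Then $S\s{}^{<\omega_1}2$, is downward-closed, and has countable levels, so it is a binary $\aleph_1$-tree, and $\sigma$ restricted to $\bigcup_{\delta\in D}T_\delta$ is an order-isomorphism onto $\bigcup_{\delta\in D}S_\delta$ by construction.

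\medskip

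\noindent\textbf{Transfer of Kurepa-ness and of $\diamondsuit$.}
A cofinal branch $f\in\mathcal B(\mathbf T)$ meets $T_\delta$ in a unique node for each $\delta\in D$, and mapping that node through $\sigma$ and taking the union over $\delta\in D$ yields a cofinal branch $\hat f\in\mathcal B(S)$ (the union is a total function $\omega_1\to2$ since $D$ is unbounded, and it is a branch since each of its proper initial segments lies in some $S_\alpha$); conversely every $g\in\mathcal B(S)$ arises this way. Since $D$ is uncountable, distinct branches of $\mathbf T$ that separate at some level $\alpha<\omega_1$ already separate at some level in $D$, so $f\mapsto\hat f$ is injective; symmetrically it is surjective onto $\mathcal B(S)$. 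Hence $|\mathcal B(\mathbf T)|=|\mathcal B(S)|$, and $\mathbf T$ Kurepa implies $S$ Kurepa.

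\medskip

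\noindent\textbf{Equivalence of the diamonds.}
Now suppose $\diamondsuit(\mathbf T)$ holds, witnessed by $\langle t_\alpha\mid\alpha<\omega_1\rangle\in\prod_{\alpha<\omega_1}T_\alpha$. Define $\langle s_\alpha\mid\alpha<\omega_1\rangle$ by: for $\delta\in D$ put $s_\delta:=\sigma(t_\delta)$, and for $\alpha\notin D$ choose $s_\alpha\in S_\alpha$ arbitrarily. Given $g\in\mathcal B(S)$, write $g=\hat f$ for the unique $f\in\mathcal B(\mathbf T)$; then $\{\alpha<\omega_1\mid g\restriction\alpha=s_\alpha\}\supseteq\{\delta\in D\mid f\cap T_\delta=t_\delta\}=G(f)\cap D$, and since $G(f)$ is stationary and $D$ is club, this set is stationary. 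Thus $\diamondsuit(S)$ holds. The reverse implication is symmetric: pull a witness for $\diamondsuit(S)$ back through $\sigma^{-1}$ on levels in $D$ (the only levels that matter, again because $D$ is club), filling the levels of $\mathbf T$ outside $D$ arbitrarily. This completes the plan.

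\medskip

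\noindent\textbf{Main obstacle.}
I expect the technical heart to be the recursive construction of $\sigma$: one must simultaneously (i) keep $\sigma$ order-preserving and its image closed enough to later form a downward-closed $S$ with countable levels, and (ii) handle limit heights using Hausdorffness to guarantee the union defining $\sigma(x)$ has full domain $\h(x)$ and lands in $2$ rather than being a partial function — this uses both normality-type fullness of $\mathbf T$ below $\delta$ and the Hausdorff condition for injectivity. Everything after the construction (the two branch bijections and the two diamond transfers) is a routine club-intersection argument.
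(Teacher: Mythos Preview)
Your overall strategy---build an order-embedding of $\mathbf T$ into ${}^{<\omega_1}2$, take the downward closure to get $S$, and transfer Kurepa-ness and $\diamondsuit$ by intersecting with the club $D$---is exactly the paper's approach, and your transfer arguments in the last two paragraphs are correct. However, the construction of $\sigma$ has a genuine gap.

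You require simultaneously that $\dom(\sigma(x))=\h(x)$ for every $x$ and that $x<_T y\iff\sigma(x)\subsetneq\sigma(y)$. These are incompatible at successor levels. Suppose $z\in T_\alpha$ (with $\alpha\ge\delta_0$) has three distinct immediate successors $x_0,x_1,x_2\in T_{\alpha+1}$; a general Hausdorff $\aleph_1$-tree allows arbitrary countable branching. Then each $\sigma(x_k)$ must be a one-bit extension of $\sigma(z)$, but there are only two such extensions in ${}^{\alpha+1}2$, so two of the $\sigma(x_k)$ coincide. Say $\sigma(x_0)=\sigma(x_1)$. Pick any $w>_T x_0$ (which exists unless $x_0$ is a dead end, and an $\aleph_1$-tree has nodes of all countable heights so at worst relabel). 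Then $\sigma(x_1)=\sigma(x_0)\subsetneq\sigma(w)$, hence by your biconditional $x_1<_T w$; since also $x_0<_T w$ and $\h(x_0)=\h(x_1)$, this forces $x_0=x_1$, a contradiction. Your ``main obstacle'' paragraph flags the successor bookkeeping as delicate, but the issue is not the downward closure---it is that one bit cannot code countable branching.

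The paper repairs this by stretching heights: it defines $\psi$ with $\dom(\psi(t))=\omega\cdot\h(t)$, so that at each successor step it appends an entire element of ${}^\omega 2$ (chosen from a fixed injective sequence $\langle r_m\mid m<\omega\rangle$ via an injection $\varphi_\alpha:T_{\alpha+1}\to\omega$), giving room for countably many immediate successors. The club is then simply $D=\{\alpha<\omega_1\mid\omega\cdot\alpha=\alpha\}$; no elementary submodels are needed. Alternatively, your argument becomes correct if you drop the intermediate levels altogether and define $\sigma$ only on $\bigcup_{\delta\in D}T_\delta$: at $\delta_{i+1}$ inject the countably many $T_{\delta_{i+1}}$-nodes above a given $z\in T_{\delta_i}$ into the continuum-many length-$\delta_{i+1}$ extensions of $\sigma(z)$, and at limit $j$ take unions as you already do (continuity of $\langle\delta_i\rangle$ gives the right domain, and Hausdorffness gives injectivity).
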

\begin{proof} Let $\mathbf T=(T,<_T)$ be a given Hausdorff $\aleph_1$-tree.
For every $\alpha<\omega_1$, fix an injection $\varphi_\alpha:T_{\alpha+1}\rightarrow\omega$.
Also fix an injective sequence $\langle r_m\mid m<\omega\rangle$ of functions from $\omega$ to $2$.
We shall define an injection $\psi:T\rightarrow{}^{<\omega_1}2$ satisfying the following two requirements:
\begin{itemize}
\item[(1)] for every $t\in T$, $\dom(\psi(t))=\omega\cdot\h(t)$;
\item[(2)] for all $t',t\in T$, $t'<_T t$ iff $\psi(t')\subsetneq\psi(t)$.
\end{itemize}

The definition of $\psi$ is by recursion on the heights of the nodes in $\mathbf T$.
By Clause~(1) we are obliged to send the unique root of $\mathbf T$ to $\emptyset$ and by Clauses (1) and (2), for every $\alpha\in\acc(\omega_1)$ and every $t\in T_\alpha$,\footnote{For a set of ordinals $A$, we write $\acc(A):=\{\alpha\in A\mid\sup(\alpha)=\alpha>0\}$.}
we are obliged to set $\psi(t):=\bigcup\{\psi(t')\mid t'<_T t\}$.\footnote{The injectivity here follows from Hausdorff-ness.}
Thus, the only freedom we have is at nodes of successor levels.
Here, for every $\alpha<\omega_1$ such that $\psi\restriction T_\alpha$ has already been defined, and for every $t\in T_{\alpha+1}$, let $t^-$ denote the immediate predecessor of $t$, and set
$$\psi(t):=\psi(t^-){}^\smallfrown r_{\varphi_\alpha(t)}.$$

Finally, consider $S:=\{ s\restriction\alpha\mid s\in\im(\psi),~\alpha\le\dom(s)\}$.
It is clear that $S$ is a downward-closed subfamily of ${}^{<\omega_1}2$.
Thus, to see that $S$ is a binary $\aleph_1$-tree, it suffices to prove that all of its levels are countable.
However, by Clauses (1) and (2), more is true, namely, for every $\alpha<\omega_1$,
$$S_\alpha=\{ \psi(t)\restriction\alpha\mid t\in T_\alpha\}.$$

Consider the club $D:=\{\alpha<\omega_1\mid\omega\cdot\alpha=\alpha\}$.
Evidently, $\psi$ witnesses that $(\bigcup_{\alpha\in D}T_\alpha,<_T)$ and $(\bigcup_{\alpha\in D}S_\alpha,{\subsetneq})$ are order-isomorphic.
\end{proof}

Any tree $\mathbf T=(T,<_T)$ can be cofinally-embedded in a Hausdorff tree of the form $(S,{\subsetneq})$ by letting $S$ be the
downward closure of the collection of all $s:\alpha\rightarrow T$
that are order-preserving maps from \emph{a successor} ordinal $(\alpha,{\in})$ onto some downward-closed subset of $(T,<_T)$.
The map that sends each $t\in T$ to the unique $s\in S$ with $\max(\im(s),<_T)=t$
embeds $\mathbf T$ to the successor levels of $S$. In case that $\mathbf T$ is non-Hausdorff, there is no better embedding.
Since non-Hausdorff trees lack genuine limit levels and since the definition of diamond on Kurepa trees has to do with stationary sets,
the study here will be focused on the theory of diamond on Hausdorff Kurepa trees.
By Lemma~\ref{lemma21}, then, we may moreover focus on binary Kurepa trees.

\subsection{Diamonds} Earlier on, we pointed out that $\diamondsuit\implies\clubsuit\implies\clubsuit_w\implies\diamondsuit(T)$
for any binary Kurepa tree $T$. We now give the details of the last implication.
\begin{prop} If $\clubsuit_w$ holds, then $\diamondsuit(T)$ holds for every binary Kurepa tree $T$.
\end{prop}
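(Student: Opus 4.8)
The plan is to trade the task of building a transversal for a guessing task about uncountable subsets of $\omega_1$, which is precisely what $\clubsuit_w$ delivers.

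\emph{Coding branches by sets.} Fix, for each limit $\beta<\omega_1$, an enumeration $\langle u^\beta_n\mid n<\omega\rangle$ of the countable level $T_\beta$, and to each branch $f\in\mathcal B(T)$ attach
\[
X_f:=\{\,\beta+n\mid\beta<\omega_1\text{ limit},\ u^\beta_n=f\restriction\beta\,\}.
\]
Since $X_f$ meets each block $[\beta,\beta+\omega)$ in exactly one point, $X_f$ is uncountable, and for distinct branches $f\ne g$ the set $X_f\cap X_g$ is bounded in $\omega_1$ (it lives inside the blocks indexed by ordinals where $f$ and $g$ still agree). Conversely, for any $A\s\alpha$ cofinal in a limit $\alpha$, read off from each block met exactly once by $A$ the corresponding node of the level below, and let $s(A)$ be the union of these nodes if they form a chain whose union lies in $T_\alpha$, undefined otherwise. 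The point is: whenever $s(A)$ is defined it is the \emph{unique} $v\in T_\alpha$ with $A\s X_v$; hence for $A$ cofinal in $\alpha$ we have $s(A)=f\restriction\alpha$ exactly when $A\s X_f$, and since $f\restriction\alpha\in T_\alpha$ for a branch $f$, $s(A)$ is defined whenever $A\s X_f$ is cofinal in $\alpha$.

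\emph{Using $\clubsuit_w$.} Let $\langle\mathcal A_\alpha\mid\alpha\in\acc(\omega_1)\rangle$ witness $\clubsuit_w$, with $\mathcal A_\alpha=\{A^\alpha_n\mid n<\omega\}$ a family of cofinal subsets of $\alpha$. Under the single-ladder principle $\clubsuit$ we would be done at once: set $t_\alpha:=s(A_\alpha)$ (any node of $T_\alpha$ when undefined); then for a branch $f$, $\clubsuit$ applied to $X_f$ gives stationarily many $\alpha$ with $A_\alpha\s X_f$, and at each of them $t_\alpha=f\restriction\alpha$. For genuine $\clubsuit_w$, apply the principle to the uncountable set $X_f$ to get a stationary set of $\alpha$ carrying some $A\in\mathcal A_\alpha$ with $A\s X_f$; by $\sigma$-completeness of the nonstationary ideal on $\omega_1$ there is then a single index $n=n(f)$ with $\{\alpha\mid A^\alpha_{n(f)}\s X_f\}$ stationary. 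So, letting $t^n_\alpha:=s(A^\alpha_n)$ when defined and a default node of $T_\alpha$ otherwise, we obtain countably many transversals $\langle t^n_\alpha\mid\alpha<\omega_1\rangle$ with the property that every branch $f$ is caught, on a stationary set, by $\langle t^{n(f)}_\alpha\rangle$.

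\emph{Amalgamation — the crux.} It remains to merge the $\langle t^n_\alpha\rangle$ into a single transversal catching every branch stationarily often, and this is the step I expect to be the main obstacle. The naive attempt — fix a partition $\omega_1=\bigsqcup_{n<\omega}E_n$ into stationary pieces and set $t_\alpha:=t^n_\alpha$ for $\alpha\in E_n$ — is not automatic, because the success set of $\langle t^{n(f)}_\alpha\rangle$, which equals $\{\alpha\mid A^\alpha_{n(f)}\s X_f\}$, need not meet $E_{n(f)}$ stationarily. My plan is to choose the partition \emph{after} inspecting the $\clubsuit_w$-sequence, exploiting that for each fixed $n$ the success sets $\{\alpha\mid A^\alpha_n\s X_f\}$, as $f$ ranges over branches with $n(f)=n$, form an \emph{almost disjoint} family of stationary sets (precisely because $X_f\cap X_{f'}$ is bounded for $f\ne f'$), and arranging each $E_n$ to stab every member of that family on a stationary set; it is convenient to pass first to the pruned subtree of nodes lying on a branch, which changes neither $\mathcal B(T)$ nor the truth of $\diamondsuit(T)$. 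The remaining checks are routine; the delicate part is organising this last bookkeeping so that no branch is served on only a nonstationary set.
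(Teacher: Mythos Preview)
Your amalgamation step is where the argument stalls. You correctly flag it as the crux, but the proposed fix---choosing a partition $\langle E_n\rangle$ so that each $E_n$ stationarily meets every member of the almost-disjoint family $\{\alpha\mid A^\alpha_n\subseteq X_f\}_{n(f)=n}$---is not routine: these families can have size $\ge\aleph_2$, almost-disjointness by itself does not hand you such a partition in \zfc, and you give no construction. Calling the remaining checks ``routine'' does not discharge the obligation.

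The paper sidesteps the problem entirely by working with the single-ladder formulation of $\clubsuit_w$ (the one in the cited reference): one $A_\alpha\subseteq\alpha$ of order type $\omega$ per limit $\alpha$, guessing via $|A_\alpha\setminus X|<\aleph_0$ rather than $A_\alpha\subseteq X$. The finite slack is then absorbed into the \emph{definition} of the transversal: fixing a bijection $\pi:\omega_1\to T$, one lets $t_\alpha$ be the unique node of $T_\alpha$ (if any) of the form $\bigcup\{\pi(\gamma)\mid\gamma\in A_\alpha\setminus F\}$ for some finite $F\subseteq A_\alpha$; such a node, when it exists, is independent of $F$ because cofinitely many of the $\pi(\gamma)$ already have heights cofinal in $\alpha$ and hence determine it. For a branch $f$ one codes $f$ by $A:=\{\pi^{-1}(f\restriction\delta)\mid\delta\in D\}$ along a suitable club $D$, and at every $\alpha$ with $A_\alpha\setminus A$ finite one reads off $t_\alpha=f\restriction\alpha$ directly. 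There is only one transversal, so nothing to amalgamate.

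Translated into your framework: take $\mathcal A_\alpha$ to consist of the cofinite subsets of a single $A_\alpha$. Then your decoded nodes $s(A^\alpha_n)$ coincide for all $n$ (under your block coding they depend only on the tail of $A_\alpha$), so all of your $t^n_\alpha$ are equal and the amalgamation evaporates. The missing idea is not a clever partition but the observation that, with the correct form of $\clubsuit_w$, there was never more than one transversal to begin with.
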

\begin{proof} Suppose that $\clubsuit_w$ holds. Recalling \cite[p.~61]{Sh:544},
this means that we may fix a sequence $\langle A_\alpha \mid\alpha\in\acc(\omega_1)\rangle$
such that each $A_\alpha$ is a cofinal subset of $\alpha$ of order-type $\omega$,
and, for every uncountable $A\s\omega_1$, the following set is stationary:
$$G(A):=\{\alpha\in\acc(\omega_1)\mid A_\alpha\setminus A\text{ is finite}\}.$$

Now, suppose $T$ is a binary Kurepa tree.
Fix a bijection $\pi:\omega_1\leftrightarrow T$.
For every $\alpha\in\acc(\omega_1)$, if there are $t_\alpha\in T_\alpha$ and a finite $F_\alpha\s A_\alpha$ such that
$$t_\alpha=\bigcup\{\pi(\gamma)\mid\gamma\in A_\alpha\setminus F_\alpha\},$$
then we keep this unique $t_\alpha$; otherwise (including the case $\alpha\notin\acc(\omega_1)$), we let $t_\alpha$ be an arbitrary choice of an element of $T_\alpha$.
To see that $\langle t_\alpha\mid\alpha<\omega_1\rangle$ witnesses $\diamondsuit(T)$,
let $f\in\mathcal B(T)$. Consider the following club:
$$D:=\{\delta\in\acc(\omega_1) \mid\pi[\delta]=\bigcup\nolimits_{\alpha<\delta}T_\alpha\}.$$
Note that for every $\delta\in D$, $\delta\le\pi^{-1}(f\restriction\delta)<\min(D\setminus(\delta+1))$.
In particular, the following set is uncountable:
$$A:=\{ \pi^{-1}(f\restriction\delta)\mid\delta\in D\}.$$
We claim that for every $\alpha\in G(A)$, it is the case that $f\restriction\alpha=t_\alpha$.
Indeed, the set $F_\alpha:=A_\alpha\setminus A$ is finite, and for every $\gamma\in A_\alpha\setminus F_\alpha$,
there exists a unique $\delta\in D$ such that $\delta\le\gamma=\pi^{-1}(f\restriction\delta)<\min(D\setminus(\delta+1))$.
So, since $\sup(A_\alpha\setminus F_\alpha)=\alpha$, it is the case that
$$t_\alpha=\bigcup\{\pi(\gamma)\mid\gamma\in A_\alpha\setminus F_\alpha\}=f\restriction\alpha,$$
as sought.
\end{proof}

It follows from the preceding that $\diamondsuit(T)$ is compatible with $\neg\ch$.
An alternative reasoning goes through the next proposition. Namely, add $\aleph_2$ many Cohen reals to a model of $\diamondsuit^+$.

\begin{prop}\label{prop23} Suppose that $\mathbf T$ is a Kurepa tree such that $\diamondsuit(\mathbf T)$ holds, and that $\mathbb P$ is a notion of forcing.
If $\mathbb P$ is $\sigma$-closed or if $\mathbb P^2$ satisfies the ccc, then $\diamondsuit(\mathbf T)$ remains to hold in $V^{\mathbb P}$.
\end{prop}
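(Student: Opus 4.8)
The plan is to reduce the proposition, in both cases, to two preservation properties of $\mathbb P$: that it (a) preserves $\omega_1$ together with stationary subsets of $\omega_1$, and (b) adds no new cofinal branch to $\mathbf T$. Granting these, the argument is uniform and short. Fix a transversal $\vec t=\langle t_\alpha\mid\alpha<\omega_1\rangle$ witnessing $\diamondsuit(\mathbf T)$ in $V$. Since the levels $T_\alpha$ are unaltered, $\vec t$ remains a transversal in $V^{\mathbb P}$; by (b) together with the preservation of $\omega_1$, the family $\mathcal B(\mathbf T)$ is the very same in $V^{\mathbb P}$ as in $V$, so for any $f\in\mathcal B(\mathbf T)^{V^{\mathbb P}}$ the set $G(f)=\{\alpha<\omega_1\mid f\restriction\alpha=t_\alpha\}$ is literally unchanged and hence, by (a), stationary in $V^{\mathbb P}$. (If $\mathbb P$ collapses $\aleph_2$ and thereby destroys the Kurepa-ness of $\mathbf T$, then $\diamondsuit(\mathbf T)$, read in the abstract sense of Section~\ref{sec2}, holds in $V^{\mathbb P}$ for the trivial reason recorded there; either way we are done.)

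Property (a) I would dispatch with the standard facts: a $\sigma$-closed forcing adds no new $\omega$-sequence and therefore, by the usual countable-elementary-submodel argument, preserves stationary subsets of $\omega_1$; while a ccc forcing (and $\mathbb P^2$ being ccc certainly implies $\mathbb P$ is ccc) preserves all cardinals and has the property that every club subset of $\omega_1$ in its extension contains a ground-model club, whence it too preserves stationary subsets of $\omega_1$.

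Property (b) is the substance, and I would establish it by the familiar tree-of-conditions method. Suppose some $p$ forces a name $\dot b$ to be a cofinal branch through $\mathbf T$ lying outside $V$. Two easy observations set things up: (i) for any $q\le p$ and any $\gamma_0<\omega_1$ there are $\gamma\in(\gamma_0,\omega_1)$ and $q_0,q_1\le q$ deciding $\dot b\restriction\gamma$ to two distinct — hence, being on a common level, incomparable — nodes of $\mathbf T$, since otherwise a cofinal branch of $\mathbf T$ lying in $V$ could be read off from $\dot b$ below some extension of $q$, contradicting the choice of $\dot b$; and (ii) two conditions forcing $\dot b$ to pass through incomparable nodes are incompatible in $\mathbb P$. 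In the $\sigma$-closed case I would use (i) to build a tree of conditions $\langle p_s\mid s\in{}^{<\omega}2\rangle$, with $p_t\le p_s$ whenever $s\subseteq t$, and a strictly increasing sequence $\langle\gamma_n\mid n<\omega\rangle$ of countable ordinals, such that $p_s$ decides $\dot b\restriction\gamma_{|s|}$ and distinct $s,s'$ of equal length decide it differently. Setting $\gamma:=\sup_n\gamma_n<\omega_1$ and, for each $x\in{}^{\omega}2$, passing to a lower bound $p_x$ of $\langle p_{x\restriction n}\mid n<\omega\rangle$ (here $\sigma$-closedness is used), the map sending $x$ to the node that $p_x$ forces $\dot b\restriction\gamma$ to be embeds $2^{\aleph_0}$ into $T_\gamma$, which is absurd. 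In the $\mathbb P^2$-ccc case I would instead recursively build $\langle(p_\xi,q_\xi,\gamma_\xi)\mid\xi<\omega_1\rangle$ with $\gamma_\xi$ strictly increasing and with $p_\xi,q_\xi\le p$ forcing $\dot b\restriction\gamma_\xi$ to distinct nodes $v_\xi,w_\xi$ that agree below $\sup_{\zeta<\xi}\gamma_\zeta$; then $\{(p_\xi,q_\xi)\mid\xi<\omega_1\}$ is an antichain in $\mathbb P\times\mathbb P$, because for $\xi<\eta$ the nodes $v_\eta,w_\eta$ have a common restriction to level $\gamma_\xi$, so $v_\xi$ and $w_\xi$ cannot both be initial segments of $v_\eta$ and of $w_\eta$ respectively without coinciding — hence at least one of the pairs $\{v_\xi,v_\eta\}$, $\{w_\xi,w_\eta\}$ consists of incomparable nodes, which by (ii) makes one of the two coordinates incompatible. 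This uncountable antichain contradicts $\mathbb P^2$ being ccc.

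The step I expect to be the main obstacle is property (b) in the $\mathbb P^2$-ccc case: organizing the recursion so that the resulting pairs genuinely form an antichain in the product — which is exactly the role of requiring $v_\xi$ and $w_\xi$ to agree below $\sup_{\zeta<\xi}\gamma_\zeta$. Everything else reduces to standard preservation theorems or routine density arguments.
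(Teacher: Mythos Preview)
Your proposal is correct and follows exactly the paper's strategy: reduce $\diamondsuit(\mathbf T)$-preservation to (a) preservation of stationary subsets of $\omega_1$ and (b) the non-addition of new cofinal branches through $\mathbf T$. The only difference is that the paper dispatches (b) by citation---to Silver \cite{MR0277379} for the $\sigma$-closed case and to \cite[Lemma~2.2]{MR3072773} for the $\mathbb P^2$-ccc case---whereas you supply self-contained tree-of-conditions arguments for both, and those arguments are correct.
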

\begin{proof} By \cite{MR0277379}, a $\sigma$-closed forcing does not add new branches to $\aleph_1$-trees.
By \cite[Lemma~2.2]{MR3072773}, a forcing notion whose square satisfies the ccc
does not add new branches to $\aleph_1$-trees. In addition, stationary sets are preserved by $\sigma$-closed notions of forcing and by ccc notions of forcing.
So, in both cases, the ground model witness to $\diamondsuit(\mathbf T)$ will survive as a witness in $V^{\mathbb P}$.
\end{proof}

Another simple argument shows that an instance of the parameterized diamond principle from \cite{mhd} is enough.

\begin{prop}\label{parametrizeddiamond} If $\Phi(\omega,=)$ holds, then so does $\diamondsuit(T)$ for every binary Kurepa tree $T$.
\end{prop}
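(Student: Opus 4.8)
The plan is to exploit the fact that the levels of a binary Kurepa tree $T$ are countable, so that the datum ``which node of $T_\alpha$ equals $f\restriction\alpha$'' is an $\omega$-valued piece of information that is Borel in $f\restriction\alpha$ --- precisely the kind of thing $\Phi(\omega,=)$ is built to guess. Recall (see \cite{mhd}) that $\Phi(\omega,=)$ asserts: for every sequence $\langle F_\alpha\mid\alpha<\omega_1\rangle$ of Borel maps $F_\alpha\colon{}^{\alpha}2\to\omega$, there is a sequence $\langle n_\alpha\mid\alpha<\omega_1\rangle$ of natural numbers such that for every $f\in{}^{\omega_1}2$ the set $\{\alpha<\omega_1\mid F_\alpha(f\restriction\alpha)=n_\alpha\}$ is stationary.

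So, fix a binary Kurepa tree $T$. For each $\alpha<\omega_1$, since $T_\alpha$ is a nonempty countable set, I would fix a surjection $e_\alpha\colon\omega\to T_\alpha$, and then define $F_\alpha\colon{}^{\alpha}2\to\omega$ by letting $F_\alpha(s)$ be the least $n$ with $e_\alpha(n)=s$ when $s\in T_\alpha$, and $F_\alpha(s):=0$ otherwise. Each $F_\alpha$ is Borel: as ${}^{\alpha}2$ is a Polish space and $T_\alpha\s{}^{\alpha}2$ is countable, hence Borel, one has $F_\alpha^{-1}\{0\}=({}^{\alpha}2\setminus T_\alpha)\cup\{e_\alpha(0)\}$ and $F_\alpha^{-1}\{n\}$ finite for every $n\ge1$.

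Next I would apply $\Phi(\omega,=)$ to $\langle F_\alpha\mid\alpha<\omega_1\rangle$ to obtain $\langle n_\alpha\mid\alpha<\omega_1\rangle$ as above, and put $t_\alpha:=e_\alpha(n_\alpha)\in T_\alpha$; this gives a transversal $\langle t_\alpha\mid\alpha<\omega_1\rangle\in\prod_{\alpha<\omega_1}T_\alpha$. To verify that it witnesses $\diamondsuit(T)$, fix $f\in\mathcal B(T)$. The set $S:=\{\alpha<\omega_1\mid F_\alpha(f\restriction\alpha)=n_\alpha\}$ is stationary, and for $\alpha\in S$ we have $f\restriction\alpha\in T_\alpha$ (as $f$ is a branch), so $F_\alpha(f\restriction\alpha)$ is a genuine $e_\alpha$-index of $f\restriction\alpha$; thus $t_\alpha=e_\alpha(n_\alpha)=e_\alpha(F_\alpha(f\restriction\alpha))=f\restriction\alpha$. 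Hence $S\s G(f)$, so $G(f)$ is stationary, as required.

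I do not expect any real obstacle here: the computation is elementary. The single point worth care is that, although $T$ is an uncountable object carrying no real parameter, this causes no trouble --- $\Phi(\omega,=)$ accepts a whole $\omega_1$-indexed sequence of Borel functions rather than one globally Borel function, and each $F_\alpha$ is allowed to depend on the countable set $T_\alpha$ (i.e.\ on a real coding it). Recognizing that this level-wise formulation suffices, together with the trivial remark that the guessed numbers $n_\alpha$ can always be decoded through the $e_\alpha$'s, is all that is needed.
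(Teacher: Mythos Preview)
Your proposal is correct and follows essentially the same approach as the paper: enumerate each level $T_\alpha$, use the enumeration to turn ``which node equals $f\restriction\alpha$'' into an $\omega$-valued map, apply $\Phi(\omega,=)$ to obtain the guesses, and decode back to a transversal. The only cosmetic difference is that the paper packages the maps as a single $F:{}^{<\omega_1}2\to\omega$ rather than a sequence $\langle F_\alpha\mid\alpha<\omega_1\rangle$, and does not pause to verify the Borel requirement (your check of it is correct and arguably more faithful to the formulation in \cite{mhd}).
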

\begin{proof} Suppose that $\Phi(\omega,=)$ holds. This means that for every function $F:{}^{<\omega_1}2\rightarrow\omega$,
there exists a function $g:\omega_1\rightarrow\omega$ such that, for every function $f:\omega_1\rightarrow\omega$,
the following set is stationary:
$$G(f):=\{ \alpha<\omega_1\mid F(f\restriction\alpha)=g(\alpha)\}.$$

Now, suppose $T$ is a binary Kurepa tree.
For every $\alpha<\omega_1$, fix an enumeration $\langle t^n_\alpha \mid n<\omega\rangle$ of $T_\alpha$.
Fix a function $F:{}^{<\omega_1}2\rightarrow\omega$ such that for all $\alpha<\omega_1$ and $t\in T_\alpha$,
$$F(t)=\min\{n<\omega\mid t=t_\alpha^n\},$$
and then let $g:\omega_1\rightarrow \omega$ be the corresponding function given by $\Phi(\omega,{=})$.
A moment's reflection makes it clear that the transversal $\langle t_\alpha^{g(\alpha)} \mid\alpha < \omega_1\rangle$ witnesses $\diamondsuit(T)$.
\end{proof}

Recall that a \emph{Souslin tree} is an Aronszajn tree with no uncountable antichains.
Every Souslin tree contains a Souslin subtree that is normal.
As established in \cite{mhd}, forcing with such trees gives an instance of $\Phi(\omega,=)$. A variant of that argument gives the following.
\begin{prop}\label{prop2} Forcing with a normal Souslin tree adds a $\diamondsuit(T)$-sequence for every ground model binary $\aleph_1$-tree $T$.
\end{prop}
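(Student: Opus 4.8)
The plan is to force with $S$, regarding conditions as the nodes of $S$ ordered so that a node extending another in the tree order $<_S$ is the stronger condition; let $\dot b$ name the resulting cofinal branch through $S$ and, for $\alpha<\omega_1$, let $\dot b(\alpha)$ name the node of $\dot b$ of height $\alpha$. First I would record, in $V$, that the cone above any node $y\in S$ contains a node with at least two immediate successors --- otherwise, by normality, that cone would contain an uncountable chain, contradicting that $S$ is Aronszajn. Iterating this $\omega$-many times above each node of $S_\alpha$ and passing to a supremum yields, definably in $V$, a function $\alpha\mapsto\lambda(\alpha)$ with $\alpha<\lambda(\alpha)<\omega_1$ such that every node of $S_\alpha$ has infinitely many extensions in $S_{\lambda(\alpha)}$. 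For each $\alpha$ I then fix a map $d_\alpha\colon S_{\lambda(\alpha)}\to T_\alpha$ whose restriction to the set of extensions of $y$ is onto $T_\alpha$, for every $y\in S_\alpha$. The candidate $\diamondsuit(T)$-sequence is the name $\langle\dot t_\alpha\mid\alpha<\omega_1\rangle$ where $\dot t_\alpha$ names $d_\alpha(\dot b(\lambda(\alpha)))$; this is forced to lie in $\prod_{\alpha<\omega_1}T_\alpha$.

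Since $S$ is ccc it preserves $\omega_1$, and since every branch of $T$ in the extension is named by some $S$-name, it now suffices to establish the following density statement: given a condition $s_0$ and $S$-names $\dot f,\dot C$ with $s_0\Vdash$``$\dot f\in\mathcal B(T)$ and $\dot C$ is a club in $\omega_1$'', there are a condition $x'$ extending $s_0$ and an ordinal $\delta<\omega_1$ with $x'\Vdash$``$\delta\in\dot C$ and $\dot f\restriction\delta=\dot t_\delta$''. (Granting this, the theorem follows routinely: otherwise some condition would force some $G(\dot f)$ to be disjoint from a club, against the density statement.) To prove it, I would fix a countable $M\prec\H(\theta)$, for a large regular $\theta$, with $S,T,s_0,\dot f,\dot C$ and the sequences $\langle\lambda(\alpha)\rangle,\langle d_\alpha\rangle$ as elements, and set $\delta:=M\cap\omega_1$. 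As $S\in M$, every $S_\beta$ with $\beta<\delta$ is a countable element of $M$, so $\bigcup_{\beta<\delta}S_\beta\s M$; hence any maximal antichain of the forcing lying in $M$ is, by ccc, countable, so $\s M$, so included in $\bigcup_{\beta<\delta}S_\beta$. Pick (by normality) a node $x\in S_\delta$ extending $s_0$. The standard fact to invoke is that $x$ extends a node of every maximal antichain lying in $M$ --- being compatible with, and of height past, such an antichain --- so that the branch $\{y\in S\mid y\le_S x\}$ is $M$-generic. It follows that $x$ decides $\dot f\restriction\beta$ for every $\beta\in M\cap\delta$, hence (by cofinality of $M\cap\delta$ in $\delta$, and since $\dot f$ is forced to be a branch) for every $\beta<\delta$; writing $u$ for the union of these values, $x\Vdash\dot f\restriction\delta=u$, so $u\in T_\delta$. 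Similarly, for each $\gamma\in M\cap\delta$ the least element of $\dot C$ above $\gamma$ is forced below $\delta$ (being decided by an antichain in $M$), so $x$ forces $\dot C\cap\delta$ to be cofinal in $\delta$; together with $x\Vdash$``$\dot C$ is closed'' this gives $x\Vdash\delta\in\dot C$.

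To close the argument, I would exploit the slack deliberately built into $\dot t_\delta$: since $x\in S_\delta$ and $\lambda(\delta)>\delta$, choose $x'\in S_{\lambda(\delta)}$ extending $x$ with $d_\delta(x')=u$. Then $x'$ extends $s_0$; it forces $\dot b(\lambda(\delta))=x'$, hence $\dot t_\delta=d_\delta(x')=u=\dot f\restriction\delta$; and it forces $\delta\in\dot C$ because it extends $x$. So $(x',\delta)$ is as required. The main obstacle --- and the reason $\dot t_\alpha$ is decoded from $\dot b$ at the later level $\lambda(\alpha)$ rather than read off $\dot b(\alpha)$ directly --- is that one has no control over which value $u\in T_\delta$ the given name $\dot f$ is forced to take at the critical stage $\delta$, so the names $\dot t_\alpha$ must be arranged so that, below the critical node $x\in S_\delta$, every element of $T_\delta$ remains attainable; this is exactly what the ``infinitely many extensions at level $\lambda(\alpha)$'' property buys. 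Apart from that, the only points requiring care are the standard Souslin-tree/elementary-submodel genericity fact used above and the routine verification that, in the ccc extension, $T$ is still a binary $\aleph_1$-tree and ``stationary'' still means ``stationary''.
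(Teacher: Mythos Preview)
Your proof is correct and follows essentially the same approach as the paper's: fix an elementary submodel $M$, use that a node $x\in S_{M\cap\omega_1}$ extending the given condition is $M$-generic (the folklore Souslin fact), read off the value $u=\dot f\restriction\delta$, and then exploit built-in slack above level~$\delta$ to force $\dot t_\delta=u$. The only cosmetic differences are that the paper encodes the slack via an infinite maximal antichain $\langle s^n\mid n<\omega\rangle$ above each node~$s$ (so $\dot t_\alpha=t_\alpha^{g(\alpha)}$ where $g(\alpha)$ records which antichain element the branch hits) rather than via a fixed higher level $\lambda(\alpha)$, and that the paper reduces to ground-model clubs using the ccc whereas you handle a named club directly; neither difference is material.
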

\begin{proof} Working in $V$, suppose that $\mathbf S=(S,{<_S})$ is a normal Souslin tree, and that $T$ is a binary $\aleph_1$-tree.
As $\mathbf S$ is normal and $\mathcal B(\mathbf S)=\emptyset$, for each $s\in S$, we may let $\langle s^n \mid n<\omega\rangle$ be a bijective enumeration of some infinite maximal antichain above $s$.
For every $\alpha<\omega_1$, fix an enumeration $\langle t^n_\alpha \mid n<\omega\rangle$ of $T_\alpha$.
Given a generic $G$ for $\mathbf S^*:=(S,{>_S})$, for every $\alpha<\omega_1$, denote by $s_\alpha$ the unique element of $S_\alpha$ that belongs to $G$,
and then let $g(\alpha)$ denote the unique integer $n<\omega$ such that $(s_\alpha)^n$ belongs to $G$.
We claim that the transversal $\langle t_\alpha^{g(\alpha)} \mid\alpha < \omega_1\rangle$ witnesses $\diamondsuit(T)$.

To see this, back in $V$, fix a condition $s\in S$, a name $\dot{f}$ for a cofinal branch through $T$, and a club $C\s\omega_1$;
we shall find an $\alpha\in C$ and an extension of $s$ forcing that $\dot{f}\restriction\alpha$ coincides with $t_\alpha^{\dot{g}(\alpha)}$.
Note that since $\mathbf S^*$ is ccc, we can indeed restrict our attention to ground model clubs.

Fix a countable $M\prec\H_{\omega_2}$, containing $\{\dot{f},s,\mathbf S^*,T,C\}$.
Write $\alpha:=M\cap\omega_1$, and note that $\alpha\in C$, since $C\in M$.
As $\mathbf S$ is normal, fix $s'\in S_\alpha$ extending $s$.
By a folklore fact, $s'$ is an $\mathbf S^*$-generic branch over $M$, so in particular it determines $\dot{f}\restriction \alpha$, and therefore there is an integer $n$ such that
$$s'\Vdash\dot{f}\restriction \alpha = t_\alpha^n.$$
Set $s'':=(s')^n$, so that $s''\Vdash\dot g(\alpha)=n$. Then $s''\mathrel{>_S}s'\mathrel{>_S}s$, and
$$s''\Vdash\dot{f}\restriction \alpha = t_\alpha^{\dot g(\alpha)},$$
as sought.
\end{proof}
\begin{cor} If $T$ is a normal binary almost-Kurepa Souslin tree, then in some ccc forcing extension, $T$ is a binary Kurepa tree and $\diamondsuit(T)$ holds.
\end{cor}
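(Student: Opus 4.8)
The plan is to take for the ccc forcing the tree $T$ itself, regarded as a notion of forcing in the usual way: let $\mathbb P:=(T,{>_T})$, so that the conditions are the nodes of $T$ and one node extends another iff it lies strictly above it in the tree. The first step is to verify that $\mathbb P$ is ccc. Using the normality of $T$, two conditions are compatible in $\mathbb P$ exactly when they are $<_T$-comparable (the larger of the two then has a proper extension in the tree, and that extension is a common lower bound in $\mathbb P$); hence an antichain of $\mathbb P$ is precisely an antichain of the tree $T$, and since $T$ is Souslin, every such antichain is countable. So $\mathbb P$ is ccc, and in particular it preserves $\omega_1$ and $\omega_2$.

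The second step is to observe that $T$ remains a binary Kurepa tree in $V^{\mathbb P}$. Being a downward-closed subset of ${}^{<\omega_1}2$ is absolute, and, since $\mathbb P$ is ccc, the levels of $T$ --- which are literally the same sets --- remain countable, and the height remains $\omega_1$; thus $T$ is still a binary $\aleph_1$-tree in $V^{\mathbb P}$. By the very definition of an \emph{almost}-Kurepa Souslin tree we have $\Vdash_{\mathbb P}|\mathcal B(\check T)|\ge\aleph_2$, and together with the preservation of $\aleph_2$ this yields that $T$ is a binary Kurepa tree in $V^{\mathbb P}$.

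The third step is to invoke Proposition~\ref{prop2}. The tree $T$ is a normal Souslin tree in $V$ and is, in particular, a ground model binary $\aleph_1$-tree, so the proposition applies with $\mathbf S:=T$ and with the role of ``the ground model binary $\aleph_1$-tree'' also played by $T$: forcing with $\mathbb P=(T,{>_T})$ adds a transversal that witnesses $\diamondsuit(T)$. I would stress that the proof of Proposition~\ref{prop2} works with a \emph{name} $\dot f$ for a cofinal branch through $T$, so the transversal it produces guesses all cofinal branches of $T$ present in $V^{\mathbb P}$ --- in particular the $\ge\aleph_2$-many branches just created, which is exactly what we need, since in $V$ there were none ($T$ being Aronszajn there). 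Combining the second and third steps: in $V^{\mathbb P}$ the tree $T$ is a binary Kurepa tree and $\diamondsuit(T)$ holds, as desired.

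There is no genuine obstacle here beyond the two inputs already at hand; the only point that warrants a moment's care is keeping track of the model in which ``Kurepa'', ``$\aleph_1$-tree'', and ``stationary'' are being evaluated. For the first two this is settled by the ccc-ness of $\mathbb P$ (preservation of $\omega_1$, $\omega_2$, and of the countable levels). For stationarity it is settled inside the proof of Proposition~\ref{prop2}, where ccc-ness is used to reduce the verification that each $G(f)$ is stationary in $V^{\mathbb P}$ to the meeting of ground model clubs. No combinatorics beyond Proposition~\ref{prop2} and the definition of an almost-Kurepa Souslin tree enters.
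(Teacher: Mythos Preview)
Your proposal is correct and follows exactly the paper's approach: force with the Souslin tree $T$ itself (the paper writes $\mathbb P:=(T,{\supseteq})$), use the definition of almost-Kurepa to conclude $T$ becomes Kurepa, and invoke Proposition~\ref{prop2} for $\diamondsuit(T)$. You supply more detail on the ccc verification and the preservation of the binary $\aleph_1$-tree structure than the paper's two-line proof, but the argument is the same.
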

\begin{proof} By definition, as $T$ is almost-Kurepa, in the forcing extension by $\mathbb P:=(T,{\supseteq})$, $T$ is a Kurepa tree. By Proposition~\ref{prop2}, $\diamondsuit(T)$ holds in $V^{\mathbb P}$.
\end{proof}

\subsection{A weaker concept} Devlin \cite[\S2]{MR523488} showed that if there exists a sequence $\langle t_\alpha\mid\alpha<\omega_1\rangle\in\prod_{\alpha<\omega_1}{}^\alpha2$ such that, 
for every function $f:\omega_1\rightarrow2$, there is an infinite ordinal $\alpha<\omega_1$ with $f\restriction\alpha=t_\alpha$, then $\diamondsuit$ holds.
When combined with Theorem~\ref{thmc}, the next proposition shows that this does not generalizes to our context.
\begin{prop} Every Kurepa tree $\mathbf T$ admits a transversal $\langle t_\alpha\mid\alpha<\omega_1\rangle$
such that $G(f):=\{ \alpha<\omega_1\mid f\restriction\alpha=t_\alpha\}$ is uncountable for every $f\in\mathcal B(\mathbf T)$.
\end{prop}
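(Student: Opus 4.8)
The plan is to first reduce, via Lemma~\ref{lemma21}, to the case of a binary Kurepa tree, and to record the elementary but decisive observation that a subset of $\omega_1$ is uncountable if and only if it is unbounded. Thus the real task is to produce a transversal $\langle t_\alpha\mid\alpha<\omega_1\rangle$ meeting every branch \emph{cofinally}: for all $f\in\mathcal B(\mathbf T)$ and all $\beta<\omega_1$ there should be some $\alpha\ge\beta$ with $t_\alpha=f\restriction\alpha$. Equivalently, for every $\beta$ the chosen nodes $\{t_\alpha\mid\alpha\ge\beta\}$ must cover $\mathcal B(\mathbf T)$, in the sense that every branch passes through one of them.

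Second, I would peel off the ``thin'' part of the tree. For a node $p$ write $b(p):=|\{f\in\mathcal B(\mathbf T)\mid p\in f\}|$, and call $p$ \emph{small} if $b(p)\le\aleph_1$. The $\subseteq$-minimal small nodes form an antichain, hence number at most $\aleph_1$, and each carries at most $\aleph_1$ branches; consequently at most $\aleph_1$ branches ever enter the small region. These I would enumerate as $\langle g_i\mid i<\omega_1\rangle$, fix a partition of $\omega_1$ into $\aleph_1$ pairwise disjoint cofinal sets $\langle E_i\mid i<\omega_1\rangle$, and set $t_\alpha:=g_i\restriction\alpha$ for $\alpha\in E_i$; since each $E_i$ is unbounded, this meets every small branch cofinally. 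What remains are the $\ge\aleph_2$ branches living in the ``fat core'' on which every node $p$ has $b(p)\ge\aleph_2$, to be handled on a complementary cofinal set of levels.

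The crux is exactly this fat core, and here I would build the transversal by recursion along a club $C=\langle\delta_\xi\mid\xi<\omega_1\rangle$ arising from a continuous chain of countable elementary submodels, exploiting the \emph{confluence} that at each limit level $\delta$ the $\ge\aleph_2$ branches funnel through the countably many nodes of $T_\delta$. The intention is to rotate the choice of $t_{\delta}$ through these nodes---servicing, via the branches designated to them, every branch on an unbounded set of levels. The main obstacle is that the transversal occupies a single node per level while it must meet all $\ge\aleph_2$ branches cofinally: the sets $G(f)$ are forced to be pairwise almost disjoint, so they cannot be arranged by a naive per-branch reservation (which would demand $\aleph_2$ pairwise disjoint unbounded subsets of $\omega_1$, an impossibility). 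Overcoming this---turning the level-by-level confluence into a single bookkeeping that nonetheless services each of the $\aleph_2$ branches cofinally---is the heart of the argument.
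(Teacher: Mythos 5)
Your proposal is not a proof: it correctly isolates the crux---meeting $\ge\aleph_2$ branches cofinally while placing only one node per level---and then explicitly leaves that crux unresolved (``overcoming this \ldots is the heart of the argument''). The heart is precisely what is missing. Moreover, the way you frame the obstacle points away from the solution. The pairwise almost-disjointness of the sets $G(f)$ is not something to be ``overcome'': $\omega_1$ carries families of $\aleph_2$ (indeed $2^{\aleph_1}$) pairwise almost disjoint unbounded sets---the branches of the Kurepa tree themselves yield one---so the only thing your observation rules out is \emph{per-branch} reservation of levels. The correct conclusion is that one should not do bookkeeping branch by branch at all; no recursion, club of elementary submodels, or designation of branches to nodes is needed. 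Your small/fat decomposition is likewise unnecessary (and your opening reduction via Lemma~\ref{lemma21} is slightly off anyway, since an arbitrary Kurepa tree need not be Hausdorff), because the real argument is insensitive to how many branches pass through a node.

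The paper's proof guesses \emph{nodes}, not branches, and does so in blocks. For each limit $\beta<\omega_1$, fix a surjection $\phi_\beta:\omega\rightarrow T_{\beta+\omega}$ and set $t_{\beta+n}:=\phi_\beta(n)\restriction(\beta+n)$, the unique node of level $\beta+n$ below $\phi_\beta(n)$. Given any branch $f$, its trace $f\restriction(\beta+\omega)$ is one of the countably many nodes of $T_{\beta+\omega}$, say $\phi_\beta(n)$, and then $f\restriction(\beta+n)=t_{\beta+n}$; so $G(f)$ meets every block $[\beta,\beta+\omega)$ and is therefore unbounded, hence uncountable. Note how this dissolves your obstacle: the single guess $t_{\beta+n}$ simultaneously serves \emph{all} (possibly $\aleph_2$-many) branches passing through $\phi_\beta(n)$, while branches that have already split below $\beta$ are served at distinct levels $\beta+n\neq\beta+n'$ of the same block, which is exactly why the $G(f)$'s come out almost disjoint and nothing worse. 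This is the ``confluence'' you gesture at, but exploited correctly: instead of rotating the choice of $t_\delta$ through $T_\delta$ across \emph{different} limit levels $\delta$ (which cannot work as stated, since the countable set of targets changes from one level to the next), one aims all $\omega$ successor levels $\beta+n$ at the \emph{single} countable level $T_{\beta+\omega}$ sitting above them.
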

\begin{proof} Suppose that $\mathbf T=(T,<_T)$ is a Kurepa tree.
Recall that for all $f\in\mathcal B(\mathbf T)$ and $\alpha<\omega_1$, $f\restriction\alpha$ denotes the unique element of $T_\alpha$ that belongs to $f$.
Likewise, for every $t\in T$, and $\alpha\le\h(t)$, we denote by $t\restriction\alpha$ the unique element of $T_\alpha$ that is $<_T$-comparable with $t$.

Now, for every limit ordinal $\beta<\omega_1$, fix a surjection $\phi_\beta:\omega\rightarrow T_{\beta+\omega}$,
and then for every $n<\omega$, let $t_{\beta+n}:=\phi_\beta(n)\restriction(\beta+n)$.
To see that $\langle t_\alpha\mid\alpha<\omega_1\rangle$ is as sought, let $f\in\mathcal B(T)$.
For every limit $\beta<\omega_1$, as $f\restriction(\beta+\omega)$ is in $T$, we may find some $n<\omega$ such that $\phi_\beta(n)=f\restriction(\beta+\omega)$.
Consequently, $$f\restriction(\beta+n)=\phi_\beta(n)\restriction(\beta+n)=t_{\beta+n}.$$
It follows that for some stationary $B\s\omega_1$ and some $n'<\omega$, $G(f)\supseteq\{ \beta+n'\mid\beta\in B\}$. In particular, $G(f)$ is uncountable.
\end{proof}
\subsection{Additional ways to get diamond}
Kunen proved that $\diamondsuit$ cannot be introduced by a ccc forcing, whereas Proposition~\ref{prop2} demonstrates that this is not the case with $\diamondsuit(T)$.
Another small ccc forcing that adds a diamond sequence for every ground model Kurepa tree is Cohen's forcing $\add(\omega,1)$.
The core of this fact can be restated combinatorially, as follows.
\begin{prop}\label{prop25} Suppose that $\mathbf T$ is a Kurepa tree.
If $\cov(\mathcal M)>|\mathcal B(\mathbf T)|$,\footnote{That is, if the real line cannot be covered by $|\mathcal B(\mathbf T)|$-many meager sets.}
then $\diamondsuit(\mathbf T)$ holds.
\end{prop}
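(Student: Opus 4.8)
The plan is to realize the required transversal as a single instance $\tau_{x^*}$ taken from a fixed $\omega^\omega$-indexed family $\langle\tau_x\mid x\in\omega^\omega\rangle$ of transversals of $\mathbf T$, designed so that for each branch $f$ the set $\{x\mid G(f)[\tau_x]\text{ is stationary}\}$ is comeager in Baire space. Since $\cov(\mathcal M)>|\mathcal B(\mathbf T)|$ precisely guarantees that fewer than $\cov(\mathcal M)$-many comeager subsets of $\omega^\omega$ have a common point (and $\cov(\mathcal M)$ may equally well be computed in $\omega^\omega$ as in $\mathbb R$), intersecting these comeager sets over all branches then produces the desired $x^*$, and $\tau_{x^*}$ witnesses $\diamondsuit(\mathbf T)$.

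Concretely, I would first fix a partition $\langle S_n\mid n<\omega\rangle$ of $\acc(\omega_1)$ into stationary sets and put $p(\delta):=n$ for $\delta\in S_n$; for each $\delta<\omega_1$ fix a surjection $e_\delta\colon\omega\to T_\delta$ (each $T_\delta$ being a nonempty countable set, where $f\restriction\delta$, for $f\in\mathcal B(\mathbf T)$, denotes the node of $f$ at level $\delta$). Then, for $x\in\omega^\omega$, let $\tau_x(\delta):=e_\delta(x(p(\delta)))$ for $\delta\in\acc(\omega_1)$ and $\tau_x(\alpha):=e_\alpha(0)$ otherwise. The design principle here — and, I expect, the only genuinely delicate point of the whole argument — is that $\tau_x(\delta)$ must depend on $x$ through a \emph{single} coordinate $x(p(\delta))$, with the index $p(\delta)$ running through all of $\omega$ along a partition of $\acc(\omega_1)$ into stationary pieces.

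Next, fix $f\in\mathcal B(\mathbf T)$ and set $\chi_f(\delta):=\min\{k\mid e_\delta(k)=f\restriction\delta\}$. One checks that $\{\delta\in\acc(\omega_1)\mid x(p(\delta))=\chi_f(\delta)\}\s G(f)[\tau_x]$, and, as the $S_n$ partition $\acc(\omega_1)$, that this set equals $\bigcup_{n<\omega}\bigl(S_n\cap\chi_f^{-1}(x(n))\bigr)$. A countable union of non-stationary sets is non-stationary, so $G(f)[\tau_x]$ is stationary whenever $S_n\cap\chi_f^{-1}(x(n))$ is stationary for some $n$; and for each fixed $n$, since $S_n=\bigsqcup_{k<\omega}\bigl(S_n\cap\chi_f^{-1}(k)\bigr)$ is stationary, the set $G_{f,n}:=\{k<\omega\mid S_n\cap\chi_f^{-1}(k)\text{ stationary}\}$ is nonempty. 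Therefore $N_f:=\prod_{n<\omega}(\omega\setminus G_{f,n})$ — the set of $x$ for which this sufficient condition fails — is closed with empty interior in $\omega^\omega$ (each factor is a proper subset of $\omega$), hence nowhere dense; this is exactly where ``a stationary set is not a countable union of non-stationary sets'' is cashed in.

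Finally, $\bigcup_{f\in\mathcal B(\mathbf T)}N_f$ is a union of $|\mathcal B(\mathbf T)|<\cov(\mathcal M)$-many meager subsets of $\omega^\omega$, hence proper; any $x^*$ outside it gives $G(f)[\tau_{x^*}]$ stationary for every $f\in\mathcal B(\mathbf T)$, as required. I would close by noting that this same computation is the promised ``combinatorial core'' of the Cohen-forcing remark: each $N_f$ has a Borel code in the ground model, and a single Cohen real, avoiding every ground-model meager set, lands outside all of them; since $\add(\omega,1)$ is ccc it adds no branches to $\mathbf T$ and preserves the stationarity of the relevant sets, so evaluating the ground-model family $\langle\tau_x\rangle$ at the Cohen real yields a $\diamondsuit(\mathbf T)$-sequence in the extension, for every ground-model Kurepa tree $\mathbf T$.
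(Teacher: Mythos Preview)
Your proof is correct and follows essentially the same approach as the paper's: partition $\omega_1$ into $\omega$-many stationary pieces, enumerate each level, parameterize transversals by $x\in\omega^\omega$ via $\alpha\mapsto e_\alpha(x(p(\alpha)))$, and use $\cov(\mathcal M)>|\mathcal B(\mathbf T)|$ to find a parameter avoiding all the ``bad'' sets. The only presentational difference is that the paper, rather than exhibiting the nowhere-dense sets $N_f$ directly, picks a single representative $r_f(n)\in G_{f,n}$ for each $n$ and then invokes the standard combinatorial characterization of $\cov(\mathcal M)$ (\cite[Lemma~2.4.2]{MR1350295}) to obtain a $g$ with $g\cap r_f\neq\emptyset$ for every $f$; your version is the unpacked form of that citation.
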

\begin{proof} For every $\alpha<\omega_1$, fix an enumeration $\langle t^n_\alpha \mid n<\omega\rangle$ of $T_\alpha$.
Fix a partition $\langle S_i\mid i<\omega\rangle$ of $\omega_1$ into stationary sets,
and let $\pi:\omega_1\rightarrow\omega$ be such that $\pi[S_i]=\{i\}$ for every $i<\omega$.
For each $f\in\mathcal B(\mathbf T)$, define a map $r_f:\omega\rightarrow\omega$ via:
$$r_f(i):=\min\{n<\omega\mid\{\alpha\in S_i\mid f\restriction\alpha=t_\alpha^n\}\text{ is stationary}\}.$$

Finally, assuming $\cov(\mathcal M)>|\mathcal B(\mathbf T)|$, by \cite[Lemma~2.4.2]{MR1350295},
we may fix a function $g:\omega\rightarrow\omega$ such that $r_f\cap g\neq\emptyset$ for every $f\in\mathcal B(\mathbf T)$.
Then the transversal $\langle t_\alpha^{g(\pi(\alpha))} \mid\alpha < \omega_1\rangle$ witnesses $\diamondsuit(\mathbf T)$.
\end{proof}
\begin{remark} $\diamondsuit^+$ implies that $\diamondsuit(\mathbf T)$ holds for some Kurepa tree $\mathbf T$ with $\cov(\mathcal M)<|\mathcal B(\mathbf T)|$.
In Corollary~\ref{cor45} below, we get a model in which $\diamondsuit(\mathbf T)$ holds for some Kurepa tree $\mathbf T$ with $\cov(\mathcal M)=|\mathcal B(\mathbf T)|$.
\end{remark}
\begin{remark} A proof similar to that of Proposition~\ref{prop25} shows that $\diamondsuit(\mathbf T)$ holds for every Kurepa tree $\mathbf T$ with $\mathfrak d_{\omega_1}>|\mathcal B(\mathbf T)|$.
\end{remark}
Our next task is proving Theorem~\ref{thmb}. For this, let us recall the following definition.
\begin{defn}A tree $\mathbf T=(T,<_T)$ of height $\omega_1$ is: \begin{itemize}
\item a \emph{weak Kurepa tree} iff $|T|=\aleph_1<|\mathcal B(\mathbf T)|$;
\item \emph{thick} iff $|\mathcal B(\mathbf T)|=2^{\aleph_1}$.
\end{itemize}
\end{defn}

For example, $\ch$ implies that ${({}^{<\omega_1}2,{\subsetneq})}$ is a thick weak Kurepa tree.

\begin{prop}\label{prop28} Suppose that $\mathbf T$ is a Kurepa tree, $\mathbf S$ is a weak Kurepa tree,
and $|\mathcal B(\mathbf T)|<|\mathcal B(\mathbf S)|$. Then $\diamondsuit(\mathbf T)$ holds.
\end{prop}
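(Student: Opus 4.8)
The plan is to build the transversal $\langle t_\alpha\mid\alpha<\omega_1\rangle$ for $\mathbf T$ by encoding, at each $\alpha$, a guess at a branch of $\mathbf S$, exploiting the fact that $\mathbf S$ has more branches than $\mathbf T$. First I would fix enumerations $\langle t^n_\alpha\mid n<\omega\rangle$ of each level $T_\alpha$, and similarly work with the binary tree $S$ (invoking Lemma~\ref{lemma21}, if needed, to assume $\mathbf S$ is binary with $|S|=\aleph_1$; its club-isomorphism preserves $\mathcal B$ up to size). The key observation is that, since $|S|=\aleph_1$, we may reindex so that $S\s{}^{<\omega_1}2$ has underlying set $\omega_1$, and then each branch $b\in\mathcal B(\mathbf S)$ is coded by a real $x_b\in{}^\omega2$ via a fixed countable elementary scaffolding; more to the point, distinct branches of $\mathbf S$ restricted below a club of levels are distinct, so the map $b\mapsto b$ is already injective on a club.

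The heart of the argument: for $f\in\mathcal B(\mathbf T)$, define $r_f:\omega\rightarrow\omega$ just as in Proposition~\ref{prop25}, using a fixed partition $\langle S_i\mid i<\omega\rangle$ of $\omega_1$ into stationary sets and $\pi:\omega_1\to\omega$ with $\pi[S_i]=\{i\}$, by
$$r_f(i):=\min\{n<\omega\mid\{\alpha\in S_i\mid f\restriction\alpha=t^n_\alpha\}\text{ is stationary}\}.$$
This is well-defined since each $S_i$ is stationary and the $t^n_\alpha$ exhaust $T_\alpha$. Now $f\mapsto r_f$ maps $\mathcal B(\mathbf T)$ into ${}^\omega\omega$, so its range $R:=\{r_f\mid f\in\mathcal B(\mathbf T)\}$ has size $\le|\mathcal B(\mathbf T)|<|\mathcal B(\mathbf S)|$. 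The idea is to use the branches of $\mathbf S$ as a reservoir of "generic" reals that dodge $R$: since $\mathbf S$ is weak Kurepa, $|\mathcal B(\mathbf S)|>|\mathcal B(\mathbf T)|\ge\aleph_1$, so one can assign to each node of $S$ enough branching data that a single transversal of $\mathbf S$ — or rather, a clever diagonal read off from $\mathbf S$ — produces a $g:\omega\to\omega$ with $g\cap r_f\ne\emptyset$ for every $f$. Concretely, I would first establish a combinatorial lemma: if $\mathbf S$ is weak Kurepa and $R\s{}^\omega\omega$ has $|R|<|\mathcal B(\mathbf S)|$, then there is $g\in{}^\omega\omega$ with $g\cap r\ne\emptyset$ for all $r\in R$; this follows because for each $r\in R$ the set of branches "compatible" with avoiding $r$ is a proper subtree removing only $\le\aleph_1$ branches — no wait, that overcounts. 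The correct route is: fix an injection of $\mathcal B(\mathbf S)$-many branches into ${}^\omega\omega$ via a continuous reading of finite initial data along a fixed cofinal $\omega$-sequence of levels; for each $r\in R$, "most" branches $b$ satisfy $x_b\cap r\ne\emptyset$ unless $x_b$ globally evades $r$, and the set of reals in ${}^\omega\omega$ evading a \emph{fixed} $r$ is a single closed nowhere-dense-like set of size continuum, which could be large — so instead I appeal to the standard fact (as in the $\cov(\mathcal M)$ argument) that what we really need is a \emph{single} $g$ meeting every $r\in R$, and the branches of $\mathbf S$ furnish more than $|R|$-many candidates, so by a counting/pigeonhole over the tree structure one of the branch-derived functions works after a suitable pre-processing.

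Given such a $g:\omega\to\omega$, the transversal $\langle t^{g(\pi(\alpha))}_\alpha\mid\alpha<\omega_1\rangle$ witnesses $\diamondsuit(\mathbf T)$: for any $f\in\mathcal B(\mathbf T)$, pick $i<\omega$ with $g(i)=r_f(i)$; then by definition of $r_f$, the set $\{\alpha\in S_i\mid f\restriction\alpha=t^{r_f(i)}_\alpha\}$ is stationary, and on $S_i$ we have $\pi(\alpha)=i$, so $t^{g(\pi(\alpha))}_\alpha=t^{g(i)}_\alpha=t^{r_f(i)}_\alpha$ there; hence $G(f)$ is stationary, exactly as in Proposition~\ref{prop25}. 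The main obstacle — and the step requiring genuine care — is the combinatorial lemma extracting the single function $g$ meeting all of $R$ from the weak Kurepa tree $\mathbf S$; the $\cov(\mathcal M)$ proof got this from a cardinal-characteristic inequality, but here the only resource is the raw inequality $|\mathcal B(\mathbf T)|<|\mathcal B(\mathbf S)|$, so I expect the argument to run: thin out $\mathbf S$ to a normal binary subtree, read each branch as an element of ${}^\omega\omega$ along a club of levels where the tree "splits a lot," observe that $|R|$-many constraints of the form "$r\in R$" can each be satisfied by restricting to a cofinal subtree, and since fewer than $|\mathcal B(\mathbf S)|$ cofinal subtrees are removed in total while each removes at most $|R|\cdot\aleph_0<|\mathcal B(\mathbf S)|$ branches, a surviving branch yields the desired $g$. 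Making the bookkeeping of "cofinal subtree per constraint" precise — ensuring the intersection is still nonempty, which needs $|R|$ to be countable-indexed or the tree to have the right closure — is where I would spend the most effort.
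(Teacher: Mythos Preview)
Your overall framework is right and matches the paper's: enumerate the levels of $T$, partition $\omega_1$ into stationary pieces, define the functions $r_f$, find a single $g$ meeting every $r_f$, and read off the transversal $\langle t_\alpha^{g(\pi(\alpha))}\mid\alpha<\omega_1\rangle$. The final verification that such a $g$ yields a $\diamondsuit(\mathbf T)$-sequence is correct.

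The gap is exactly where you suspected. The ``combinatorial lemma'' you want---that $|R|<|\mathcal B(\mathbf S)|$ alone guarantees a single $g\in{}^\omega\omega$ with $g\cap r\neq\emptyset$ for every $r\in R$---is false. For a fixed $r$, the set $\{g\in{}^\omega\omega\mid g\cap r=\emptyset\}$ is closed nowhere dense, so the existence of such a $g$ is \emph{equivalent} to $|R|<\cov(\mathcal M)$, which is the hypothesis of Proposition~\ref{prop25}, not the one here. Your counting sketches do not circumvent this: encoding branches of $\mathbf S$ as elements of ${}^\omega\omega$ gives you many candidate $g$'s, but the set of reals avoiding a fixed $r$ has size $2^{\aleph_0}$, so a single constraint can kill all of your candidates at once; there is no ``remove at most $|R|\cdot\aleph_0$ branches per constraint'' bound.

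The paper's fix is to change the index set from $\omega$ to $\omega_1$: partition $\omega_1$ into $\aleph_1$-many stationary pieces $\langle S_i\mid i<\omega_1\rangle$, so that each $r_f$ is now a function $\omega_1\to\omega$. The weak Kurepa tree $\mathbf S$, having at least $\kappa:=|\mathcal B(\mathbf T)|^+$ branches, is used via \cite[Corollary~7.18]{paper53} to obtain the principle $\onto(\{\aleph_1\},J^{\bd}[\kappa],\aleph_0)$: a coloring $c:\omega_1\times\kappa\to\omega$ such that for every $B\in[\kappa]^\kappa$ there is $i<\omega_1$ with $c[\{i\}\times B]=\omega$. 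Setting $g_\beta(i):=c(i,\beta)$ gives $\kappa$-many candidates in ${}^{\omega_1}\omega$; a pigeonhole argument (if no $g_\beta$ met every $r_f$, then a single $r$ would be avoided by $\kappa$-many $g_\beta$'s, contradicting the onto property at some coordinate $i$) produces the desired $g_\beta$. The extra $\omega_1$-many coordinates are essential: they are what lets the Kurepa-type structure of $\mathbf S$ be converted into the colouring $c$, and with only $\omega$ coordinates no such conversion is available.
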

\begin{proof} By \cite[Corollary~7.18]{paper53}, if $\kappa$ is a regular uncountable cardinal and there is a Kurepa tree with (at least) $\kappa$-many branches,
then $\onto(\{\aleph_1\},\allowbreak J^{\bd}[\kappa],\aleph_0)$ holds.
The same proof works equally well against weak Kurepa trees.
Therefore, $\kappa:=|\mathcal B(\mathbf T)|^+$ is a regular uncountable cardinal such that $\onto(\{\aleph_1\},\allowbreak J^{\bd}[\kappa],\aleph_0)$ holds.
This means that we may fix a map $c:\omega_1\times\kappa\rightarrow\omega$ such that for every $B\in[\kappa]^\kappa$,
there exists $i<\omega_1$ such that $c[\{i\}\times B]=\omega$.
For each $\beta<\kappa$, define $g_\beta:\omega_1\rightarrow\omega$ via $g_\beta(i):=g(i,\beta)$.
It is easy to check that for every $\mathcal R\in[{}^{\omega_1}\omega]^{<\kappa}$,
there exists some $\beta<\kappa$ such that $r\cap g_\beta\neq\emptyset$ for every $r\in\mathcal R$.
From this point on, the proof continues the same way as that of Proposition~\ref{prop25},
where the only change is that we fix a partition $\langle S_i\mid i<\omega_1\rangle$ into $\aleph_1$-many stationary sets
and so each of the $r_f$'s is now a function from $\omega_1$ to $\omega$.
\end{proof}
\begin{cor} $\ch$ implies $\diamondsuit(\mathbf T)$ for every non-thick Kurepa tree $\mathbf T$.\qed
\end{cor}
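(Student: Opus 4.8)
The plan is to derive this as an immediate instance of Proposition~\ref{prop28}. The only ingredient needed is a concrete weak Kurepa tree carrying the largest conceivable number of branches. Under $\ch$, the full binary tree $\mathbf S:=({}^{<\omega_1}2,{\subsetneq})$ is such a tree: its underlying set has size $\sum_{\alpha<\omega_1}2^{|\alpha|}=\aleph_1$ (here $\ch$ is used to bound each term by $\aleph_1$), so $|S|=\aleph_1$, while $\mathcal B(\mathbf S)={}^{\omega_1}2$ has size $2^{\aleph_1}$; that is, $\mathbf S$ is a thick weak Kurepa tree, exactly as recorded in the remark preceding Proposition~\ref{prop28}.

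Now let $\mathbf T$ be an arbitrary non-thick Kurepa tree. By definition of non-thickness, $|\mathcal B(\mathbf T)|<2^{\aleph_1}=|\mathcal B(\mathbf S)|$, so Proposition~\ref{prop28} applies verbatim with this choice of $\mathbf S$ and yields $\diamondsuit(\mathbf T)$. I do not expect any genuine obstacle here: all the combinatorial content sits inside Proposition~\ref{prop28} (which in turn rests on the $\onto$-principle from \cite{paper53}), and the role of $\ch$ is merely to provide a single weak Kurepa tree $\mathbf S$ of maximal branch-size, against which every non-thick Kurepa tree automatically has strictly fewer branches. The one point worth stating explicitly in the write-up is the cardinal arithmetic computation $|{}^{<\omega_1}2|=\aleph_1$ under $\ch$, but this is routine.
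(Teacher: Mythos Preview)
Your argument is correct and matches the paper's intended reasoning: the corollary is stated with an immediate \qed, and the setup directly preceding it (the remark that under $\ch$ the tree $({}^{<\omega_1}2,{\subsetneq})$ is a thick weak Kurepa tree, together with Proposition~\ref{prop28}) is exactly what you invoke.
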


\section{Indestructible diamonds}\label{sec3} We would like to show that if the universe is close to being constructible, then there exists a Kurepa tree on which $\diamondsuit$ holds. 
The construction of such a tree relies on the concept of a \emph{sealed} Kurepa tree:
\begin{defn}[Hayut-M\"uller, \cite{muellerhayut}] A Kurepa tree $\mathbf T$ is \emph{sealed} if for every notion of forcing $\mathbb P$ that preserves both $\omega_1$ and $\omega_2$,
$\mathbb P$ does not add a new branch through $\mathbf T$.
\end{defn}

\begin{fact}[Po\'or-Shelah, {\cite[\S4]{poorshelah}}]\label{poor-shelah} If $\omega_1=\omega_1^{\mathsf{L}[A]}$ and $\omega_2=\omega_2^{\mathsf{L}[A]}$ for some $A\s\omega_1$, 
then there exists a Kurepa tree $\mathbf T$ such that $\mathcal B(\mathbf T)\s\mathsf{L}[A]$.\footnote{This was generalized by Hayut and M\"uller \cite[Lemma~15]{muellerhayut} to any successor of a regular uncountable cardinal $\kappa$ such that $\kappa^+=(\kappa^+)^{\mathsf{L}}$.}
\end{fact}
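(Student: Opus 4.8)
The plan is to work inside the inner model $L[A]$, where $A\s\omega_1$, and build there a Kurepa tree rigid enough to survive the passage to $V$. Recall that $L[A]$ satisfies $\ch$, $2^{\aleph_1}=\aleph_2$ and $\diamondsuit^+$; that every $X\s\omega_1$ lying in $L[A]$ already appears in $L_{\omega_2}[A]$ (collapse a size-$\aleph_1$ elementary submodel of some $L_\gamma[A]$ containing $\omega_1\cup\{A,X\}$), so that $|\mathcal P(\omega_1)|=\aleph_2$ there; and that $L_\beta[A]$ is countable whenever $\beta<\omega_1$. It suffices to produce, inside $L[A]$, a Kurepa tree $\mathbf T$ that is \emph{sealed relative to $L[A]$}: no outer model with the same $\omega_1$ and $\omega_2$ as $L[A]$ has a cofinal branch through $\mathbf T$ lying outside $L[A]$. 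Granting this, our hypothesis says precisely that $V$ is such an outer model, so $\mathcal B(\mathbf T)^V=\mathcal B(\mathbf T)^{L[A]}\s L[A]$; since $\omega_1$ is preserved the levels of $\mathbf T$ remain countable, and since $\omega_2$ is preserved the $\ge\aleph_2$ branches of $\mathbf T$ present in $L[A]$ still witness $|\mathcal B(\mathbf T)|\ge\aleph_2$ in $V$. Hence $\mathbf T$ is a Kurepa tree in $V$ with $\mathcal B(\mathbf T)\s L[A]$.

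For the construction I would run the classical derivation of a Kurepa tree from $\diamondsuit^+$, feeding it the \emph{canonical} $\diamondsuit^+$-sequence of $L[A]$, which we may take of the form $\langle\mathcal A_\alpha\mid\alpha<\omega_1\rangle$ with $\mathcal A_\alpha={}^\alpha2\cap L_{\tau_\alpha}[A]$, where $\langle\tau_\alpha\mid\alpha<\omega_1\rangle$ is a strictly increasing continuous sequence of countable ordinals, defined intrinsically from the $L[A]$-hierarchy and fine-structurally ``good'': each $L_{\tau_\alpha}[A]$ models a large enough fragment of $\zfc$, has $\alpha$ for its largest cardinal, and is generated by its Skolem functions from $\alpha\cup\{p_\alpha\}$, $p_\alpha$ being its standard parameter. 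Put $\mathbf T:=\{x\in{}^{<\omega_1}2\mid x\restriction\beta\in\mathcal A_\beta\text{ for every }\beta\le\dom(x)\}$; this is a downward-closed subtree of $({}^{<\omega_1}2,{\subsetneq})$, and since $|\mathcal A_\alpha|=\aleph_0$ it is a binary $\aleph_1$-tree. Because the $\diamondsuit^+$-sequence guesses every $f\in{}^{\omega_1}2\cap L[A]$ on a club, each such $f$ determines, after the usual interpolation across its guessing club, a distinct cofinal branch of $\mathbf T$; as there are $\aleph_2$-many such $f$, $\mathbf T$ is a binary Kurepa tree in $L[A]$. The one extra dividend of canonicity is coherence: for $\alpha<\alpha'$ there are canonical elementary maps $j_{\alpha,\alpha'}\colon L_{\tau_\alpha}[A]\to L_{\tau_{\alpha'}}[A]$ fixing $\alpha$ pointwise, together with $j_\alpha\colon L_{\tau_\alpha}[A]\to L_{\omega_2}[A]$ satisfying $j_\alpha\restriction\alpha=\mathrm{id}$ and $j_{\alpha'}\circ j_{\alpha,\alpha'}=j_\alpha$.

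The heart of the matter --- and the step I expect to be the main obstacle --- is establishing the seal. Let $c$ be a cofinal branch through $\mathbf T$ in an outer model with the same $\omega_1$ and $\omega_2$; we must show $c\in L[A]$. For every $\alpha$ we have $c\restriction\alpha\in\mathcal A_\alpha$, hence $c\restriction\alpha\in L_{\tau_\alpha}[A]$. Fix $\alpha'$: since $L_{\tau_{\alpha'}}[A]$ is generated by its Skolem functions from $\alpha'\cup\{p_{\alpha'}\}$, both $\alpha'$ and $c\restriction\alpha'$ are definable there from $p_{\alpha'}$ and finitely many ordinals below $\alpha'$; so for every sufficiently large $\alpha<\alpha'$ lying in the club of Skolem-closure points of $L_{\tau_{\alpha'}}[A]$ below $\alpha'$, the hull $\mathrm{Hull}(L_{\tau_{\alpha'}}[A],\alpha\cup\{p_{\alpha'}\})$ contains $\alpha'$ and $c\restriction\alpha'$ while meeting $\alpha'$ in exactly $\alpha$. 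Collapsing this hull --- which, by canonicity, is isomorphic to $L_{\tau_\alpha}[A]$ --- sends $c\restriction\alpha'$ to $c\restriction\alpha$; that is, $j_{\alpha,\alpha'}(c\restriction\alpha)=c\restriction\alpha'$. A routine diagonalization upgrades this tail-wise agreement to a genuine thread: along a club $D$, the sequence $\langle c\restriction\alpha\mid\alpha\in D\rangle$ threads the directed system $\langle L_{\tau_\alpha}[A],\,j_{\alpha,\alpha'}\rangle_{\alpha<\alpha'\in D}$. The direct limit of this system is embedded into $L_{\omega_2}[A]$ by the $j_\alpha$, hence is wellfounded and equals $L_{\tau_\infty}[A]$ for some $\tau_\infty\le\omega_2$, and the thread designates an element $y$ of it with $y\restriction\alpha=c\restriction\alpha$ for club-many $\alpha$. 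Therefore $y=c$ and $c\in L_{\tau_\infty}[A]\s L[A]$, as required.

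All the difficulty lies in delivering a sequence $\langle\tau_\alpha\mid\alpha<\omega_1\rangle$ that is simultaneously a $\diamondsuit^+$-sequence over $L[A]$ --- so that enough (that is, $\aleph_2$-many) functions $\omega_1\to2$ reflect into the $L_{\tau_\alpha}[A]$'s to make $\mathbf T$ Kurepa --- and fine-structurally coherent in the strong sense invoked above (soundness of each $L_{\tau_\alpha}[A]$, together with the coherent condensation maps). These two requirements pull against one another --- richer reflection wants larger, wilder structures, tighter coherence wants tamer ones --- and reconciling them is exactly where the fine structure of $L[A]$, through acceptability and the condensation lemma, is brought to bear; this is the technical core of Po\'or--Shelah's construction.
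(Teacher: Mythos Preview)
The paper does not prove this statement: it is recorded as a \emph{Fact} with a bare citation to Po\'or--Shelah \cite[\S4]{poorshelah} (and a footnote pointing to the Hayut--M\"uller generalization), and no argument is given. There is therefore nothing in the paper to compare your proposal against.

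That said, your sketch is a reasonable outline of the kind of argument one finds in the cited source: build, inside $L[A]$, the canonical Kurepa tree coming from the fine-structural $\diamondsuit^+$-sequence, and then use coherence of the condensation maps $j_{\alpha,\alpha'}$ to show that any cofinal branch in an outer model preserving $\omega_1$ and $\omega_2$ must thread the directed system and hence already lie in $L[A]$. You have also correctly flagged where the real work sits --- namely in choosing the $\tau_\alpha$'s so that the structures $L_{\tau_\alpha}[A]$ are simultaneously sound (so the collapse maps are the canonical $j_{\alpha,\alpha'}$) and rich enough to witness $\diamondsuit^+$. One point to be careful about in a full write-up: the passage ``for every sufficiently large $\alpha<\alpha'$ \ldots\ collapsing this hull --- which, by canonicity, is isomorphic to $L_{\tau_\alpha}[A]$'' is exactly where acceptability and the fine-structural condensation lemma for $L[A]$ are doing the heavy lifting, and it is not automatic that the transitive collapse lands at $L_{\tau_\alpha}[A]$ rather than at some other $L_\gamma[A]$; the choice of the $\tau_\alpha$'s must be arranged so that this holds on a club. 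But as a roadmap your sketch is on target.
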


\begin{cor}[Giron-Hayut, \cite{giron2023sealed}] If $\omega_1=\omega_1^{\mathsf{L}[A]}$ and $\omega_2=\omega_2^{\mathsf{L}[A]}$ for some $A\s\omega_1$, then there exists a sealed Kurepa tree.\qed
\end{cor}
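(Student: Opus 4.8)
The plan is to verify that the Kurepa tree $\mathbf T$ produced by Fact~\ref{poor-shelah} is itself sealed. The first thing to note is that the Po\'or--Shelah construction is carried out inside $\mathsf L[A]$, so $\mathbf T\in\mathsf L[A]$, and its proof in fact delivers more than the bare statement of Fact~\ref{poor-shelah}: for \emph{every} outer model $W\supseteq\mathsf L[A]$ with $\omega_1^W=\omega_1^{\mathsf L[A]}$ and $\omega_2^W=\omega_2^{\mathsf L[A]}$, one has $\mathcal B(\mathbf T)^W\s\mathsf L[A]$. (The point is that the argument showing that a cofinal branch of $\mathbf T$ must already belong to $\mathsf L[A]$ uses nothing about the ambient universe beyond the fact that it computes $\omega_1$ and $\omega_2$ correctly over $\mathsf L[A]$.) Applied in $V$, this is exactly Fact~\ref{poor-shelah}; in particular $\mathbf T$ is a Kurepa tree in $V$.

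Now let $\mathbb P$ be a notion of forcing preserving both $\omega_1$ and $\omega_2$, and set $W:=V^{\mathbb P}$. Since $A$ lies in $V$, the inner model $\mathsf L[A]$ and the ordinals $\omega_1^{\mathsf L[A]},\omega_2^{\mathsf L[A]}$ are the same whether computed in $V$ or in $W$. Combining this with the hypotheses $\omega_1^V=\omega_1^{\mathsf L[A]}$, $\omega_2^V=\omega_2^{\mathsf L[A]}$ and with the fact that $\mathbb P$ preserves $\omega_1$ and $\omega_2$, we get $\omega_1^W=\omega_1^{\mathsf L[A]}$ and $\omega_2^W=\omega_2^{\mathsf L[A]}$. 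Hence the strengthened form of Fact~\ref{poor-shelah} applies in $W$ to this same $\mathbf T$ and gives $\mathcal B(\mathbf T)^W\s\mathsf L[A]\s V$.

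It remains to conclude that $\mathbb P$ adds no new branch. Because $\omega_1$ is preserved and $\mathbf T$ is fixed, membership in $\mathcal B(\mathbf T)$ is absolute between $V$ and $W$ for any object lying in both models; together with $\mathcal B(\mathbf T)^W\s V$ this yields $\mathcal B(\mathbf T)^W=\mathcal B(\mathbf T)^V$, so indeed no branch through $\mathbf T$ is added. Thus $\mathbf T$ witnesses the corollary.

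The main obstacle is the very first step. As literally stated, Fact~\ref{poor-shelah} only asserts the existence of a branch-catching Kurepa tree, whereas the forcing argument requires the particular tree it produces to be an element of $\mathsf L[A]$ and, crucially, requires its branch-catching property to hold verbatim in every $\omega_1$- and $\omega_2$-preserving extension of $V$. Justifying this uniform version --- i.e.\ inspecting the Po\'or--Shelah proof and checking that it never invokes anything about the surrounding universe other than the correct computation of $\omega_1$ and $\omega_2$ --- is where the actual content sits; the remainder is routine absoluteness bookkeeping.
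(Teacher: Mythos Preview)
Your proposal is correct and follows exactly the line the paper intends: the corollary is stated with a bare \qed\ and no proof, and the argument you give---that the Po\'or--Shelah tree is built inside $\mathsf L[A]$ and the verification that its branches lie in $\mathsf L[A]$ goes through in any outer model computing $\omega_1$ and $\omega_2$ correctly---is precisely the reasoning the paper spells out one result later in the proof of Corollary~\ref{cor44}. Your explicit flagging of the need for the uniform (outer-model) version of Fact~\ref{poor-shelah} is apt; the paper handles this the same way, simply noting that ``the construction of $\mathbf T$ takes place inside $\mathsf L[A]$.''
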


\begin{cor}\label{cor44}
Suppose that $V=\mathsf{L}[A]$ for some $A\s\omega_1$.
Then there exists a binary Kurepa tree $T$ such that $\diamondsuit(T)$ holds in any forcing extension preserving $\omega_1$, $\omega_2$, and the stationary subsets of $\omega_1$.
\end{cor}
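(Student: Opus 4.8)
The plan is to take $T$ to be a binary \emph{sealed} Kurepa tree living in $V=\mathsf{L}[A]$ and to observe that, for such a $T$, a ground-model $\diamondsuit(T)$-sequence survives any forcing of the prescribed type for almost-trivial reasons. To produce $T$: since $V=\mathsf{L}[A]$ we trivially have $\omega_1=\omega_1^{\mathsf{L}[A]}$ and $\omega_2=\omega_2^{\mathsf{L}[A]}$, so the Giron--Hayut corollary supplies a sealed Kurepa tree, and I would pass first to the Hausdorff tree in which it cofinally embeds (as described after Lemma~\ref{lemma21}) and then, via Lemma~\ref{lemma21}, to a club-isomorphic binary tree $T$, thereby obtaining a binary Kurepa tree. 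The nontrivial point is that $T$ inherits sealedness: each of these two passages carries a ground-model map inducing a bijection between the branch set of the new tree and that of the old one --- for the club-isomorphism, restrict a cofinal branch to the levels lying in the witnessing club $D$ and use that in a binary tree the resulting cofinal chain has a unique cofinal extension --- and such a bijection is computed identically in any $\omega_1$-preserving forcing extension. Consequently, a forcing preserving $\omega_1$ and $\omega_2$ adds a new branch to $T$ if and only if it adds one to the original sealed tree, i.e., never.

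Next, since $V=\mathsf{L}[A]$ with $A\s\omega_1$ we have $\diamondsuit$, hence (by the implications recorded in Section~\ref{sec2}) $\diamondsuit(T)$; I would then fix a transversal $\vec t=\langle t_\alpha\mid\alpha<\omega_1\rangle\in\prod_{\alpha<\omega_1}T_\alpha$ witnessing it. Now let $\mathbb P$ be any notion of forcing preserving $\omega_1$, $\omega_2$, and the stationary subsets of $\omega_1$, let $G\s\mathbb P$ be generic over $V$, and work in $V[G]$. By sealedness of $T$ and preservation of $\omega_1$ and $\omega_2$, $\mathcal B(T)^{V[G]}=\mathcal B(T)^{V}$; combined again with the preservation of $\omega_1$ and $\omega_2$, this shows $T$ is still a binary Kurepa tree in $V[G]$. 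Finally, take any $f\in\mathcal B(T)^{V[G]}$; then $f\in\mathcal B(T)^{V}$, so $G(f)=\{\alpha<\omega_1\mid f\restriction\alpha=t_\alpha\}$ was stationary in $V$, and therefore remains stationary in $V[G]$ by the choice of $\mathbb P$. Thus $\vec t$ witnesses $\diamondsuit(T)$ in $V[G]$, as desired.

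The one step I expect to require genuine (though still routine) care is the transfer of sealedness in the first paragraph: one must verify that the branch-set bijections attached to the cofinal embedding into a Hausdorff tree and to the club-isomorphism of Lemma~\ref{lemma21} are truly absolute between $V$ and an arbitrary $\omega_1$-preserving extension. Everything else follows from the definitions together with the standard facts that sealed trees acquire no new branches under $\omega_1,\omega_2$-preserving forcing and that stationarity is preserved by assumption.
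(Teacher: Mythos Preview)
Your proof is correct and follows essentially the same route as the paper's. The only cosmetic difference is that the paper cites Fact~\ref{poor-shelah} (Po\'or--Shelah) directly---using that $\mathcal B(\mathbf T)\subseteq\mathsf{L}[A]$ together with the absoluteness of $\mathsf{L}[A]$ to conclude no new branches appear---rather than the derived Giron--Hayut corollary on sealed trees; this slightly streamlines the step you flagged about transferring sealedness through the tree translations, since ``all branches lie in $\mathsf{L}[A]$'' is manifestly re-verifiable in the extension without tracking bijections.
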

\begin{proof}
Let $\mathbf T$ be a tree as in Fact~\ref{poor-shelah}.
It can be verified that $\mathbf T$ is Hausdorff,
but, regardless, as described right after Lemma~\ref{lemma21},
there is a Hausdorff tree $\mathbf S$ and an order-preserving injection $\pi$ from $\mathbf T$ to $\mathbf S$ such that
$\pi[T_\alpha]=S_{\alpha+1}$ for every $\alpha<\omega_1$.
In particular, $\pi$ induces a bijective correspondence between $\mathcal B(\mathbf T)$ and $\mathcal B(\mathbf S)$,
meaning that we may as well assume that $\mathbf T$ is Hausdorff.
Next, as $A\s\omega_1$, it is the case that $\diamondsuit$ holds in $\mathsf{L}[A]$, so $\diamondsuit(\mathbf T)$ holds as well.
Suppose $\mathbb P$ is a notion of forcing preserving $\omega_1$ and $\omega_2$. The construction of $\mathbf T$ takes place inside $\mathsf{L}[A]$, so
$$V^{\mathbb P} \models \mathcal B(\mathbf T)\s\mathsf{L}[A].$$
As the definition of $\mathsf{L}[A]$ is absolute, all the branches through $\mathbf T$ from $V^{\mathbb P}$ are already in $V$.
Now, if $\mathbb P$ also preserves stationary subsets of $\omega_1$, then the witness to $\diamondsuit(\mathbf T)$ in $V$
will still witness $\diamondsuit(\mathbf T)$ in $V^{\mathbb P}$.
Finally, by running the translation procedure of Lemma~\ref{lemma21},
we obtain a binary Kurepa tree with the same key features as the Hausdorff tree $\mathbf T$.
\end{proof}

\begin{cor}\label{cor45} It is consistent that $2^{\aleph_0}=\aleph_2$, Martin's axiom holds, and $\diamondsuit(T)$ holds for some binary Kurepa tree $T$.
\end{cor}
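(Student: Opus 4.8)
The plan is to start from a model of $V=\mathsf{L}[A]$ for some $A\s\omega_1$ (e.g., $\mathsf{L}$ itself), which by Corollary~\ref{cor44} carries a binary Kurepa tree $T$ whose branch set is contained in $\mathsf{L}[A]$ and on which $\diamondsuit(T)$ holds. I would then force $2^{\aleph_0}=\aleph_2$ together with Martin's axiom in the usual way, via a finite support iteration $\mathbb P=\langle \mathbb P_\xi,\dot{\mathbb Q}_\xi\mid \xi<\omega_2\rangle$ of ccc forcings of size $\le\aleph_1$, bookkeeping all ccc posets of size $<\aleph_2$ so that $\ma_{<\aleph_2}$ holds in $V^{\mathbb P}$. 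This is precisely the Solovay--Tennenbaum template, and $\mathbb P$ is ccc of size $\aleph_2$, hence preserves cardinals and cofinalities, preserves $\omega_1$ and $\omega_2$, and forces $2^{\aleph_0}=2^{\aleph_1}=\aleph_2$.

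The key point is that the ground-model witness to $\diamondsuit(T)$ survives. First, since $\mathbb P$ preserves $\omega_1$ and $\omega_2$ and the construction of $T$ is carried out inside $\mathsf{L}[A]$, the branch absoluteness argument of Corollary~\ref{cor44} applies verbatim: in $V^{\mathbb P}$ we still have $\mathcal B(T)\s\mathsf{L}[A]$ by the absoluteness of the definition of $\mathsf{L}[A]$, so $T$ acquires no new branches and remains a binary Kurepa tree with $|\mathcal B(T)|\ge\aleph_2$ (indeed $=\aleph_1^{+}=\aleph_2$ in the extension, since $\mathcal B(T)\s V$ and $\mathsf{CH}$ held in $V$). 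Second, a ccc forcing preserves stationary subsets of $\omega_1$, so the ground-model transversal $\langle t_\alpha\mid\alpha<\omega_1\rangle$ still has the property that $G(f)=\{\alpha<\omega_1\mid f\restriction\alpha=t_\alpha\}$ is stationary for every $f\in\mathcal B(T)^{V}=\mathcal B(T)^{V^{\mathbb P}}$. Hence $\diamondsuit(T)$ holds in $V^{\mathbb P}$. Alternatively, one may invoke Proposition~\ref{prop23} at each stage of the iteration, but the direct appeal to Corollary~\ref{cor44} is cleaner since that corollary is stated exactly for forcings preserving $\omega_1$, $\omega_2$, and stationary subsets of $\omega_1$, all of which ccc forcing does.

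There is essentially no obstacle here; the corollary is a soft combination of Corollary~\ref{cor44} with the standard consistency proof of $\ma+\neg\ch$. The one point that warrants a sentence of care is making sure the iteration does not inadvertently add a branch to $T$ through some ccc iterand $\dot{\mathbb Q}_\xi$ appearing along the way: this is handled not by examining individual iterands but globally, by the $\mathsf{L}[A]$-absoluteness of $\mathcal B(T)$, which traps all branches in $V$ no matter what ccc forcing is performed. I would therefore write the proof as: fix $V=\mathsf{L}$, let $T$ be the binary Kurepa tree given by Corollary~\ref{cor44}, let $\mathbb P$ be a finite-support ccc iteration of length $\omega_2$ forcing $\ma+2^{\aleph_0}=\aleph_2$, observe $\mathbb P$ preserves $\omega_1$, $\omega_2$, and stationarity, and conclude by Corollary~\ref{cor44} that $\diamondsuit(T)$ holds in $V^{\mathbb P}$, while $\ma$ and $2^{\aleph_0}=\aleph_2$ hold by the choice of $\mathbb P$.
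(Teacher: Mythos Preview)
Your proposal is correct and follows essentially the same approach as the paper: work in $\mathsf{L}$, take the binary Kurepa tree $T$ from Corollary~\ref{cor44}, force $\ma+2^{\aleph_0}=\aleph_2$ by a ccc iteration, and observe that since ccc forcing preserves $\omega_1$, $\omega_2$, and stationary subsets of $\omega_1$, Corollary~\ref{cor44} yields $\diamondsuit(T)$ in the extension. Your write-up unpacks the branch-absoluteness mechanism in more detail than the paper does, but the argument and its ingredients are the same.
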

\begin{proof} Work in $\mathsf{L}$. By Corollary~\ref{cor44},
we may fix a binary Kurepa tree $T$ such that $\diamondsuit(T)$ holds in any forcing extension preserving $\omega_1$, $\omega_2$, and the stationary subsets of $\omega_1$.
Now, let $\mathbb P$ be some ccc notion of forcing such that in $\mathsf L^{\mathbb P}$, Martin's axiom holds and $2^{\aleph_0}=\aleph_2$.
As $\mathbb P$ preserves $\omega_1$, $\omega_2$, and the stationary subsets of $\omega_1$,
$\diamondsuit(T)$ holds in $\mathsf L^{\mathbb P}$.
\end{proof}

A Kurepa tree that is sealed for all proper forcings can also be added by forcing, and in fact such a notion of forcing was already devised by D.~H.~Stewart in his 1966 Master's thesis.
\begin{defn}[Stewart (see \cite{trees})]\label{Stewart} For a cardinal $\kappa>\aleph_1$,
the forcing $\mathbb{S}_\kappa$ consisting of all triples $(T_p,\epsilon_p,b_p)$ such that:
\begin{enumerate}
\item $\epsilon_p\in\acc(\omega_1)$;
\item $T_p$ is a countable downward-closed subfamily of ${}^{\le\epsilon_p}2$ such that $(T_p,{\subsetneq})$ is a normal tree of height $\epsilon_p+1$;
\item $ b_p$ is an injection from a countable subset of $\kappa$ to the top level of $T_p$,
\end{enumerate}
and the ordering is given by $q \le p$ iff
\begin{itemize}
\item $\epsilon_q\ge \epsilon_p$;
\item $T_q\supseteq T_p$ and $T_q\restriction(\epsilon_p+1)=T_p$;
\item $\dom(b_q)\supseteq\dom(b_p)$;
\item for every $\xi\in\dom(b_p)$, $b_q(\xi)\supseteq b_p(\xi)$.
\end{itemize}
\end{defn}

Note that $\mathbb S_\kappa$ is $\sigma$-closed, and that, assuming $\ch$, it has the $\aleph_2$-cc.
The following establishes Theorem~\ref{thma}.

\begin{prop}\label{prop47} Suppose $\kappa>\aleph_1$ is some cardinal.
In the forcing extension by $\mathbb S_\kappa$,
there exists a binary Kurepa tree $T$ such that $\diamondsuit(T)$ holds and no proper forcing adds a new branch through $T$, let alone kills diamond over it.
\end{prop}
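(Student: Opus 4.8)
The plan is to analyze the forcing $\mathbb{S}_\kappa$ in two stages, using that it is $\sigma$-closed (so it adds no new branches to ground-model $\aleph_1$-trees and preserves stationary subsets of $\omega_1$), and, assuming $\ch$ in the ground model (which we are free to assume, as in Corollary~\ref{cor45}, or we may first force $\ch$ by a $\sigma$-closed forcing and fold this into $\mathbb{S}_\kappa$), that it has the $\aleph_2$-cc, hence preserves all cardinals. First I would verify the standard facts that in $V^{\mathbb{S}_\kappa}$ the union $T:=\bigcup_p T_p$ is a binary $\aleph_1$-tree: the generic filter is directed and the coherence condition $T_q\restriction(\epsilon_p+1)=T_p$ guarantees that $T$ restricted to each level $\epsilon_p+1$ is exactly $T_p$, so each level is countable; a density argument shows $\epsilon_p$ ranges cofinally in $\omega_1$ so $T$ has height $\omega_1$. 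The functions $\xi\mapsto\bigcup\{b_p(\xi)\mid \xi\in\dom(b_p)\}$, for $\xi<\kappa$, give $\kappa$-many distinct cofinal branches (distinctness because the $b_p$ are injections and a density argument separates any two $\xi,\xi'$), so $T$ is Kurepa with $|\mathcal{B}(T)|=\kappa$; at this point we may replace $\kappa$ by $\aleph_2$ if desired, or keep it.

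Second, I would establish $\diamondsuit(T)$ in $V^{\mathbb{S}_\kappa}$. The key point is that $\mathbb{S}_\kappa$ is $\sigma$-closed, so $V$ and $V^{\mathbb{S}_\kappa}$ have the same reals and the same $\omega_1$; moreover, because $\mathbb S_\kappa$ adds no branches to ground-model trees and $T$ itself is generic, one shows that every branch of $T$ appearing in $V^{\mathbb S_\kappa}$ is essentially of the form ``the $\xi$-th generic branch'' for some $\xi<\kappa$ — indeed a condition can only commit itself to a node at level $\epsilon_p$, and any name for a branch is decided level-by-level by a descending sequence of conditions, which (by $\sigma$-closure and a fusion-style density argument) can be steered into agreeing with one of the indexed branches $b_\cdot(\xi)$ on a club. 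The cleanest route, however, is to observe that $\mathbb{S}_\kappa$ is a two-step-like iteration: forcing with $\mathbb S_\kappa$ is equivalent to first adding the tree $T$ by the $\sigma$-closed projection to the coordinates $(T_p,\epsilon_p)$ (Jensen's tree-adding forcing, which is known to add a $\diamondsuit$-sequence, hence a $\diamondsuit(S)$-sequence for every $\aleph_1$-tree $S$ in that extension), and then adding branches by a $\sigma$-closed branch-specialization forcing that, being $\sigma$-closed, preserves the diamond sequence and the stationary sets, while only enlarging $\mathcal B(T)$ past $\aleph_1$. So $\diamondsuit(T)$ holds in the full extension.

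Third comes the indestructibility clause, which I expect to be the main obstacle. We must show that if $\mathbb Q$ is proper in $V^{\mathbb S_\kappa}$, then $\mathbb Q$ adds no new cofinal branch through $T$; given that, $\diamondsuit(T)$ survives because properness preserves $\omega_1$ and stationary subsets of $\omega_1$, and the surviving transversal still guesses the unchanged branch-set $\mathcal B(T)$. The standard way to see $T$ is sealed is a chain-condition/genericity argument: $\mathbb S_\kappa$, being $\sigma$-closed and (under $\ch$) $\aleph_2$-cc, produces a tree $T$ that is ``Kurepa with the fewest possible branches'' and, crucially, whose branch structure is coded into $L[A]$-like absolute data, so no outer $\omega_1$- and $\omega_2$-preserving forcing can add a branch — and proper forcings preserve $\omega_1$, while a proper forcing that collapses $\omega_2$ destroys the Kurepa-ness of $T$ and makes $\diamondsuit(T)$ trivial (this is exactly the content of the footnote). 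Concretely, I would argue: suppose $\dot b$ is a $\mathbb Q$-name forced by some $q$ to be a new branch through $T$; build, inside a countable elementary submodel $M\prec \H_\theta$ with the relevant parameters, an $(M,\mathbb Q)$-generic condition $q'\le q$; then $q'$ decides $\dot b\cap M$, which is a branch through $T\restriction(M\cap\omega_1)$, i.e., a node of $T_{M\cap\omega_1}$ — but by genericity over $M$ and the definition of $\mathbb S_\kappa$ this node lies on one of the ground-model-indexed branches, contradicting that $\dot b$ is new. Filling in the ``which node'' step rigorously — i.e. proving the branch-absoluteness of $T$, equivalently that every cofinal branch of $T$ is $\mathbb S_\kappa$-generic-branch-like — is the technical heart, and it is here that one invokes either the Po\'or-Shelah style analysis (as packaged in the references around Fact~\ref{poor-shelah}) or a direct fusion argument showing $\mathbb S_\kappa$ forces $T$ to be sealed for all $\omega_1$-and-$\omega_2$-preserving, in particular all proper, forcings.
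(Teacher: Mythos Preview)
Your outline for the tree construction and for $\diamondsuit(T)$ is serviceable, though the two-step factorization is unnecessary: the paper simply invokes Baumgartner's theorem that any $\sigma$-closed forcing adding a new subset of $\omega_1$ forces full $\diamondsuit$, and $\diamondsuit$ trivially implies $\diamondsuit(T)$. (Your version has a slip: saying the second step ``preserves the diamond sequence'' is not enough, since the old sequence need not guess the \emph{new} branches; what saves you is that Baumgartner applies to the whole forcing at once.)

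The genuine gap is in the sealed-ness argument, which you yourself flag as the ``technical heart'' and then defer. Two problems. First, your concrete sketch is wrong: an $(M,\mathbb Q)$-generic condition $q'$ does \emph{not} decide $\dot b\restriction(M\cap\omega_1)$ for a proper $\mathbb Q$ --- that would require something like $\sigma$-closure, and the whole point is that $\mathbb Q$ is an arbitrary proper forcing in the extension. Second, the appeal to Po\'or--Shelah or $L[A]$-style absoluteness is misplaced: that analysis is for the tree of Fact~\ref{poor-shelah}, constructed inside $L[A]$; the $\mathbb S_\kappa$-generic tree has no such absoluteness and the argument must be different. The paper's missing idea is this: in $V[G]$ one shows that the set
\[
\mathcal S:=\bigl\{X\in[\kappa]^{\aleph_0}\;\bigm|\; X\cap\omega_1\in\acc(\omega_1)\text{ and }T_{X\cap\omega_1}=\{f_\xi\restriction(X\cap\omega_1)\mid\xi\in X\}\bigr\}
\]
is stationary in $[\kappa]^{\aleph_0}$, via a density argument in $\mathbb S_\kappa$ (build a decreasing $\omega$-sequence of conditions so that at the limit every top-level node is hit by some $b_q(\xi)$ with $\xi\in X$). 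Proper forcing preserves the stationarity of $\mathcal S$; then for any putative new branch $f$, pick $M\prec\H_\theta$ with $f\in M$ and $M\cap\kappa\in\mathcal S$, so $f\restriction(M\cap\omega_1)\in T_{M\cap\omega_1}=\{f_\xi\restriction(M\cap\omega_1)\mid\xi\in M\cap\kappa\}$, and elementarity forces $f=f_\xi$ for some $\xi\in M$. This combinatorial stationary set, not absoluteness or fusion, is what makes the argument go through.
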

\begin{proof} Suppose $G$ is $\mathbb S_\kappa$-generic.
Let $T:=\bigcup\{T_p\mid p\in G\}$ denote the generic binary tree added by $\mathbb{S}_\kappa$.
For each $\xi<\kappa$, let
$$f_\xi:=\{ t\in T\mid\exists p\in G\,(\xi\in\dom(b_p)\wedge t\s b_p(\xi)\}.$$
A density argument shows that $T$ is an $\aleph_1$-tree and that $\langle f_\xi\mid\xi<\kappa\rangle$ is an injective sequence of branches through it.
By a theorem of Baumgartner, any $\sigma$-closed forcing that adds a new subset of $\omega_1$ forces $\diamondsuit$ to hold (see \cite[Theorem~3.1]{sakai}), hence $\diamondsuit(T)$ hold in $V[G]$.

In \cite[p.~10]{trees}, Jech proved that $\{f_\xi\mid\xi< \kappa\}$ enumerates \emph{all} cofinal branches through $T$.
It is a folklore fact that this remains the case in any extension by a proper notion of forcing.
Hints of this fact may be found in \cite[Lemma~8.14]{MR776625} and \cite[Proposition~37]{MR2139743},
but for completeness, we give the details here.
Following \cite[Definition~1]{MR2139743}, we consider the following set:
$$\mathcal S:=\{X\in [\kappa]^{\aleph_0} \mid X\cap\omega_1\in\acc(\omega_1),\; T_{X\cap\omega_1}=\{f_\xi \restriction(X\cap\omega_1)\mid\xi\in X\}\}.$$
\begin{claim} $\mathcal S$ is stationary.
\end{claim}
\begin{proof} Back in $V$, pick a name $\dot{C}$ for a club in $[\kappa]^{\aleph_0}$, and a condition $p$.
Recursively construct a sequence $\langle(p_n,X_n)\mid n<\omega\rangle$ such that $(p_0,X_0):=(p,\omega)$ and, for every $n<\omega$:
\begin{itemize}
\item $p_{n+1}\le p_n$;
\item $\epsilon_{p_{n+1}}>\epsilon_{p_n}$;
\item $\dom(b_{p_{n+1}})\supseteq X_n$;
\item every element of $T_{{p_n}}$ is extended by some element of $\im(b_{p_{n+1}})$;
\item $X_{n+1}\in[\kappa]^{\aleph_0}$ with $X_{n+1}\supseteq X_n\cup\dom(b_{p_n})\cup(\sup(X_n\cap\omega_1)+1)$;
\item $p_{n+1}\Vdash X_{n+1}\in\dot{C}$.
\end{itemize}

Set $X:=\bigcup_{n<\omega}X_n$ and note that $X\cap\omega_1\in\acc(\omega_1)$. Define a condition $q$ by letting:
\begin{enumerate}
\item $\dom(b_q):=\bigcup_{n<\omega}X_n$;
\item for all $n<\omega$ and $\xi\in X_n$, $b_q(\xi):=\bigcup_{n<m<\omega}b_{p_m}(\xi)$;
\item $\epsilon_q:=\sup_{n<\omega}\epsilon_{p_n}$;
\item $T_q:=\bigcup_{n<\omega}T_{p_n}\cup\im(b_{q})$.
\end{enumerate}

It is clear that $q\le p_n$ for all $n<\omega$, and hence $q\Vdash X\in\dot{C}$.
In addition, Clause~(4) implies that $q\Vdash X\in\dot{S}$.
\end{proof}
Now, let $\mathbb{Q}$ be any proper forcing in $V[G]$, and work in $V[G][H]$, where $H$ is $\mathbb Q$-generic.
Towards a contradiction, suppose that $f$ is a branch through $T$ distinct from any of the $f_\xi$'s.
As $\mathbb Q$ is proper, $\mathcal S$ remains stationary, so we may fix an elementary submodel $M\prec\H_\theta$ (for a large enough regular cardinal $\theta$)
containing $\{T,f,\langle f_\xi\mid\xi<\kappa\rangle\}$ such that $X:=M\cap\kappa$ is in $\mathcal S$.
By elementarity, $$M\models \forall \xi <\kappa\,(f \neq f_\xi).$$
Denote $\alpha:=X\cap\omega_1$. As $f$ is a branch through $T$, we get that $f\restriction\alpha\in T_\alpha$.
As $X\in\mathcal S$, we may find a $\xi\in X$ such that $f\restriction \alpha=f_\xi\restriction \alpha$.
But $\xi\in X = M\cap\kappa$, and then, by elementarity,
$$M \models f=f_\xi.$$
This is a contradiction.
\end{proof}

Despite the fact that the generic tree added by $\mathbb{S}_\kappa$ is sealed for proper forcings, we can still add new branches to it without collapsing cardinals or adding reals using the quotient forcing $\mathbb S_{\kappa^+}/\mathbb S_{\kappa}$.
More generally, consider the countable support iteration $(\langle \mathbb P_\xi\mid\xi\le\kappa\rangle,\langle \dot{\mathbb{Q}}_\xi \mid\xi<\kappa\rangle)$, for an uncountable cardinal $\kappa$, where $\mathbb{Q}_0$ is the Jech partial order for adding a Souslin tree $T$, and, for every nonzero $\xi<\kappa$, $$\mathbb{P}_\xi\Vdash\dot{\mathbb Q}_\xi = T.$$
By Proposition~\ref{prop47} and the upcoming proposition, $\mathbb{P}_\kappa$ is proper (even $\sigma$-strategically closed) for any choice of $\kappa$,
and yet the quotient forcings of the form $\mathbb{P}_\mu / \mathbb{P}_\lambda$ are not proper, whenever $\aleph_2 \le \lambda< \mu$.

\begin{prop} $\mathbb{P}_\kappa$ has a dense subset that is isomorphic to $\mathbb{S}_\kappa$.
\end{prop}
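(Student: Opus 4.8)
The plan is to exhibit a dense subset of $\mathbb P_\kappa$ order-isomorphic to $\mathbb S_\kappa$. Recall that the Jech forcing $\dot{\mathbb Q}_0=\mathbb Q_0$ may be taken to consist of the pairs $(T_p,\epsilon_p)$ figuring in clauses (1)--(2) of Definition~\ref{Stewart}, ordered by the first two bullets of the ordering given there (end-extension), and that for $\xi>0$ the iterand $\dot{\mathbb Q}_\xi$ is a $\mathbb P_\xi$-name for $(T,{\supseteq})$, where $T$ is the generic tree added at stage $0$; in particular $(T_p,\epsilon_p)$ forces $T\restriction(\epsilon_p+1)=T_p$. Define $e\colon\mathbb S_\kappa\to\mathbb P_\kappa$ by sending $(T_p,\epsilon_p,b_p)$ to the condition with support $\{0\}\cup\dom(b_p)$ whose $0$th entry is $(T_p,\epsilon_p)$ and whose $\xi$th entry, for $\xi\in\dom(b_p)$, is the check-name $\check{b_p(\xi)}$. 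As $b_p(\xi)\in T_p$ and the $0$th coordinate forces $T_p$ to be an initial segment of $T$, this is a genuine element of $\mathbb P_\kappa$; and reading off the two orderings coordinate by coordinate --- using that $\check a\supseteq\check b$ is forced exactly when $a\supseteq b$ --- shows that $e$ is an order-isomorphism of $\mathbb S_\kappa$ onto the set $D$ of $q\in\mathbb P_\kappa$ with $0\in\operatorname{supp}(q)$, $q(0)=(T_q,\epsilon_q)\in\mathbb Q_0$, and $q(\eta)=\check{b_q(\eta)}$ for some injective $\eta\mapsto b_q(\eta)$ into the top level $(T_q)_{\epsilon_q}$ ($\eta\in\operatorname{supp}(q)\setminus\{0\}$). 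So the whole point is that $D$ is dense.

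It is convenient to first drop injectivity: for $\xi\le\kappa$ let $D_\xi\s\mathbb P_\xi$ be defined by the same requirements as $D$ but without the injectivity clause, and prove by induction on $\xi$ that $D_\xi$ is dense in $\mathbb P_\xi$. The cases $\xi\le 1$ are trivial. If $\cf(\xi)>\omega$, the support of any condition is bounded below $\xi$, so the induction hypothesis applies as is. If $\cf(\xi)=\omega$, fix a cofinal sequence $\langle\zeta_n\mid n<\omega\rangle$ (with $\zeta_n\ge 1$) and, given $p\in\mathbb P_\xi$, perform the standard fusion: recursively build $p=p_0\ge p_1\ge\cdots$ so that $p_{n+1}\restriction\zeta_n$ lies in $D_{\zeta_n}$ (by the induction hypothesis) and the height of its $0$th coordinate strictly exceeds that of $p_n$'s (bumping the $0$th coordinate once more if necessary); the desired lower bound $q\in D_\xi$ is then formed by unioning the $0$th-coordinate trees and taking, as a new top level, the cofinal branches $\bigcup_n b_{p_n}(\eta)$ (for the $\eta$ that enter some support) together with one cofinal branch through each node of the union tree, so as to keep that level countable and the tree normal; that $q$ lies below every $p_n$ is the routine coordinatewise induction.

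The heart of the argument is the successor step, $\xi=\zeta+1$ with $\zeta\ge 1$. Given $p\in\mathbb P_\xi$, the induction hypothesis lets us extend $p\restriction\zeta$ into $D_\zeta$ (keeping $p(\zeta)$ as the final coordinate, which is legitimate since the new condition still forces $p(\zeta)\in\dot{\mathbb Q}_\zeta$), and it remains to turn that coordinate into the check-name of a node of the current top level; this amounts to deciding the $\mathbb P_\zeta$-name $p(\zeta)$. The delicate point is that $p(\zeta)$ names a node of the \emph{partially built} tree $T$, which is not a fixed ground-model set, so it cannot be decided outright. One sidesteps this as follows: first decide the length $\ell<\omega_1$ of the node named by $p(\zeta)$ --- a name for an ordinal, so decidable by a dense set; next extend the $0$th coordinate (within $\mathbb Q_0$) to height exceeding $\ell$, which freezes the $\ell$th level of $T$ to a fixed countable ground-model set; then decide $p(\zeta)$ to equal a specific node $t$ of that level --- now a name for a member of a fixed countable set; then reapply the induction hypothesis to get back inside $D_\zeta$; and finally, by normality, extend $t$ to a node $\hat t$ of the (possibly larger) current top level and declare the final coordinate to be $\check{\hat t}$. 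The resulting condition belongs to $D_\xi$ and lies below $p$, completing the induction.

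Finally, to obtain $\mathbb S_\kappa$ on the nose rather than a superset of it, observe that $D$ is dense in $D_\kappa$: given $q\in D_\kappa$, extend the $0$th coordinate to a taller normal tree that branches freely above $(T_q)_{\epsilon_q}$, and re-route the countably many values $b_q(\eta)$ to pairwise distinct nodes of the new top level. Hence $D$ is dense in $\mathbb P_\kappa$, and $e$ presents it as a copy of $\mathbb S_\kappa$. I expect the main obstacle to be exactly the successor step: because $T$ is not a ground-model object, the name $p(\zeta)$ is not directly decidable, and the remedy --- pinning down its length before attempting to decide it --- is the one genuinely non-formulaic ingredient, everything else being standard bookkeeping with iterated forcing.
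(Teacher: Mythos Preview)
Your argument is correct, but it proceeds quite differently from the paper's. You induct on the length of the iteration: the successor step is handled by first deciding the height of the last coordinate, then end-extending the $0$th coordinate so that the relevant level becomes a fixed ground-model set, then deciding the node and re-rectangularizing; the countable-cofinality limit step is a standard fusion. The paper instead dispenses with induction entirely: given $p\in\mathbb P_\kappa$, it picks a countable $M\prec\H_\theta$ containing $p$, takes a $\mathbb P_\kappa$-generic filter $G$ over $M$ through $p$, and reads off the rectangular extension directly---the $0$th coordinate is $\bigcup G(0)$ capped by the branches $\{f_\xi\mid\xi\in M\cap\kappa\}$ determined by the other coordinates, and $q(\xi)=f_\xi$. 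This one-shot elementary-submodel argument is shorter and more conceptual (it simultaneously decides all coordinates in $M$), whereas your approach is more elementary in that it avoids elementary submodels altogether and makes the mechanism explicit; it also isolates the genuine content (your ``decide the length first'' trick) more transparently. One small point: your map $e$ is ill-defined when $0\in\dom(b_p)$, since then the $0$th entry is specified twice; the paper notes that the image of its map is $\{p\in\mathbb S_\kappa\mid 0\notin\dom(b_p)\}$, which is obviously isomorphic to $\mathbb S_\kappa$, and you should make the analogous restriction.
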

\begin{proof} Let $R$ denote the collection of all \emph{rectangular} conditions in $\mathbb P_\kappa$, i.e, the collection of all conditions $q$ for which there exists some $\delta<\omega_1$ such that:
\begin{itemize}
\item $q(0)$ is a tree of height $\delta+1$,
\item for every $\xi\in\kappa\setminus\{0\}$, $q(\xi)$ is either trivial or a node at the $\delta^{\text{th}}$-level of $q(0)$,
\item all maximal nodes of $q(0)$ which are of the form $q(\xi)$, for $\xi\in\kappa \setminus \{0\}$, are pairwise distinct.
\end{itemize}

To see that $R$ is isomorphic to $\mathbb{S}_\kappa$, note that we can map a condition $q\in R$ to a condition $\phi(q)$ in $\mathbb{S}_\kappa$, by keeping the tree coordinate, and using the function $b_{\phi(q)}$ to record all maximal nodes of $q(0)$ that are of the form $q(\xi)$. More precisely, we put:
\begin{itemize}
\item $T_{\phi(q)}(0):=q(0)$,
\item $\epsilon_{\phi(q)}:=\h(q(0))-1$,
\item $b_{\phi(q)}(\xi):=q(\xi),$ whenever $q(\xi)$ belongs to the top level of $q(0)$.
\end{itemize}

This mapping is clearly order-preserving, with the image
$$\{p\in\mathbb{S}_\kappa \mid 0 \notin \dom(b_p)\},$$
which is obviously isomorphic to $\mathbb{S}_\kappa$. Moreover, it is straightforward to define an inverse of $\phi$.

It remains to show that $R$ is dense. Let $p$ be an arbitrary condition in $\mathbb{P}_\kappa$.
Fix a countable $M\prec\H_\theta$, for a sufficiently large regular cardinal $\theta$, containing all relevant objects.
Let $\delta:=\omega_1 \cap M$, and let $G\s P\cap M$ be a $\mathbb{P}_\kappa$-generic filter over $M$, containing $p$. For $\xi\in\kappa \cap M$, let us denote by $G(\xi)$ the projection of $G$ to the coordinate $\xi$.
Therefore $\bigcup G(0)$ is a tree of height $\delta$, with countable levels,
and for each $\xi\in\kappa \cap M \setminus \{0\}$, $G(\xi)$ determines a cofinal branch $f_\xi$ through $G(0)$, given by the formula
$$f_\xi := \bigcup\{ t\in {}^{<\omega_1}2 \mid\exists r\in G\,(r\restriction \xi\Vdash r(\xi) = t)\}.$$

The latter follows from the observation that for any $\epsilon<\delta$, the set
$$D_{\xi,\epsilon}:=\{ r\in\mathbb{P}_\kappa \mid\exists t\in {}^{<\omega_1}2\,(r\restriction \xi\Vdash r(\xi) =t \wedge \dom(t) \ge \epsilon)\},$$
is dense in $\mathbb{P}_\kappa$, and belongs to $M$.

Finally, we define a condition $q$ by the following considerations:
\begin{itemize}
\item $q(0):=G(0)\cup\{f_\xi \mid\xi\in M\cap\kappa\}$,
\item $q(\xi):=f_\xi$ for every $\xi\in M\cap\kappa \setminus \{0\}$,
\item $q(\xi):=\emptyset$ for every $\xi\in\kappa\setminus M$.
\end{itemize}

Then $q$ is a rectangular condition extending $p$, as sought.
\end{proof}

\section{The failure of diamond on a Kurepa tree}\label{sec4}

The main result of this section is a consistent example of a binary Kurepa tree on which diamond fails.
At the end of this section, we shall derive a few additional corollaries.

As a first step, we present a notion of forcing with side conditions for adding a branch through a given normal binary $\aleph_1$-tree $T$.
The poset takes a transversal $\vec t$ of the full binary tree $({}^{<\omega_1}2,{\subsetneq})$ as a second parameter,
and ensures that the generic branch will disagree on a club with this transversal.
\begin{defn}
Let $T$ be a normal binary $\aleph_1$-tree, and let $\vec t\in\prod_{\alpha<\omega_1}{}^\alpha2$.

The forcing $\mathbb{Q}(T,\,\vec{t})$ consists of all triples $p=(x_p,\mathcal M_p,f_p)$ that satisfy all of the following:
\begin{enumerate}
\item $x_p$ is a node in $T$;
\item $\mathcal M_p$ is a finite, $\in$-increasing chain of countable elementary submodels of $\H_{\omega_1}$;
\item $f_p$ is a partial function from $\mathcal M_p$ to $\omega_1$;
\item for every $M\in\mathcal M_p$:
\begin{itemize}
\item $\dom(x_p)\ge M\cap\omega_1$,
\item $x_p \restriction(M\cap\omega_1)\neq\vec t(M\cap\omega_1)$;
\item $f_p\restriction M\in M$.
\end{itemize}
\end{enumerate}
The ordering is defined by letting $q \le p$ iff
\begin{enumerate}
\item $x_q\supseteq x_p$;
\item $\mathcal M_q \supseteq \mathcal M_p$;
\item for every $M\in\dom(f_p)$, $M\in\dom(f_q)$ and $f_q(M)\ge f_p(M)$.
\end{enumerate}
\end{defn}
\begin{remark}\label{rmk53} $\mathbb{Q}(T,\,\vec{t})\s\H_{\omega_1}$, so that $\mathbb{Q}(T,\,\vec{t})\in\H_{\omega_2}$.
\end{remark}
\begin{lemma}\label{generic}
Suppose that $T$ is a normal binary $\aleph_1$-Souslin tree,
and $\vec t\in\prod_{\alpha<\omega_1}{}^\alpha2$.
Let $p\in\mathbb{Q}(T,\,\vec{t})$.
Then for every countable $M^*\prec\H_{\omega_2}$ with $\mathbb{Q}(T,\,\vec{t})\in M^*$
such that $M^*\cap\H_{\omega_1}\in\mathcal M_p$, $p$ is $M^*$-generic.
\end{lemma}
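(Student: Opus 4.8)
The goal is the standard side-condition properness argument: given $p$ with $M^* \cap \H_{\omega_1} \in \mathcal M_p$, show every dense $D \in M^*$ can be met by an extension of $p$ that is still compatible with all of $M^*$'s information, hence $p$ is $M^*$-generic. Write $M := M^* \cap \H_{\omega_1}$ and $\delta := M \cap \omega_1 = M^* \cap \omega_1$. First I would make the observation that enables the whole argument: since $T$ is Souslin and $M^* \prec \H_{\omega_2}$ contains $T$, the $\delta^{\text{th}}$ level $T_\delta$ consists precisely of the $\delta$-branches of $T \cap M$ that are $T$-generic over $M$ (a folklore fact about Souslin trees analogous to the one invoked in the proof of Proposition~\ref{prop2}). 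Concretely, every node $x \in T_\delta$ extends a condition-relevant node inside $M$, and conversely one can extend any given $x_p$ upward to a node of $T_\delta$ avoiding the forbidden value $\vec t(\delta)$ — normality of $T$ gives at least two incomparable extensions at level $\delta$, so at least one of them differs from $\vec t(\delta)$.

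Next I would set up the genericity argument proper. Fix a dense open $D \in M^*$ and a condition $q \le p$ with $q \in D$; we must find $q' \in D \cap M^*$ compatible with $p$. The key structural move is to \emph{reflect} $q$ into $M^*$: the "trace" of $q$ that lives below $M$, namely those models in $\mathcal M_q$ that lie in $M$ together with the part of $f_q$ on them, is an element of $M^*$, and by elementarity $M^*$ sees that there exists a condition extending $p$, lying in $D$, whose models below $\delta$ and whose $f$-values below $\delta$ match that trace. So we may pick such a $q' \in D \cap M^*$. Now I would amalgamate $q$ and $q'$: the node coordinate is handled by taking a common extension in $T_\delta$ (using the Souslin-genericity fact to see $x_q$ and $x_{q'}$ are compatible — they are both compatible with $x_p \in M$ at level $\delta$ and $M^*$'s condition only commits to data that $q$ already realizes, so their least upper bounds can be taken in $T_\delta$), the model chains are concatenated with $M$ inserted as the largest element below $M^*$'s models, and $f_q \cup f_{q'}$ is a function because the domains meet only inside $M$ where they were chosen to agree. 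One must check Clause~(4): for every new model $N$, either $N \in M$ (then $\dom(x) \ge N\cap\omega_1$ and the $\vec t$-avoidance and $f\restriction N \in N$ conditions come from $q$ or $q'$), or $N = M$ (then $\dom(x) = \delta \ge \delta$, $x\restriction\delta \ne \vec t(\delta)$ by the choice of the level-$\delta$ extension, and $f\restriction M \in M$ by construction), or $N \in \mathcal M_q \setminus M$ but those are exactly handled as in $q$ after noting they sit above $M$.

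\textbf{Main obstacle.} The delicate point is the compatibility of the tree coordinates $x_q$ and $x_{q'}$, i.e., that they have a common extension landing at level $\delta$ while still avoiding $\vec t(\delta)$ at that level and at every intermediate model. This is exactly where Souslin-ness (rather than mere $\aleph_1$-tree-ness) is essential: it guarantees that the generic branch of $T$ through $M$ is determined by a single node of $T_\delta$, so that $q$, having already committed to some $x_q$ with $\dom(x_q) \ge \delta$ meeting all its models' constraints, automatically threads consistently with the reflected copy $q'$ — there is no room for $q$ and $q'$ to force the branch in incompatible directions. A secondary nuisance is bookkeeping the $f_p$ coordinate: one has to verify $f_q(M') \ge f_p(M')$ is preserved for $M' \in \dom(f_p) \subseteq \mathcal M_p$, but since $\mathcal M_p$'s models all lie in $M^*$ and $q \le p$ already, and $q'$ was chosen to extend $p$, this is routine once the amalgamation is set up. I expect the only genuinely non-mechanical step to be articulating cleanly why the reflection of $q$ into $M^*$ captures enough to recover a compatible $q'$, and I would lean on the framework of \cite{MR2139743} to keep that argument short.
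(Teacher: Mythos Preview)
Your outline has the right ingredients but assembles them in the wrong order, and this creates a genuine gap at exactly the point you flag as the ``main obstacle.''

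You start by fixing an arbitrary $q\le p$ with $q\in D$, then propose to reflect the sub-$M$ trace of $q$ into $M^*$ to obtain some $q'\in D\cap M^*$ with the same model data, and finally to amalgamate $q$ and $q'$. The problem is that elementarity gives you \emph{no control whatsoever} over the tree coordinate $x_{q'}$. The trace you reflect consists only of the models in $\mathcal M_q\cap M$ and the values of $f_q$ on them; nothing in that data pins down which direction $x_{q'}$ points in the tree. So $x_{q'}$ may well be incomparable with $x_q\restriction\delta$, and then no amalgamation is possible. Your justification --- that Souslin-ness ``guarantees the generic branch through $M$ is determined by a single node of $T_\delta$'' --- is true but irrelevant here: it tells you $x_q\restriction\delta$ is generic over $M^*$, not that the particular $q'$ handed to you by elementarity has $x_{q'}\subseteq x_q$. (Also note that $p\notin M^*$, since $M^*\cap\H_{\omega_1}\in\mathcal M_p$; so you cannot ask $M^*$ for a condition ``extending $p$'' as written.)

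The paper's proof reverses the order of operations and thereby avoids the gap. It first forms the trace $\bar p:=(x_p\restriction\gamma,\,\mathcal M_p\cap M^*,\,f_p\restriction M^*)\in M^*$, then observes that the set $D':=\{x_q\mid q\in D,\ q\le\bar p\}$ is dense in $(T,{\supseteq})$ below $x_{\bar p}$. Now Souslin-ness is invoked: since $x_p\restriction\delta$ is $(T,{\supseteq})$-generic over $M^*$ and $D'\in M^*$, there is some $x\in D'\cap M^*$ with $x\subseteq x_p\restriction\delta$. Only \emph{then} is a witness $q\in D\cap M^*$ chosen with $x_q=x$, so that $x_q\subseteq x_p$ holds by construction. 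The amalgam is simply $r:=(x_p,\,\mathcal M_p\cup\mathcal M_q,\,\max(f_p,f_q))$, and there is nothing left to check about tree-compatibility. In short: use the Souslin genericity to locate the tree coordinate \emph{first}, and let the condition follow, rather than choosing the condition first and hoping the tree coordinates line up.
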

\begin{proof}
Fix a model $M^*$ as above, and let $D\in M^*$ be a dense open subset of $\mathbb{Q}(T,\,\vec{t})$; we need to find an $r\in D\cap M^*$ compatible with $p$.

Fix a large enough $\gamma\in M^*\cap\omega_1$ such that $N\cap\omega_1\s\gamma$ for every $N\in\mathcal M_p\cap M^*$. Define a condition
$$\bar p:=(x_p\restriction\gamma,\mathcal M_p\cap M^*,f_p\restriction M^*),$$
and note that $\bar p\in M^*$.
\begin{claim} The set $D':=\{x_q \mid q\in D,\; q \le \bar p\}$ is dense in $(T,{\supseteq})$ below $x_{\bar p}$.
\end{claim}
\begin{proof}
Let $y$ be any extension of $x_{\bar p}$. Evidently, $(y,\mathcal M_{\bar p},f_{\bar p})$ is a legitimate condition.
Pick $q\in D$ such that $q \le (y,\mathcal M_{\bar p},f_{\bar p})$.
Then $x_q$ is an extension of $y$ that lies in $D'$, as sought.
\end{proof}

Denote $\delta:=M^*\cap\omega_1$.
Since $T$ is a Souslin tree lying in $M^*$, any node in $T_\delta$ is $(T,{\supseteq})$-generic over $M^*$. In particular, $x_p\restriction \delta$ is $(T,{\supseteq})$-generic over $M^*$, and it follows that there is an $x\in D'\cap M^*$ such that
$$x\s x_p\restriction \delta.$$
Now, pick any $q$ witnessing $x\in D'$, so that $x=x_q\s x_p$. By elementarity, we can assume that $q\in M^*$.

Define a condition $r$ by letting $x_r:=x_p$, $\mathcal M_r:=\mathcal M_p\cup\mathcal M_q$, and $f_r$ be such that $\dom(f_r)=\dom(f_p)\cup\dom(f_q)$ and
$$f_r(\alpha):=\max(\{f_p(\alpha)\mid\alpha\in\dom(f_p)\}\cup\{f_q(\alpha)\mid\alpha\in\dom(f_q)\}).$$

It is not hard to see that $r$ is a legitimate condition extending both $p$ and $q$.
\end{proof}
\begin{lemma}\label{properness}
Suppose that $T$ is a normal binary $\aleph_1$-Souslin tree,
and $\vec t\in\prod_{\alpha<\omega_1}{}^\alpha2$. Then:
\begin{enumerate}
\item $\mathbb{Q}(T,\,\vec{t})$ is proper;
\item $\mathbb{Q}(T,\,\vec{t})$ adds a branch through $T$ that evades $\vec{t}$ on a club.
\end{enumerate}
\end{lemma}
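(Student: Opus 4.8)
The plan is to establish both clauses simultaneously via the genericity lemma already proved. First, for properness: given any countable $M^*\prec\H_{\omega_2}$ with $\mathbb{Q}(T,\vec t)\in M^*$ and any $p_0\in\mathbb{Q}(T,\vec t)\cap M^*$, I would produce an $(M^*,\mathbb{Q}(T,\vec t))$-generic condition below $p_0$. Set $M:=M^*\cap\H_{\omega_1}$, which is a countable elementary submodel of $\H_{\omega_1}$ by Remark~\ref{rmk53} and the fact that $\mathbb{Q}(T,\vec t)\s\H_{\omega_1}$. The only subtlety is clause~(4) of the definition: we need $x_{p}\restriction(M\cap\omega_1)\neq\vec t(M\cap\omega_1)$ for the new model $M$. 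Since $T$ is normal and $M\cap\omega_1<\omega_1$, the level $T_{M\cap\omega_1}$ is countable with at least two nodes extending $x_{p_0}$ (normality gives at least one extension at each higher level; to get disagreement we use that $T$ is binary so that any node of height $<M\cap\omega_1$ has two immediate successors, hence $T_{M\cap\omega_1}$ has at least two nodes above $x_{p_0}$ once we are past $\dom(x_{p_0})$), so we may pick $x\in T_{M\cap\omega_1}$ with $x\supseteq x_{p_0}$ and $x\neq\vec t(M\cap\omega_1)$. Then $p:=(x,\mathcal M_{p_0}\cup\{M\},f_{p_0})$ is a legitimate condition extending $p_0$ with $M\in\mathcal M_p$, and by Lemma~\ref{generic}, $p$ is $M^*$-generic. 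This proves clause~(1).

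For clause~(2), let $G$ be $\mathbb{Q}(T,\vec t)$-generic and set $b:=\bigcup\{x_p\mid p\in G\}$. A routine density argument — for each $\alpha<\omega_1$, the set of conditions $p$ with $\dom(x_p)\ge\alpha$ is dense, since given any $p$ we can extend $x_p$ to height $\ge\alpha$ using normality of $T$ (and keeping $\mathcal M_p,f_p$ fixed, which stays legitimate) — shows that $b$ is a cofinal branch through $T$. To see that $b$ evades $\vec t$ on a club, consider $C:=\{M\cap\omega_1\mid M\in\bigcup_{p\in G}\mathcal M_p\}$. Each condition's $\mathcal M_p$ is an $\in$-increasing chain of countable elementary submodels of $\H_{\omega_1}$, and a density argument (for any countable $N\prec\H_{\omega_1}$ and any $p\in\mathbb{Q}(T,\vec t)\cap N$, one can extend $p$ to a condition adding $N$ to the chain, after first extending $x_p$ to height $\ge N\cap\omega_1$ with the required disagreement as above) shows that $\{N\cap\omega_1\mid N\in\bigcup_{p\in G}\mathcal M_p\}$ is cofinal in $\omega_1$; combined with the chain condition it is in fact club. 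For every $\delta=M\cap\omega_1\in C$, fix $p\in G$ with $M\in\mathcal M_p$; then clause~(4) gives $x_p\restriction\delta\neq\vec t(\delta)$, and since $b\restriction\delta=x_p\restriction\delta$, we conclude $b\restriction\delta\neq\vec t(\delta)$. Hence $b$ evades $\vec t$ on the club $C$.

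The main obstacle I anticipate is the bookkeeping in the properness argument — specifically, verifying that the condition $p$ constructed above genuinely lies below $p_0$ and satisfies clause~(4) for \emph{every} model in $\mathcal M_p$, not just the newly added $M$. For the old models $N\in\mathcal M_{p_0}$, the conditions $\dom(x_p)\ge N\cap\omega_1$ and $x_p\restriction(N\cap\omega_1)\neq\vec t(N\cap\omega_1)$ and $f_p\restriction N\in N$ are inherited from $p_0$ together with the fact that $x_p\supseteq x_{p_0}$ (so restrictions agree) — this is where normality of $T$ and the coherence of the tree structure are used. For the new model $M=M^*\cap\H_{\omega_1}$, we need $\mathcal M_{p_0}\cup\{M\}$ to still be an $\in$-increasing chain, which follows because each $N\in\mathcal M_{p_0}$ lies in $M^*\cap M^*\prec\H_{\omega_2}$ hence in $M$ (as $N$ is coded by a real in $M^*$), and the required $f_{p_0}\restriction M\in M$ holds since $f_{p_0}\in M^*$ is a finite function so $f_{p_0}\restriction M=f_{p_0}$ modulo the old models, all of which are in $M$. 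Once these checks are in place, Lemma~\ref{generic} does the rest, and clause~(2) is then purely a matter of assembling the relevant dense sets.
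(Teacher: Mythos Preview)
Your argument for clause~(1) is essentially the paper's. One small correction: ``binary'' in this paper only means $T\subseteq{}^{<\omega_1}2$, not that every node has two immediate successors. The reason there are at least two extensions of $x_{p_0}$ at level $\delta=M^*\cap\omega_1$ is that $T$ is \emph{Souslin}: the cone above $x_{p_0}$ is itself an uncountable tree with no cofinal branch, so it must split below $\delta$, giving at least two nodes at level $\delta$ above $x_{p_0}$. This is what the paper's phrase ``normal and Souslin'' is doing.

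Clause~(2), however, has a genuine gap. You assert that $C=\{M\cap\omega_1\mid M\in\bigcup_{p\in G}\mathcal M_p\}$ is closed ``combined with the chain condition'', but you do not say which chain condition or how it yields closedness, and in fact no such argument is available here. The entire purpose of the third coordinate $f_p$ --- which you never invoke --- is precisely to force $C$ to be closed. The paper argues as follows: given $\gamma\notin C$, pick $p\in G$ deciding this, with consecutive models $N_0\in N_1$ in $\mathcal M_p$ satisfying $N_0\cap\omega_1<\gamma<N_1\cap\omega_1$, and extend $p$ so that $f_p(N_0)\ge\gamma$. Now no $q\le p$ can insert a model $N$ with $N_0\in N$ and $N\cap\omega_1\in(N_0\cap\omega_1,\gamma)$, because the requirement $f_q\restriction N\in N$ would force $f_q(N_0)\in N$, whereas $f_q(N_0)\ge f_p(N_0)\ge\gamma>N\cap\omega_1$. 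Hence $p\Vdash\dot C\cap(N_0\cap\omega_1,\gamma)=\emptyset$, so $\gamma$ is not an accumulation point of $C$. Without the $f$-coordinate there is nothing preventing later conditions from inserting models at heights accumulating to some $\gamma\notin C$, so your outline as written does not establish that the evasion set contains a club.
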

\begin{proof}
\begin{enumerate}
\item To verify properness, fix $p\in\mathbb{Q}(T,\,\vec{t})$, and a countable $M^* \prec\H_{\omega_2}$, containing everything relevant including $p$.
Denote $\delta:=M^*\cap\omega_1$.
As $T$ is normal and Souslin, we may be able to extend $x_p$ to some node $x\in T_\delta$ distinct from $\vec t(\delta)$.

The triple $(x,\mathcal M_p \cup\{M^*\cap\H_{\omega_1}\},f_p)$ is a condition in $\mathbb{Q}(T,\,\vec{t})$, and by Lemma~\ref{generic}, it is an $M^*$-generic condition.

\item Let $G$ be a generic filer. It suffices to verify that the following uncountable set is moreover closed:
$$D:=\{M\cap\omega_1 \mid\exists p\in G\,(M\in\mathcal M_p)\},$$
that is, every $\gamma\in\kappa\setminus D$ is not an accumulation point of $D$.

Let $\dot{D}$ be a name for the above set, and suppose $p\Vdash\dot{\gamma}\in\kappa\setminus \dot{D}$. By extending $p$, we can assume that it forces that $\dot{\gamma}=\gamma$, and $\gamma$ lies between two consecutive elements $N_0\in N_1$ of $\mathcal M_p$.
By extending $p$ further, we can assume that $f_p(N_0) \ge \gamma$. Since $\gamma$ is forced to not be the height of any element of $\mathcal M_p$, it follows that $N_0\cap\omega_1 < \gamma$, and
$$p\Vdash\dot{D}\cap(N_0\cap\omega_1,\gamma) =\emptyset,$$
as sought. \qedhere
\end{enumerate}
\end{proof}
\begin{defn} Given a sequence of binary $\aleph_1$-trees $T^0,\ldots,T^{n+1}$,
the tree product $T^0\otimes\cdots\otimes T^n$ is the collection of all $(x_0,\ldots,x_n)\in T^0\times\cdots\times T^n$
such that $\dom(x_0)=\cdots=\dom(x_n)$, and the ordering is such that a node $(x_0,\ldots,x_n)$ is below a node $(y_0,\ldots,y_n)$ iff $x_i\s y_i$ for all $i\le n$.
\end{defn}
In our context, it will be useful to know of the following fact.

\begin{fact}[{\cite[\S5]{paper65}}]\label{diamondplus}
If $\diamondsuit^+$ holds, then there is a binary $\aleph_1$-Aronszajn tree $T$ and a sequence $\vec T=\langle T^\eta\mid\eta<\omega_2\rangle$ of normal binary $\aleph_1$-subtrees of $T$ such that,
for every nonempty $a\in[\omega_2]^{<\omega}$, the tree product $\bigotimes_{\eta\in a}T^\eta$ is an $\aleph_1$-Souslin tree.
\end{fact}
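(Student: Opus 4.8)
The final statement is Fact~\ref{diamondplus}, attributed to \cite[\S5]{paper65}; since it is cited rather than proved in the text, let me sketch how one would prove such a thing.

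\medskip

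The plan is to build $T$ together with the subtrees $T^\eta$ simultaneously by recursion on levels, using a $\diamondsuit^+$-sequence to do the sealing of antichains in arbitrary finite tree products. First I would fix a $\diamondsuit^+$-sequence $\langle \mathcal{A}_\alpha\mid\alpha<\omega_1\rangle$, i.e., each $\mathcal{A}_\alpha$ is a countable subset of $\mathcal{P}(\alpha)$ with $\alpha\in\mathcal{A}_\alpha$ and closed enough, such that for every $X\s\omega_1$ there is a club $C$ with $X\cap\alpha\in\mathcal{A}_\alpha$ and $C\cap\alpha\in\mathcal{A}_\alpha$ for all $\alpha\in C$. The ambient tree will be a downward-closed subtree $T\s{}^{<\omega_1}2$, and each $T^\eta$ will be a normal downward-closed subtree determined by deciding, at each level, which immediate successors to keep; the natural device is to reserve, for each $\eta<\omega_2$, a way of reading $\eta$ off as a subset of $\omega_1$ (fix in advance an injection of $\omega_2$ into $\mathcal{P}(\omega_1)$, or more cleverly work with $\omega_2$-many branches coded by a single Kurepa-like family — but Aronszajn-ness must be maintained, so the $T^\eta$ share nodes heavily and differ only in which cofinal structure they permit). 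The successor step is routine: extend every node on level $\alpha$ to two nodes on level $\alpha+1$ inside the big tree, and for each $\eta$ decide the local branching of $T^\eta$ so as to keep it normal and so as to diagonalize against potential branches.

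\medskip

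The heart of the construction is the limit step at $\alpha\in\acc(\omega_1)$, where one must decide which cofinal branches through $T\restriction\alpha$ (and through each relevant finite product) get a node placed at level $\alpha$. Here is where $\diamondsuit^+$ is used: for each finite $a\in[\omega_2]^{<\omega}$ that is "seen" by $\mathcal{A}_\alpha$ (in the sense that the relevant coding subsets of $\alpha$ all lie in $\mathcal{A}_\alpha$), and for each maximal antichain $A$ of $\bigotimes_{\eta\in a}T^\eta\restriction\alpha$ coded into $\mathcal{A}_\alpha$, I would ensure that every node we place on level $\alpha$ of the product extends some element of $A$ — this is the standard "seal antichains predicted by the diamond" trick that makes the product Souslin. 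The point of $\diamondsuit^+$ over plain $\diamondsuit$ is precisely that a single guessing sequence on $\omega_1$ can, via a club, simultaneously capture the countably-many relevant objects (the tree products restricted to $\alpha$, their antichains, the coding sets for the finitely many $\eta\in a$) for a club of $\alpha$, and this must work for every finite $a\in[\omega_2]^{<\omega}$ at once. One also has to keep $T$ itself Aronszajn, i.e., place no node at any limit level extending a branch that would later become cofinal; since each level stays countable and we only ever add nodes that seal a predicted antichain, no cofinal branch survives — this is the usual argument that a "diamond-Souslin" construction yields an Aronszajn (indeed Souslin) tree.

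\medskip

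The main obstacle, and the place requiring genuine care, is the bookkeeping that reconciles the $\omega_2$-sized index set with the $\omega_1$-sized diamond. One cannot literally guess subsets of $\omega_2$. The resolution is that what actually needs to be guessed at stage $\alpha$ is always a \emph{countable} object: a finite set $a$, the isomorphism type of $\bigotimes_{\eta\in a}T^\eta\restriction\alpha$ (a countable tree), and one of its maximal antichains — all of which live in $H_{\omega_1}$ and can be coded by subsets of $\alpha$. So the construction should be arranged so that, for any fixed finite $a$ and any potential maximal antichain $A$ of the eventual product $\bigotimes_{\eta\in a}T^\eta$, the set of $\alpha$ at which $A\restriction\alpha$ is maximal in $\bigotimes_{\eta\in a}T^\eta\restriction\alpha$ and is correctly guessed by $\mathcal{A}_\alpha$ is stationary; then $A$ gets sealed and $\bigotimes_{\eta\in a}T^\eta$ has no uncountable antichain. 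Verifying that the guessing can be set up uniformly over all finite $a$ — so that no finite product is neglected — is exactly the delicate combinatorial core, and it is what forces the use of the stronger principle $\diamondsuit^+$ rather than $\diamondsuit$. Once that is in place, normality of each $T^\eta$ and the subtree relation $T^\eta\s T$ are maintained by fiat at successor steps, and the conclusion follows. I would refer the reader to \cite[\S5]{paper65} for the full details of this bookkeeping.
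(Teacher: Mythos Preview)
The paper does not prove Fact~\ref{diamondplus}; it is cited from \cite[\S5]{paper65} and used as a black box in the proof of Theorem~\ref{thm58}. You correctly identify this and offer a sketch rather than a proof, explicitly deferring the details to the cited source. Your outline --- build the tree and its subtrees by recursion on levels, seal predicted antichains of finite products at limit stages, and use $\diamondsuit^+$ for the simultaneous club-capture of the relevant countable data --- is the standard architecture for such constructions and is in the right spirit.

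One small slip worth flagging: you write that the finite index set $a\in[\omega_2]^{<\omega}$ ``lives in $H_{\omega_1}$'', which is not literally true. What actually lives in $H_{\omega_1}$ at stage $\alpha$ is the tuple of codes $\langle A_\eta\cap\alpha\mid\eta\in a\rangle$ once each $\eta<\omega_2$ has been identified with a subset $A_\eta\s\omega_1$ (a move you yourself suggest earlier in the sketch, and which is available since $\diamondsuit^+$ implies $\ch$ and hence $2^{\aleph_1}=\aleph_2$). The subtrees $T^\eta$ must then be defined uniformly from these codes so that the sealing at stage $\alpha$ depends only on the restricted codes, not on $\eta$ itself. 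This is a wording issue in your exposition, not a conceptual gap. Since the present paper contains no proof of this fact to compare against, there is nothing further to adjudicate.
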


It is well-known that if the product $T\otimes S$ of two normal binary $\aleph_1$-trees is Souslin, then
$$(T,{\supseteq})\Vdash ``S \text{ is Souslin}".$$
Thus, it is natural to expect that furthermore
$$\mathbb{Q}(T,\,\vec{t})\Vdash ``S \text{ is Souslin}".$$
The next lemma shows that this is indeed the case.
\begin{lemma}\label{Souslinpreservation}
Suppose $T$ and $S$ are normal binary $\aleph_1$-trees the product of which is Souslin, and let $\vec t\in\prod_{\alpha<\omega_1}{}^\alpha2$. Then
$$\mathbb{Q}(T,\,\vec{t})\Vdash ``S \text{ is Souslin}".$$
\end{lemma}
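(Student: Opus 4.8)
The plan is to show that $\mathbb{Q}(T,\vec t)$ preserves the Souslinity of $S$ by the standard ``run a Souslin tree against a name for an antichain'' argument, but carried out with the side-condition structure in mind. So suppose toward a contradiction that some $p\in\mathbb{Q}(T,\vec t)$ forces that $\dot A$ is a maximal antichain in $S$ of size $\aleph_1$. Working in $V$, fix a countable elementary $M^*\prec\H_{\omega_2}$ containing $\{T,S,\vec t,\mathbb{Q}(T,\vec t),\dot A,p\}$, and set $\delta:=M^*\cap\omega_1$. The goal is to build a single condition $r\leq p$ together with a node $s^*\in S_\delta$ such that $r$ forces $s^*$ to be incompatible with every element of $\dot A\cap M^*$ in $S$; since $S$ is normal, every element of $S_\delta$ lies above a node of $S_\alpha$ for each $\alpha<\delta$, and by genericity/maximality $\dot A\cap M^*$ is forced to be predense below $\delta$, so this will be the desired contradiction once we also ensure $r$ forces a node of $S_\delta$ into the generic branch through $S$'s ``side'' — more precisely, that $\dot A\cap M^*$ is already maximal, which follows because $r$ is $M^*$-generic.

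The key device is the product $T\otimes S$, which is Souslin by hypothesis, hence it is $(T\otimes S,\supseteq)$-generic (equivalently $T$-generic and, over the $T$-extension, $S$-generic) at level $\delta$ over $M^*$. Concretely: inside $M^*$, using that $T\otimes S$ is Souslin and thus ccc and adds no branches, I would first argue that $\dot A$ is forced to be a maximal antichain of $S$ in $V$ already (no new elements appear at level $\delta$), and then I would run a density argument analogous to the Claim in the proof of Lemma~\ref{generic}: the set of pairs $(x_q, s)$ where $q\in\mathbb{Q}(T,\vec t)$ with $q\leq\bar p$ (the ``restricted'' condition $\bar p=(x_p\restriction\gamma,\mathcal M_p\cap M^*,f_p\restriction M^*)$, exactly as in Lemma~\ref{generic}) and $s\in S$ is a node that $q$ forces to be below some element of $\dot A$, is dense in $(T\otimes S,\supseteq)$ below $(x_{\bar p}, \mathrm{root})$. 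Because $T\otimes S$ is Souslin, the node $(x_p\restriction\delta, s^*)\in(T\otimes S)_\delta$ is $(T\otimes S,\supseteq)$-generic over $M^*$ for any choice of $s^*\in S_\delta$, so we can hit this dense set inside $M^*$ and obtain $x\in D'\cap M^*$ with $x\subseteq x_p\restriction\delta$, witnessed by some $q\in M^*$ which additionally forces a specified $s\in S$ into $\dot A$'s downward cone. Amalgamating $p$ and $q$ exactly as at the end of Lemma~\ref{generic} (take $x_r:=x_p$, $\mathcal M_r:=\mathcal M_p\cup\mathcal M_q$, $f_r$ the coordinatewise max) yields $r\leq p,q$. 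Then $r$ is $M^*$-generic by Lemma~\ref{generic}, so $r$ forces $\dot A\cap M^*$ to be maximal; but running this for all the relevant $s^*\in S_\delta$ shows $r$ forces cofinally many levels of $S$ to meet $\dot A\cap M^*$, so $r$ forces $|\dot A|\leq\aleph_0$ — contradiction. (Alternatively, and perhaps cleaner: show directly that for the generic branch $b$ through $T$ added by $\mathbb{Q}(T,\vec t)$, the quotient forcing is absorbed by $(T\otimes S,\supseteq)/b$, which preserves $S$ Souslin; but the side conditions make the explicit amalgamation argument the safer route to write.)

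I expect the main obstacle to be the bookkeeping that makes the two genericity requirements coexist: we need $x_p\restriction\delta$ to be $(T,\supseteq)$-generic over $M^*$ (so that the density argument inside $M^*$ can be ``pulled down'' to below $x_p$, as in Lemma~\ref{generic}) and simultaneously we need a node $s^*\in S_\delta$ that is $S$-generic over $M^*$-after-$T$, and the cleanest way to get both at once is precisely to invoke that the \emph{product} $T\otimes S$ is Souslin, so that $(x_p\restriction\delta, s^*)$ is product-generic over $M^*$ — this is where the hypothesis is used in an essential way, and it is exactly the point where the naive ``$T$ is still Souslin in $V^{\mathbb{Q}}$, hence so is $S$'' reasoning would be circular. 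A secondary technical point is checking that the condition $\bar p\in M^*$ and the amalgamated condition $r$ are legitimate — but this is identical to the corresponding verifications in Lemma~\ref{generic} (the clause $x_r\restriction(N\cap\omega_1)\neq\vec t(N\cap\omega_1)$ for $N\in\mathcal M_r$ holds because it held for $p$ and for $q$, and $\gamma$ was chosen above all heights of models in $\mathcal M_p\cap M^*$), so I would simply refer back to that proof rather than redo it.
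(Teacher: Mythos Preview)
Your core strategy --- use that $T\otimes S$ is Souslin to run a density argument in the product, then amalgamate as in Lemma~\ref{generic} --- is exactly the paper's. But the logical organization is off in a way that matters, and the argument as written does not close.

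The central gap: you take the initial condition $p$ inside $M^*$, so $\dom(x_p)<\delta$ and hence ``$x_p\restriction\delta$'' is just $x_p$, not a node at level $\delta$; there is no pair $(x_p\restriction\delta,s^*)\in(T\otimes S)_\delta$ on which to invoke Souslin-genericity. Relatedly, the condition $r$ you build depends on the particular $s^*\in S_\delta$ chosen, so ``running this for all $s^*$'' cannot produce a single $r$ that bounds $\dot A$. Finally, $M^*$-genericity of $r$ (even if you had it --- Lemma~\ref{generic} needs $M^*\cap\H_{\omega_1}\in\mathcal M_r$, which you never arranged) does not by itself force $\dot A\cap M^*$ to be a maximal antichain of $S$; that is the very thing you are trying to prove.

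The fix is the paper's organization. First extend your initial condition to a $q'$ with $x_{q'}\in T_\delta\setminus\{\vec t(\delta)\}$ and $M^*\cap\H_{\omega_1}$ added to the side conditions. Then the claim is that $q'$ forces $\dot A\cap(S\restriction\delta)$ to be maximal in $S$. To verify this, take an \emph{arbitrary} $p\le q'$ (now $\dom(x_p)\ge\delta$) and an arbitrary $y\in S_\delta$, and produce $r\le p$ forcing that $y$ extends a member of $\dot A$. This is where your density-in-$T\otimes S$ idea and the amalgamation from Lemma~\ref{generic} enter: one builds (inside $M^*$) a maximal antichain of pairs $(x_{q_\alpha},y_\alpha)$ above $(x_p\restriction\gamma,\,y\restriction\gamma)$, uses Souslinity of $T\otimes S$ to find one below $(x_p\restriction\delta,y)$, and then amalgamates. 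The conclusion is about $q'$, not about any particular $r$; that is what dissolves your quantifier problem over $s^*$. (Your stated goal --- finding $s^*$ \emph{incompatible} with $\dot A\cap M^*$ --- is also backwards; the argument shows every $y\in S_\delta$ is \emph{compatible}.)
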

\begin{proof}
Let $q\in\mathbb{Q}(T,\,\vec{t})$, and fix a $\mathbb{Q}(T,\,\vec{t})$-name $\dot{A}$ for a maximal antichain in $S$.
Fix a countable $M^* \prec\H_{\omega_2}$ containing $\{\mathbb{Q}(T,\,\vec{t}), q, \dot{A}\}$.
Denote $\delta := M^*\cap\omega_1$, and pick a node ${x}\in T_\delta\setminus\{\vec t(\delta)\}$ extending $x_q$.
We define an extension $q'\le q$ by letting:
$$q':=(x,\mathcal M_q \cup\{M^*\cap\H_{\omega_1}\},f_q).$$
It is clear that $q'$ is indeed a condition and $q' \le q$. We claim that
$$q'\Vdash ``\dot{A}\cap(S\restriction \delta)\text{ is a maximal antichain in }S".$$

To see this, pick a condition $p \le q'$ and a node ${y}\in S_\delta$;
our aim is to find a condition $r\le p$ that forces ${y}$ to extend an element of $\dot{A}$.

Fix a large enough $\gamma<\delta$ such that $N\cap\omega_1\s\gamma$ for every $N\in\mathcal M_p\cap M^*$.
Let $\bar x:=x_p\restriction\gamma$ and $\bar y:={y}\restriction\gamma$.
Define a condition $$\bar p := (\bar x,\mathcal M_p\cap M^*,f_p \restriction M^*).$$

Now, we turn to run a recursion producing a family $$\mathcal{E}=\{(q_\alpha,y_\alpha)\mid\alpha<\theta\},$$ satisfying that for each $\alpha$:
\begin{enumerate}
\item $q_\alpha\in\mathbb{Q}(T,\,\vec{t})$ with $q_\alpha \le\bar p$;
\item $y_\alpha\in S$ with $\bar y\s y_\alpha$;
\item $q_\alpha\Vdash\exists a\in\dot{A}\,(a\s y_\alpha)$;
\item $\dom(x_{q_\alpha})=\dom(y_\alpha)$;
\item for all $\beta<\alpha$, $(x_{q_\beta},y_\beta)\bot(x_{q_\alpha},y_\alpha)$ in $T\otimes S$.
\end{enumerate}
We continue the construction until it is not possible to choose the next element. Because $T\otimes S$ is Souslin,
Requirement~(5) ensures that the construction terminates after countably many steps, so the ordinal $\theta$ ends up being countable.
\begin{claim}\label{claim551} The set $\{(x_{q_\alpha},y_\alpha) \mid\alpha<\theta\}$ is a maximal antichain above $(\bar x,\bar y)$ in $T \otimes S$.
\end{claim}
\begin{proof} Suppose not. Then we may pick $(x',y')\in T \otimes S$ extending $(\bar x,\bar y)$ that is incompatible with $(x_{q_\alpha},y_\alpha)$ for every $\alpha<\theta$.
By possibly extending both $x'$ and $y'$, we may assume that $x'=x_{q'}$ for some condition $q' \le\bar p$ which moreover satisfies:
$$q'\Vdash\exists a\in\dot{A}\,(a\s y').$$
But now we can add $(q',y')$ to $\mathcal{E}$, contradicting its maximality.
\end{proof}

Since $q\in M^*$, by making canonical choices in the recursive construction we may secure that the countable family $\mathcal{E}$ be a subset of $M^*$.
So, by Claim~\ref{claim551}, we may find an $\alpha<\theta$ such that ($x_{q_\alpha}\s x$ and $y_\alpha\s y$). Since $\mathcal{E}\s M^*$, we infer that $q_\alpha\in M^*$.
Now, as in the proof of Lemma~\ref{generic}, we see that there exists a condition $r$ extending both $p$ and $q_\alpha$.
Recalling Requirement~(3) in the choice of $q_\alpha$, we get that
$$r\Vdash\exists a\in\dot{A}\,(a\s {y}),$$
as sought.
\end{proof}

We are now in conditions to prove the core of Theorem~\ref{thmc}.
\begin{thm}\label{thm58} It is consistent that there exists a binary Kurepa tree $T$ such that $\diamondsuit(T)$ fails.
\end{thm}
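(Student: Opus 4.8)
The plan is to force over a model of $\diamondsuit^+$ (e.g.\ $V=\mathsf L$, so that also $2^{\aleph_1}=\aleph_2$) with a countable support iteration of length $\omega_2$ that repeatedly adds evading branches through the tree of Fact~\ref{diamondplus}. Fix, as that fact provides, a binary $\aleph_1$-Aronszajn tree $T$ together with a sequence $\vec T=\langle T^\eta\mid\eta<\omega_2\rangle$ of normal binary $\aleph_1$-subtrees of $T$ such that every nonempty finite tree product $\bigotimes_{\eta\in a}T^\eta$ is $\aleph_1$-Souslin. Build, by simultaneous recursion, a countable support iteration $\langle\mathbb P_\xi,\dot{\mathbb Q}_\xi\mid\xi\le\omega_2\rangle$ as follows: using standard bookkeeping (legitimate since each iterand will have size $\le\aleph_1$ and $2^{\aleph_1}=\aleph_2$) fix a sequence $\langle\dot{\vec t}_\xi\mid\xi<\omega_2\rangle$ so that every $\mathbb P_{\omega_2}$-name for a transversal of $T$ is, after passing through $\aleph_2$-cc, equivalent to some $\dot{\vec t}_\xi$, together with an injection $\xi\mapsto\eta_\xi$ into $\omega_2$; then set $\dot{\mathbb Q}_\xi:=\mathbb Q(T^{\eta_\xi},\dot{\vec t}_\xi)$.

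The heart of the construction is the invariant, proved by induction on $\xi\le\omega_2$: for every finite $a\s\omega_2$ disjoint from $\{\eta_\zeta\mid\zeta<\xi\}$, $\mathbb P_\xi$ forces that $\bigotimes_{\eta\in a}T^\eta$ is Souslin. The successor step uses Lemma~\ref{Souslinpreservation}, noting that its proof uses binarity of the second tree nowhere, so it applies with $S:=\bigotimes_{\eta\in a}T^\eta$ a (possibly non-binary) normal $\aleph_1$-tree and $T:=T^{\eta_\xi}$, using that $T^{\eta_\xi}\otimes\bigotimes_{\eta\in a}T^\eta\cong\bigotimes_{\eta\in a\cup\{\eta_\xi\}}T^\eta$ is Souslin by the induction hypothesis at $\xi$. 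Granting the invariant, taking $a=\{\eta_\xi\}$ shows $\mathbb P_\xi\Vdash$``$T^{\eta_\xi}$ is Souslin'', so by Lemma~\ref{properness} each $\dot{\mathbb Q}_\xi$ is proper and adds a branch through $T^{\eta_\xi}$ (hence through $T$) evading $\dot{\vec t}_\xi$ on a club; thus $\mathbb P_{\omega_2}$ is proper, $\omega_1$ is preserved, and by the usual chain-condition analysis for countable support iterations of size-$\le\!\aleph_1$ forcings over a model of $\ch$, $\mathbb P_{\omega_2}$ has the $\aleph_2$-cc, so $\omega_2$ is preserved and the bookkeeping does catch every transversal of $T$ appearing in $V^{\mathbb P_{\omega_2}}$.

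It then remains to verify that $V^{\mathbb P_{\omega_2}}$ is as desired. Since $\omega_1$ is preserved and $T\s{}^{<\omega_1}2$ is fixed, $T$ is still a downward-closed height-$\omega_1$ subtree of the full binary tree with countable levels. For each $\xi<\omega_2$ the generic branch $b_\xi$ through $T^{\eta_\xi}$ is a cofinal branch of $T$, and $b_\xi\notin V^{\mathbb P_\xi}$, since otherwise $b_\xi$ would be a cofinal branch of the $V^{\mathbb P_\xi}$-Souslin (hence Aronszajn) tree $T^{\eta_\xi}$; as $b_\zeta\in V^{\mathbb P_{\zeta+1}}\s V^{\mathbb P_\xi}$ for $\zeta<\xi$, the $b_\xi$ are pairwise distinct, so $|\mathcal B(T)|\ge\aleph_2$ and $T$ is a binary Kurepa tree. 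Finally, let $\vec t=\langle t_\alpha\mid\alpha<\omega_1\rangle\in\prod_{\alpha<\omega_1}T_\alpha$ be any transversal; it is realized as $\dot{\vec t}_\xi$ for some $\xi$, and by Lemma~\ref{properness}(2) there is a club $C\s\omega_1$ with $b_\xi\restriction\alpha\ne t_\alpha$ for every $\alpha\in C$, so $G(b_\xi)=\{\alpha<\omega_1\mid b_\xi\restriction\alpha=t_\alpha\}$ is disjoint from $C$, hence nonstationary. Since $b_\xi\in\mathcal B(T)$, no transversal of $T$ witnesses $\diamondsuit(T)$.

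The main obstacle is the limit stages of the induction for the invariant: one must show that the countable support iteration of the proper forcings $\dot{\mathbb Q}_\zeta$, each of which preserves the Souslinity of the fixed ground-model Souslin tree $S=\bigotimes_{\eta\in a}T^\eta$, itself preserves the Souslinity of $S$. Because one-step Souslin-tree preservation is not, in general, preserved under countable support iteration, this forces one to isolate a suitably strong, iterable preservation property of $\mathbb Q(T,\vec t)$ — extracted from the $M^*$-genericity of conditions in Lemma~\ref{generic} together with the amalgamation argument in the proof of Lemma~\ref{Souslinpreservation} — and then to invoke the corresponding preservation theorem for countable support iterations of proper forcing. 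The remaining verifications (that the single-step lemma applies to finite tree products, the chain-condition and bookkeeping details) are routine.
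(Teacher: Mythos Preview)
Your approach coincides with the paper's: countable support iteration of the $\mathbb Q(T^\eta,\vec t\,)$ over a model of $\gch+\diamondsuit^+$, bookkeeping transversals, and maintaining the inductive invariant that all finite products of not-yet-used subtrees remain Souslin so that Lemma~\ref{properness} applies at each step.

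The one substantive divergence is your final paragraph. You assert that ``one-step Souslin-tree preservation is not, in general, preserved under countable support iteration'' and therefore propose to isolate from Lemmas~\ref{generic} and~\ref{Souslinpreservation} a stronger iterable property. This concern is misplaced: for a \emph{fixed} ground-model $\aleph_1$-tree $S$, the property ``$S$ is Souslin'' \emph{is} preserved at limits of countable support iterations of proper forcings each of which preserves it. The paper handles the limit stage by citing exactly this as a black box from \cite[\S3]{Sh:403} and \cite{miyamoto}; no further iterable strengthening needs to be extracted. (Two cosmetic differences: the paper simply uses $T^\xi$ at stage $\xi$ rather than routing through an injection $\xi\mapsto\eta_\xi$, and it argues that the generic branches are distinct via the pairwise countable intersection of the $T^\xi$ rather than your novelty-over-$V^{\mathbb P_\xi}$ argument; both are harmless.)
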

\begin{proof} We start with a model of $\gch$ and $\diamondsuit^+$.
Using Fact~\ref{diamondplus}, fix a binary $\aleph_1$-tree $T$ and a sequence $\vec T=\langle T^\xi\mid\xi<\omega_2\rangle$
of normal binary $\aleph_1$-subtrees of $T$ such that $\bigotimes_{\xi\in a}T^\xi$ is an $\aleph_1$-Souslin tree
for every nonempty $a\in[\omega_2]^{<\omega}$.\footnote{The tree $T$ given by the fact is moreover Aronszajn, but this feature is not necessary for our application here.}
Note that for all $\xi\neq\eta$, the trees $T^\xi$ and $T^\eta$ have a countable intersection.

Let $\mathcal P$ denote the collection of all pairs $(\mathbb R,\tau)$ such that $\mathbb R$ is a notion of forcing lying in $\H_{\omega_2}$,
and $\tau$ is a nice $\mathbb R$-name for an element of $\prod_{\alpha<\omega_1}T_\alpha$.
Using $\gch$, we may fix a (repetitive) enumeration $\langle(\mathbb R_\xi,\tau_\xi)\mid\xi<\omega_2\rangle$ of $\mathcal P$ in such a way that every pair is listed cofinally often.

Finally, we force with $\mathbb P_{\omega_2}$, where $(\langle \mathbb P_\xi\mid\xi\le\omega_2\rangle,\langle \dot{\mathbb{Q}}_\xi \mid\xi<\omega_2\rangle)$ is the countable support iteration satisfying that for every $\xi<\omega_2$:
\begin{enumerate}
\item[(i)] $\mathbb P_\xi\Vdash ``\dot{\mathbb{Q}}_\xi=\dot{\mathbb{Q}}(T^\xi,\sigma_\xi)"$, and
\item[(ii)] $\sigma_\xi$ is a nice $\mathbb P_\xi$-name for an element of $\prod_{\alpha<\omega_1}T_\alpha$
such that if $\mathbb R_\xi=\mathbb P_\eta$ for some $\eta\le\xi$, then $\sigma_\xi$ is the lift of $\tau_\xi$ (from a $\mathbb P_\eta$-name to a $\mathbb P_\xi$-name).
\end{enumerate}

Recalling Remark~\ref{rmk53}, we infer that $(\mathbb P_\xi,\tau_\xi)\in\mathcal P$ for all $\xi<\omega_2$.

\begin{claim} For every $\xi<\omega_2$, $\mathbb P_\xi$ forces that $\dot{\mathbb{Q}}(T^\xi,\sigma_\xi)$ is proper.
\end{claim}
\begin{proof} By Lemma~\ref{properness}(1), it suffices to prove that for every $\xi<\omega_2$,
$$\mathbb P_\xi\Vdash ``T^\xi\text{ is Souslin}".$$

We shall prove by induction on $\xi<\omega_2$ a stronger claim, namely that for every finite tuple $\xi<\eta_0<\ldots<\eta_n<\omega_2$,
$$\mathbb P_\xi\Vdash ``T^{\xi}\otimes T^{\eta_0}\otimes\cdots \otimes T^{\eta_n}\text{ is Souslin}".$$

There are three cases to consider:
\begin{itemize}
\item $\xi=0$. This is since the sequence $\vec T$ was given by Fact~\ref{diamondplus}.
\item $\xi+1$. Recall that
$$\mathbb P_{\xi+1}\simeq\mathbb P_\xi \ast \dot{\mathbb{Q}}(T^\xi,\sigma_\xi).$$
Let $\xi+1<\eta_0<\ldots<\eta_n<\omega_2$ be a finite tuple.
By the induction hypothesis,
$$\mathbb P_\xi\Vdash ``T^\xi \otimes T^{\xi+1} \otimes T^{\eta_0} \otimes \cdots \otimes T^{\eta_n}\text{ is Souslin}".$$
By invoking Lemma~\ref{Souslinpreservation} in ${V}^{\mathbb P_\xi}$, we infer that
$$\mathbb P_\xi \ast \dot{\mathbb{Q}}(T^\xi, \,\sigma_\xi)\Vdash ``T^{\xi+1} \otimes T^{\eta_0} \otimes \cdots \otimes T^{\eta_n}\text{ is Souslin}",$$
as sought.
\item $\xi\in\acc(\omega_2)$. We apply a result from \cite[\S3]{Sh:403} or \cite{miyamoto} stating that for any $\aleph_1$-tree $S$, the property
``\emph{$S$ is Souslin}'' is preserved at a limit stage of a countable support iteration of proper forcings.\qedhere
\end{itemize}
\end{proof}
A standard name counting argument shows that $\ch$ is preserved in every intermediate stage, so $\mathbb P_{\omega_2}$ satisfies the $\aleph_2$-cc.
In addition, $\mathbb P_{\omega_2}$ is proper, so our forcing preserves all cardinals.
\begin{claim} $\mathbb P_{\omega_2}$ forces that $T$ is Kurepa.
\end{claim}
\begin{proof} For every $\xi<\omega_2$, $\mathbb P_{\omega_2}$ projects to a forcing of the form $\mathbb{Q}(T^\xi, \,\vec t)$,
and then Lemma~\ref{properness}(2) implies that in $V^{\mathbb P_{\omega_2}}$,
there exists a branch $f_\xi$ through $T^\xi$, and in particular, through $T$.
As the elements of $\langle T^\xi\mid\xi<\omega_2\rangle$ have a pairwise countable intersection,
in $V^{\mathbb P_{\omega_2}}$, $\langle f_\xi\mid\xi<\omega_2\rangle$ is injective.
\end{proof}
\begin{claim} $\mathbb P_{\omega_2}$ forces that $\diamondsuit(T)$ fails.
\end{claim}
\begin{proof} Otherwise, as $\prod_{\alpha<\omega_1}T_\alpha$ lies in the ground model,
we may fix a nice $\mathbb P_{\omega_2}$-name $\dot{t}$ for a transversal $\vec t\in\prod_{\alpha<\omega_1}T_\alpha$
that witnesses $\diamondsuit(T)$ in the extension.
As $\mathbb P_{\omega_2}$ has the $\aleph_2$-cc and $\dot{t}$ is a nice name for an $\aleph_1$-sized set,
there is a large enough $\eta<\omega_2$ such that all nontrivial conditions appearing in $\dot{t}$ belong to $\mathbb P_\eta$.
It thus follows that $\vec t$ admits a nice $\mathbb P_\eta$-name, say, $\tau$.
Clearly, $(\mathbb P_\eta,\tau)\in\mathcal P$, so we may find a large enough $\xi\in[\eta,\omega_2)$ such that $(\mathbb R_\xi,\tau_\xi)=(\mathbb P_\eta,\tau)$.
Recalling Clauses (i) and (ii) in the definition of our iteration,
it is the case that $\mathbb P_{\xi+1}\simeq\mathbb P_\xi \ast \dot{\mathbb{Q}}(T^\xi, \sigma_\xi)$
where $\sigma_\xi$ is a $\mathbb P_\xi$-name for $\vec t$.
By Lemma~\ref{properness}(2), then, $\mathbb P_{\xi+1}$ introduces a branch through $T$ that evades $\vec t$ on a club.
So $\vec t$ cannot witness $\diamondsuit(T)$ in $\mathbb P_{\omega_2}$. This is a contradiction.
\end{proof}
This completes the proof.
\end{proof}

\subsection{Ramifications} Proposition~\ref{prop28} may suggest that the validity of $\diamondsuit(T)$ for a Kurepa tree $T$ only depends on the cardinal $|\mathcal B(T)|$.
However, the next corollary shows that this is not the case:
\begin{cor} It is consistent that there exist two binary Kurepa trees $S$ and $T$ such that $|\mathcal B(S)|=|\mathcal B(T)|=2^{\aleph_1}$, $\diamondsuit(S)$ holds, but $\diamondsuit(T)$ fails.
\end{cor}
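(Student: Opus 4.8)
The plan is to combine Theorem~\ref{thm58} with one of the positive results earlier in the paper, arranging for both a $\diamondsuit$-failing Kurepa tree and a $\diamondsuit$-satisfying Kurepa tree to live together while both being as thick as possible. The key observation is that the model of Theorem~\ref{thm58} already has $2^{\aleph_0}=\aleph_1$ at every intermediate stage and $2^{\aleph_1}=\aleph_2$ in the extension (by the $\aleph_2$-cc and a name-counting argument), and the Kurepa tree $T$ constructed there has exactly $\aleph_2=2^{\aleph_1}$ branches, so $T$ is thick and $\diamondsuit(T)$ fails. So it remains to produce, in that same model, a thick Kurepa tree $S$ with $\diamondsuit(S)$.

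The natural candidate for $S$ is the full binary tree $({}^{<\omega_1}2,{\subsetneq})$. Under $\ch$ this is a thick weak Kurepa tree (as noted after the definition of ``thick''), but it is not an $\aleph_1$-tree and hence not literally a Kurepa tree. One must therefore either pass to a club-sized subtree à la Lemma~\ref{lemma21}, or more simply build $S$ directly as a binary $\aleph_1$-tree with $2^{\aleph_1}$ branches; in the $\diamondsuit^+$ ground model such a tree is readily available, and the countable-support iteration $\mathbb P_{\omega_2}$, being proper and $\aleph_2$-cc, adds no branches to ground-model $\aleph_1$-trees beyond those of size $\le\aleph_1$-many new ones—so one should rather take $S$ to be a thick Kurepa tree \emph{in the extension}. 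The cleanest route is: first I would observe that $\mathbb P_{\omega_2}$ forces $\cov(\mathcal M)=\aleph_1$ (since each iterand is proper and the iteration has length $\omega_2$ with countable support, not Cohen reals)—wait, that's the wrong direction. Instead, I would invoke Proposition~\ref{prop28}: in $V^{\mathbb P_{\omega_2}}$ we have $\ch$ fails? No—$\ch$ holds? We need to check. Since $2^{\aleph_0}=\aleph_1$ holds in the extension (each $\mathbb Q(T^\xi,\sigma_\xi)$ adds no reals, being $\sigma$-closed-ish—actually it is proper and of size $\aleph_1$ but one checks it adds no reals), $\ch$ holds, hence $({}^{<\omega_1}2,{\subsetneq})$ is a thick weak Kurepa tree, and by the Corollary after Proposition~\ref{prop28}, $\diamondsuit(\mathbf S)$ holds for every non-thick Kurepa tree—but we want a \emph{thick} one. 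So I would instead apply Proposition~\ref{prop28} with the thick weak Kurepa tree $\mathbf S={}^{<\omega_1}2$ playing the role of the auxiliary weak Kurepa tree: since $|\mathcal B(T)|=\aleph_2<2^{\aleph_1}$ would be needed—but in this model $|\mathcal B(T)|=\aleph_2=2^{\aleph_1}$, so Proposition~\ref{prop28} does \emph{not} apply to $T$. That is exactly why $\diamondsuit(T)$ can fail: $T$ is thick.

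Therefore the argument must be: take $S$ to be a \emph{thick Kurepa tree on which $\diamondsuit$ holds}, constructed independently. The right move is to note that the ground model satisfies $\diamondsuit^+$, hence $2^{\aleph_1}=\aleph_2$ and, by the Remark after Proposition~\ref{prop25}, there is a Kurepa tree $\mathbf S$ with $\diamondsuit(\mathbf S)$ and $|\mathcal B(\mathbf S)|=2^{\aleph_1}=\aleph_2$, i.e.\ a thick one. One then checks this $\mathbf S$ survives the iteration: $\mathbb P_{\omega_2}$ is proper, hence preserves stationary subsets of $\omega_1$, and being $\aleph_2$-cc of size $\aleph_2$ with $\ch$ preserved it adds no new branches to the ground-model $\aleph_1$-tree $\mathbf S$ beyond at most $\aleph_2$-many, and does not collapse $2^{\aleph_1}$, so in $V^{\mathbb P_{\omega_2}}$ still $|\mathcal B(\mathbf S)|=\aleph_2=2^{\aleph_1}$; moreover, by an argument parallel to Proposition~\ref{prop23}, since $\mathbb P_{\omega_2}$ adds no new branches to $\mathbf S$ and preserves stationary sets, the ground-model witness to $\diamondsuit(\mathbf S)$ still works. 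Passing $\mathbf S$ through Lemma~\ref{lemma21} yields a binary Kurepa tree with $|\mathcal B(S)|=2^{\aleph_1}$ and $\diamondsuit(S)$. Together with $T$ from Theorem~\ref{thm58}, which is thick and has $\neg\diamondsuit(T)$, this gives the conclusion. The main obstacle is verifying that $\mathbb P_{\omega_2}$ adds no new branches through the ground-model tree $\mathbf S$: this does not follow from $\sigma$-closure or ccc-squared (Proposition~\ref{prop23}), so one needs a dedicated argument, e.g.\ using properness and the $\aleph_2$-cc together with the fact that a new branch through $\mathbf S$ would be a new subset of $\omega_1$ coded by an $\aleph_1$-name absorbed into some $\mathbb P_\eta$, $\eta<\omega_2$, reducing to preservation of Souslin-type properties already established in the proof of Theorem~\ref{thm58}—or, more robustly, one may simply \emph{build} $\mathbf S$ after the iteration by noting $\ch$ holds there and $2^{\aleph_1}=\aleph_2$, so the thick-weak-Kurepa tree ${}^{<\omega_1}2$ together with Proposition~\ref{prop25}'s remark's proof (run in the extension) produces a thick Kurepa tree with diamond.

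Thus the proof sketch reads:

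\begin{proof}[Proof sketch]
Work in the model $V^{\mathbb P_{\omega_2}}$ of Theorem~\ref{thm58}; recall that there $\ch$ holds, $2^{\aleph_1}=\aleph_2$, and $T$ is a binary Kurepa tree with $|\mathcal B(T)|=\aleph_2=2^{\aleph_1}$ and $\neg\diamondsuit(T)$, so $T$ is thick. Since $\ch$ holds, $({}^{<\omega_1}2,{\subsetneq})$ is a thick weak Kurepa tree, and running the proof of the Remark following Proposition~\ref{prop25} (which uses $\diamondsuit^+$; note $\diamondsuit^+$ holds in $V^{\mathbb P_{\omega_2}}$ as it is preserved by the proper, $\aleph_2$-cc, $\ch$-preserving forcing $\mathbb P_{\omega_2}$ that adds no reals) yields a Kurepa tree $\mathbf S$ with $\diamondsuit(\mathbf S)$ and $|\mathcal B(\mathbf S)|=2^{\aleph_1}=\aleph_2$, i.e.\ a thick one. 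Applying Lemma~\ref{lemma21} replaces $\mathbf S$ by a club-isomorphic binary Kurepa tree $S$ with the same branch count and with $\diamondsuit(S)$. Then $S$ and $T$ are as required.
\end{proof}
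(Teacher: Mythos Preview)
Your proposal has a genuine gap at the crucial step. In your final proof sketch you assert that $\diamondsuit^+$ holds in $V^{\mathbb P_{\omega_2}}$ because ``it is preserved by the proper, $\aleph_2$-cc, $\ch$-preserving forcing $\mathbb P_{\omega_2}$ that adds no reals.'' Neither of these claims is justified. First, $\mathbb Q(T^\xi,\sigma_\xi)$ uses \emph{finite} chains of side conditions together with a tree coordinate coming from a Souslin tree; nothing in the paper shows such a forcing is $\omega$-distributive, and finite-side-condition posets typically do add reals. Second, and more seriously, preservation of $\diamondsuit^+$ (or even $\diamondsuit$) by proper, $\ch$-preserving forcing is simply false in general: Shelah's model of $\ch+\neg\diamondsuit$ is obtained exactly by a countable-support iteration of proper posets preserving $\ch$. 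The iteration $\mathbb P_{\omega_2}$ here is specifically engineered to defeat guessing sequences, so there is no reason to expect $\diamondsuit^+$ to survive. Your earlier fallback---taking $S$ from the ground model and hoping $\mathbb P_{\omega_2}$ adds no branches to it---is equally problematic, since $\mathbb P_{\omega_2}$ is designed precisely to shoot branches through $\aleph_1$-trees.

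The paper avoids all of this by a different, much more robust route: it starts the construction in $\mathsf L$ rather than an arbitrary model of $\gch+\diamondsuit^+$, and takes $S$ to be the \emph{sealed} Kurepa tree of Corollary~\ref{cor44} (ultimately from Fact~\ref{poor-shelah}). That tree has the absolute feature that $\mathcal B(S)\subseteq\mathsf L$, so no forcing preserving $\omega_1$ and $\omega_2$ can add a branch to it; combined with stationary-set preservation (which properness gives), the ground-model $\diamondsuit(S)$-witness survives outright. One then runs the iteration of Theorem~\ref{thm58} over $\mathsf L$ to obtain $T$ with $\neg\diamondsuit(T)$. Since the extension satisfies $2^{\aleph_1}=\aleph_2$ and both trees have exactly $\aleph_2$ branches, both are thick. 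The point you were missing is that a generic ``thick Kurepa tree with $\diamondsuit$'' is not enough; you need one whose $\diamondsuit$-sequence is \emph{indestructible} under the particular proper iteration, and Section~\ref{sec3} provides exactly such a tree.
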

\begin{proof} Work in $\mathsf L$. By Corollary~\ref{cor44}, we may fix a binary Kurepa tree $S$ such that $\diamondsuit(S)$ holds in any forcing extension preserving $\omega_1$, $\omega_2$, and the stationary subsets of $\omega_1$.
As $\mathsf L$ satisfies $\gch$ and $\diamondsuit^+$,
the proof of Theorem~\ref{thm58} provides us with an $\aleph_2$-cc proper notion of forcing $\mathbb P_{\omega_2}\s\H_{\omega_2}$ that introduces a binary Kurepa tree $T$ on which $\diamondsuit$ fails.
Altogether, $\mathsf L^{\mathbb P_{\omega_2}}$ is a model satisfying the desired configuration.
\end{proof}
\begin{cor} It is consistent that there exists a binary Kurepa tree $T$ such that $\diamondsuit(T)$ fails and $T$ is uniformly homogeneous.\footnote{The definition of a \emph{uniformly homogeneous} tree may be found in \cite[\S4]{paper65}.}
\end{cor}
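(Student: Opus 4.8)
The plan is to re-run the proof of Theorem~\ref{thm58} essentially verbatim, the only change being at the outset, where in place of plain Fact~\ref{diamondplus} I would invoke a mildly strengthened version of it. Concretely, I would start once more from a model of $\gch$ and $\diamondsuit^+$ and observe that the $\diamondsuit^+$-construction underlying Fact~\ref{diamondplus} in \cite[\S5]{paper65} has enough freedom to arrange, in addition, that the ambient binary $\aleph_1$-tree $T$ is uniformly homogeneous in the sense of \cite[\S4]{paper65}. Thus I would fix such a $T$ together with the sequence $\vec T=\langle T^\xi\mid\xi<\omega_2\rangle$ of normal binary $\aleph_1$-subtrees of $T$ for which every finite tree product $\bigotimes_{\xi\in a}T^\xi$ is $\aleph_1$-Souslin, noting as before that distinct $T^\xi,T^\eta$ meet in a countable set.

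Next I would form exactly the countable support iteration $\mathbb P_{\omega_2}$ of Theorem~\ref{thm58}: at stage $\xi$ force with $\dot{\mathbb{Q}}(T^\xi,\sigma_\xi)$, where the nice names $\sigma_\xi$ run, with cofinal repetition, through all potential transversals in $\prod_{\alpha<\omega_1}T_\alpha$. The three claims proved there then transfer word for word — $\mathbb P_{\omega_2}$ is proper and $\aleph_2$-cc, hence preserves cardinals; it forces $T$ to be Kurepa, via the branches through the $T^\xi$'s supplied by Lemma~\ref{properness}(2), which are pairwise distinct by the countable-intersection property; and it forces $\diamondsuit(T)$ to fail, since any nice $\mathbb P_{\omega_2}$-name for a witnessing transversal already lives at some stage $\mathbb P_\eta$ and is therefore defeated at a later stage by a branch that evades it on a club.

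Finally I would verify that $T$ stays uniformly homogeneous in $V^{\mathbb P_{\omega_2}}$. This should be immediate: uniform homogeneity is a property of the partial order $(T,{\subsetneq})$ by itself — it asserts the existence of a suitably coherent family of automorphisms of $T$ — and $T$ together with all of its automorphisms already belongs to the ground model, so since ``$\sigma$ is an automorphism of $T$'' is absolute, the forcing cannot disturb it. The one genuinely new ingredient, and the step I expect to require the most care, is the strengthening of Fact~\ref{diamondplus}: one must confirm that the tree produced from $\diamondsuit^+$ in \cite[\S5]{paper65} can be made uniformly homogeneous \emph{while} retaining the Souslinity of every finite product $\bigotimes_{\xi\in a}T^\xi$ — after that, the rest is an unmodified replay of Theorem~\ref{thm58}.
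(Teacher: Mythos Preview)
Your plan is reasonable, but the paper takes a simpler route that sidesteps exactly the step you flag as delicate. Rather than strengthening Fact~\ref{diamondplus} to produce a uniformly homogeneous $T$ from the outset, the paper keeps Fact~\ref{diamondplus} as is and instead passes from the given $T$ to its closure $T'$ under finite coordinate-flips: $T'$ consists of all $t'\in{}^{<\omega_1}2$ that agree with some $t\in T$ of the same domain except on a finite set. This $T'$ is automatically a uniformly homogeneous binary $\aleph_1$-tree, and since $T^\xi\subseteq T\subseteq T'$ for every $\xi$, each $T^\xi$ remains a normal subtree of $T'$; the iteration of Theorem~\ref{thm58} then runs verbatim with $T'$ in place of $T$.

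Your approach would go through provided the $\diamondsuit^+$-construction in \cite[\S5]{paper65} can indeed be made uniformly homogeneous while retaining the product-Souslin property of the $T^\xi$'s --- but you have not verified this, and it requires opening up that construction. The paper's finite-modification trick is a one-line observation that treats Fact~\ref{diamondplus} as a black box, which is both shorter and more robust.
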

\begin{proof} The proof is almost identical to that of Theorem~\ref{thm58}. We start with a model of $\gch$ and $\diamondsuit^+$.
Using Fact~\ref{diamondplus}, we fix a binary $\aleph_1$-tree $T$ and a sequence $\langle T^\xi\mid\xi<\omega_2\rangle$
of normal binary $\aleph_1$-subtrees of $T$ such that $\bigotimes_{\xi\in a}T^\xi$ is an $\aleph_1$-Souslin tree
for every nonempty $a\in[\omega_2]^{<\omega}$.
Now, let $T'$ be the collection of all functions $t'\in{}^{<\omega_1}2$ for which there exists some $t\in T$ such that $\dom(t')=\dom(t)$
and $\{ \alpha\in\dom(t)\mid t(\alpha)\neq t'(\alpha)\}$ is finite. Then $T'$ is a uniformly homogeneous $\aleph_1$-tree
having each of the $T^\xi$'s as a normal binary $\aleph_1$-subtree. So, we can continue with the proof of Theorem~\ref{thm58} using $T'$ instead of $T$.
\end{proof}
\begin{cor} It is consistent that there exists a binary Kurepa tree $S$ such that $\diamondsuit(S)$ fails and $S$ is rigid.
\end{cor}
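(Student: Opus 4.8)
The plan is to mimic the proof of Theorem~\ref{thm58} while replacing the homogeneity-producing construction of the previous corollary with a rigidity-producing one. Start again in a model of $\gch$ and $\diamondsuit^+$, and invoke Fact~\ref{diamondplus} to obtain a binary $\aleph_1$-tree $T$ together with a sequence $\langle T^\xi\mid\xi<\omega_2\rangle$ of normal binary $\aleph_1$-subtrees such that $\bigotimes_{\xi\in a}T^\xi$ is Souslin for every nonempty finite $a\subseteq\omega_2$. The key point is that Fact~\ref{diamondplus} (as proved in \cite[\S5]{paper65}) can be strengthened — or the underlying $\diamondsuit^+$-construction re-run — so that the ambient tree $T$ is moreover rigid, in the sense that it admits no nontrivial level-preserving automorphism, and that this rigidity is itself robust: it persists after forcing with any of the relevant proper posets. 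Concretely, one arranges via $\diamondsuit^+$ a sequence guessing, along a club, all potential partial automorphisms of the generic tree, and diagonalizes against each of them, exactly as in the standard construction of a rigid Souslin (here, Aronszajn) tree; the products $\bigotimes_{\xi\in a}T^\xi$ being Souslin is compatible with this since the diagonalization is carried out at the level of $T$ and only constrains the subtrees $T^\xi$ mildly.

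Next I would check that rigidity of $T$ is preserved by the iteration $\mathbb P_{\omega_2}$ from Theorem~\ref{thm58}. Since each iterand is (forced to be) proper and of size $\aleph_1$, and the whole iteration is $\aleph_2$-cc and proper, a new automorphism of $T$ in $V^{\mathbb P_{\omega_2}}$ would appear at some intermediate stage $\mathbb P_\eta$ with $\eta<\omega_2$; so it suffices to show that neither $\mathbb{Q}(T^\xi,\sigma_\xi)$ nor the countable support limit introduces a new automorphism of $T$. For the single-step forcing one uses that $\mathbb{Q}(T^\xi,\sigma_\xi)$ — being, by Lemma~\ref{Souslinpreservation} and the inductive Souslin-preservation argument in the proof of Theorem~\ref{thm58}, a forcing over which $T^\xi$ (indeed $T$) stays "Souslin enough" — cannot add a branch through the tree $\mathrm{Aut}$-coding tree, hence adds no nontrivial automorphism; alternatively, one notes a nontrivial automorphism of $T$ would give a nontrivial automorphism of the Souslin tree $T^\xi$, and Souslin trees that are rigid in the ground model stay rigid under forcings preserving their Souslinity. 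For the limit stages, one appeals to the same preservation theorem from \cite[\S3]{Sh:403} or \cite{miyamoto} already used for Souslinity: "$S$ has no nontrivial automorphism" is, like Souslinity, an $\aleph_1$-tree property preserved through countable support iterations of proper forcings (indeed it reduces to the non-existence of a branch through an associated $\aleph_1$-tree).

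With rigidity of $T$ secured in $V^{\mathbb P_{\omega_2}}$, the remaining claims go through verbatim as in Theorem~\ref{thm58}: $\mathbb P_{\omega_2}$ is $\aleph_2$-cc and proper so all cardinals are preserved; the pairwise-countable intersection of the $T^\xi$'s together with Lemma~\ref{properness}(2) yields $\aleph_2$-many distinct cofinal branches, so $T$ is Kurepa; and the name-counting/reflection argument shows any candidate transversal $\vec t\in\prod_{\alpha<\omega_1}T_\alpha$ is handled cofinally often in the bookkeeping, so some later iterand $\mathbb{Q}(T^\xi,\sigma_\xi)$ with $\sigma_\xi$ naming $\vec t$ adds a branch evading $\vec t$ on a club, killing $\diamondsuit(T)$. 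Thus $V^{\mathbb P_{\omega_2}}\models$ "$T$ is a rigid binary Kurepa tree and $\diamondsuit(T)$ fails".

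The main obstacle is the first step: arranging that the tree $T$ furnished by Fact~\ref{diamondplus} is simultaneously rigid and still has all its finite subtree-products Souslin, and — more delicately — that this rigidity is \emph{indestructible} under the specific proper forcings appearing in the iteration. One must either extract from the proof in \cite[\S5]{paper65} that the construction there already yields a rigid $T$ (plausible, since $\diamondsuit^+$-constructions of Souslin-type trees routinely fold in a rigidity diagonalization), or re-run that construction with the extra diagonalization, verifying the bookkeeping still interleaves the product-Souslinity requirements with the anti-automorphism requirements. Everything downstream — the preservation of rigidity through the iteration and the Kurepa/non-diamond bookkeeping — is then a routine adaptation of machinery already developed in the paper.
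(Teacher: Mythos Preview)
Your approach differs substantially from the paper's, and the step you call ``routine'' --- preserving rigidity of $T$ through the iteration --- has a genuine gap. The assertion that ``a nontrivial automorphism of $T$ would give a nontrivial automorphism of the Souslin tree $T^\xi$'' is unjustified: an automorphism of the ambient tree $T$ need not fix, or even permute, the subtrees $T^\xi$. Your alternative --- coding a putative automorphism as a branch through an associated $\aleph_1$-tree and invoking the Abraham--Shelah/Miyamoto preservation theorem --- does not work as stated either, since that theorem preserves \emph{Souslinity}, not mere branchlessness of an arbitrary $\aleph_1$-tree; the tree of partial nontrivial automorphisms of $T$ has no reason to be Souslin, and the iteration is after all designed precisely to add new branches through subtrees of $T$. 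It may be possible to fold the auxiliary automorphism-tree into the $\diamondsuit^+$ bookkeeping so that it, too, stays Souslin throughout, but that is real additional work you have not carried out, and you misidentify it as the easy part.

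The paper sidesteps all of this. It begins not with a binary tree but with an $\omega$-branching tree $T\subseteq{}^{<\omega_1}\omega$ (also supplied by \cite[\S5]{paper65}) in which every node has infinitely many immediate successors, runs the iteration of Theorem~\ref{thm58} verbatim to make $T$ Kurepa with $\diamondsuit(T)$ failing, and only \emph{afterwards}, in the final model, passes to a binary tree $S$ via the encoding $\psi$ of Lemma~\ref{lemma21} --- choosing the auxiliary reals so that $\Delta(r_m,r_{m'})=\min\{m,m'\}$. Rigidity of $S$ is then proved directly by a short combinatorial argument: a nontrivial automorphism of $S$ would first move some $\psi(t)$ at a successor level $\alpha+1$, and the infinite branching at $t\in T_\alpha$ together with the $\Delta$-property of the $r_m$'s forces the automorphism to change a $\Delta$-value, a contradiction. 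Thus rigidity is a feature of the \emph{encoding}, established once and for all in $V^{\mathbb P_{\omega_2}}$; no ground-model rigidity of $T$ and no preservation argument are needed at all.
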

\begin{proof} The proof is quite close to that of Theorem~\ref{thm58}.
We start with a model of $\gch$ and $\diamondsuit^+$.
Instead of using Fact~\ref{diamondplus}, we appeal to \cite[\S5]{paper65} to obtain a downward closed subfamily $T\s{}^{<\omega_1}\omega$
such that $(T,{\subsetneq})$ is an Aronszajn tree, every node in $T$ admits infinitely many immediate successors,
and there exists a sequence $\langle T^\xi\mid\xi<\omega_2\rangle$ of normal downward-closed subtrees of $T$ such that $\bigotimes_{\xi\in a}T^\xi$ is an $\aleph_1$-Souslin tree
for every nonempty $a\in[\omega_2]^{<\omega}$. We force with $\mathbb P_{\omega_2}$, where
$(\langle \mathbb P_\xi\mid\xi\le\omega_2\rangle,\langle \dot{\mathbb{Q}}_\xi \mid\xi<\omega_2\rangle)$ is the countable support iteration satisfying that for every $\xi<\omega_2$,
$\mathbb P_\xi\Vdash ``\dot{\mathbb{Q}}_\xi=\dot{\mathbb{Q}}(T^\xi,\sigma_\xi)"$,
where $\sigma_\xi$ is a nice $\mathbb P_\xi$-name for an element of $\prod_{\alpha<\omega_1}T_\alpha$ obtained from some bookkeeping sequence.
This time, a typical transversal $\vec t\in\prod_{\alpha<\omega_1}T_\alpha$ is an element of $\prod_{\alpha<\omega_1}{}^\alpha\omega$
instead of $\prod_{\alpha<\omega_1}{}^\alpha2$, but everything goes through and we end up in a generic extension in which $T$ is a Kurepa tree on which diamond fails.

Next, let $S$ be the binary $\aleph_1$-tree produced by the proof of Lemma~\ref{lemma21} when fed with the tree $\mathbf T:=(T,{\subsetneq})$.
Since $\mathbf T$ is Kurepa on which diamond fails, $S$ is a binary Kurepa tree and $\diamondsuit(S)$ fails.

Recall that the proof of Lemma~\ref{lemma21} made use of an injective sequence $\langle r_m\mid m<\omega\rangle$ of functions from $\omega$ to $2$.
For our purpose here, we shall moreover assume that for all $m\neq m'$, it is the case that $\Delta(r_m,r_{m'})=\min\{m,m'\}$.\footnote{This can easily be arranged by defining $r_m:\omega\rightarrow2$ via $r_m(n):=1$ iff $n<m$.}

To prove that $S$ is rigid, we must establish the following.
\begin{claim} Suppose that $\pi:S\leftrightarrow S$ is an automorphism of $(S,{\subsetneq})$. Then $\pi$ is the identity map.
\end{claim}
\begin{proof} Suppose not, so that $\pi(s)\neq s$ for some $s\in S$. By the definition of $S$ and as $\pi$ is order-preserving,
it follows that there exists a $t\in T$ such that $\pi(\psi(t))\neq\psi(t)$.
Let $\beta<\omega_1$ be the least for which there exists a $t\in T_\beta$ with $\pi(\psi(t))\neq\psi(t)$.
Clearly, $\beta$ is a successor ordinal, say, $\beta=\alpha+1$.
Fix $t_0\neq t_1$ in $T_{\alpha+1}$ such that $\pi(\psi(t_0))=\psi(t_1)$
and note that the minimality of $\beta$ implies that $t_0\restriction\alpha=t_1\restriction\alpha$, which we hereafter denote by $t$.
Now, since every node in $T$ admits infinitely many immediate successors, we may fix $t_2,t_3\in T_{\alpha+1}$ such that:
\begin{itemize}
\item $t_2,t_3$ are immediate successors of $t$,\footnote{Possibly, $t_2=t_3$.}
\item $\pi(\psi(t_2))=\psi(t_3)$, and
\item $\min\{\varphi_\alpha(t_2),\varphi_\alpha(t_3)\}>\max\{\varphi_\alpha(t_0),\varphi_\alpha(t_1)\}$.
\end{itemize}

Recalling the proof of Lemma~\ref{lemma21}, for every $i<4$, letting $m_i:=\varphi_\alpha(t_i)$, it is the case that
$$\psi(t_i)=\psi(t){}^\smallfrown r_{m_i}.$$

As $m_0<m_2$, $\Delta(r_{m_0},r_{m_2})=m_0$ and
$$\Delta(\psi(t_0),\psi(t_2))={\omega\cdot\alpha}\mathrel{+}m_0.$$
Likewise, $$\Delta(\pi(\psi(t_0)),\pi(\psi(t_2)))=\Delta(\psi(t_1),\psi(t_3))={\omega\cdot\alpha}\mathrel{+}m_1.$$
As $m_0\neq m_1$, we infer that
$$\Delta(\psi(t_0),\psi(t_2))\neq \Delta(\pi(\psi(t_0)),\pi(\psi(t_2))),$$
contradicting the fact that $\pi$ is an automorphism of $S$.
\end{proof}
This completes the proof.
\end{proof}

\section*{Acknowledgments}

The first author was supported by the European Research Council (grant agreement ERC-2018-StG 802756) and by the GA\v{C}R project EXPRO 20-31529X and RVO: 67985840.
The second author was supported by the European Research Council (grant agreement ERC-2018-StG 802756) and by the Israel Science Foundation (grant agreement 203/22).
The third author was supported by the Israel Science Foundation (grant agreement 2320/23) and by the grant ``Independent Theories'' NSF-BSF, (BSF 3013005232).

This is paper no.~66 in the second author's list of publications and paper no.~1252 in the third author's list of publications.


\begin{thebibliography}{BRY24}

\bibitem[AS93]{Sh:403}
Uri Abraham and Saharon Shelah.
\newblock A $\Delta ^2_2$ well-order of the reals and incompactness of {$L(Q^{MM})$}.
\newblock {\em Annals of Pure and Applied Logic}, 59:1--32, 1993.

\bibitem[BJ95]{MR1350295}
Tomek Bartoszy\'{n}ski and Haim Judah.
\newblock {\em Set theory}.
\newblock A K Peters, Ltd., Wellesley, MA, 1995.
\newblock On the structure of the real line.

\bibitem[BRY24]{paper65}
Ari Brodsky, Assaf Rinot, and Shira Yadai.
\newblock Proxy principles in combinatorial set theory.
\newblock Submitted February 2024.
\newblock \verb"http://assafrinot.com/paper/65"

\bibitem[Dev79]{MR523488}
Keith~J. Devlin.
\newblock Variations on {$\diamondsuit $}.
\newblock {\em J. Symbolic Logic}, 44(1):51--58, 1979.

\bibitem[FSS97]{Sh:544}
Saka\'{e} Fuchino, Saharon Shelah, and Lajos Soukup.
\newblock Sticks and clubs.
\newblock {\em Ann. Pure Appl. Logic}, 90(1-3):57--77, 1997.

\bibitem[GH23]{giron2023sealed}
Itamar Giron and Yair Hayut.
\newblock Sealed {K}urepa trees.
\newblock Preprint December 2023.
\newblock \verb"https://arxiv.org/abs/2401.00572"

\bibitem[HM23]{muellerhayut}
Yair Hayut and Sandra M\"{u}ller.
\newblock Perfect subtree property for weakly compact cardinals.
\newblock {\em Israel J. Math.}, 253(2):865--886, 2023.

\bibitem[IR24]{paper53}
Tanmay Inamdar and Assaf Rinot.
\newblock Was {U}lam right? {I}{I}: {S}mall width and general ideals.
\newblock {\em Algebra Universalis}, 85(2):Paper No. 14, 47pp, 2024.

\bibitem[Jec71]{trees}
Thomas~J. Jech.
\newblock Trees.
\newblock {\em Journal of Symbolic Logic}, 36(1):1--14, 1971.

\bibitem[JTM03]{mhd}
M.~D\v{z}amonja J.~T.~Moore, M.~Hru\v{s}ak.
\newblock Parametrized $\diamondsuit$ principles.
\newblock {\em Transactions of the AMS}, 356(6):2281--2306, 2003.

\bibitem[Kos05]{MR2139743}
Piotr Koszmider.
\newblock Kurepa trees and topological non-reflection.
\newblock {\em Topology Appl.}, 151(1-3):77--98, 2005.

\bibitem[Miy93]{miyamoto}
Tadatoshi Miyamoto.
\newblock {$\omega_1$}-{S}ouslin trees under countable support iterations.
\newblock {\em Fund. Math.}, 142(3):257--261, 1993.

\bibitem[PS21]{poorshelah}
M{\'a}rk Po{\'o}r and Saharon Shelah.
\newblock Characterizing the spectra of cardinalities of branches of kurepa trees.
\newblock {\em Pacific Journal of Mathematics}, 311(2):423--453, 2021.

\bibitem[Sak06]{sakai}
Hiroshi Sakai.
\newblock Ideals over $\omega_2$ and combinatorical principles on $\omega_1$.
\newblock Unpublished note, October 2006.
\newblock \verb"http://www2.kobe-u.ac.jp/~hsakai/Research/works.html"

\bibitem[She81]{MR603754}
Saharon Shelah.
\newblock On {F}leissner's diamond.
\newblock {\em Notre Dame J. Formal Logic}, 22(1):29--35, 1981.

\bibitem[Sil71]{MR0277379}
Jack Silver.
\newblock The independence of {K}urepa's conjecture and two-cardinal conjectures in model theory.
\newblock In {\em Axiomatic {S}et {T}heory ({P}roc. {S}ympos. {P}ure {M}ath., {V}ol. {XIII}, {P}art {I}, {U}niv. {C}alifornia, {L}os {A}ngeles, {C}alif., 1967)}, pages 383--390. Amer. Math. Soc., Providence, R.I., 1971.

\bibitem[Tod84]{MR776625}
Stevo Todor\v{c}evi\'{c}.
\newblock Trees and linearly ordered sets.
\newblock In {\em Handbook of set-theoretic topology}, pages 235--293. North-Holland, Amsterdam, 1984.

\bibitem[Ung13]{MR3072773}
Spencer Unger.
\newblock Aronszajn trees and the successors of a singular cardinal.
\newblock {\em Arch. Math. Logic}, 52(5-6):483--496, 2013.
\end{thebibliography}
\end{document}